\newtheorem{theorem}{Theorem}[section]
\newtheorem{lemma}[theorem]{Lemma}
\newtheorem{definition}[theorem]{Definition}
\newtheorem{corollary}[theorem]{Corollary}
\newtheorem{proposition}[theorem]{Proposition}
\newtheorem{lem-def}[theorem]{Lemma-Definition}
\DeclareRobustCommand\longtwoheadrightarrow
\newcommand{\hooklongrightarrow}{\lhook\joinrel\longrightarrow}
\renewenvironment{proof}{{\bfseries Proof.}}{\qed}
\newcommand{\Z}{\mathbb Z}
\newcommand{\Q}{\mathbb Q}
\def\op{\operatorname}
\def\al{\alpha}
\def\as#1{\renewcommand\arraystretch{#1}}
\def\bs{\vskip.5cm}
\def\diso{\lower.4ex\hbox{$\downarrow$}\raise.4ex\hbox{\mbox{\scriptsize
$\wr$}}}
\def\dm{\Delta}
\def\e{\medskip}
\def\ep#1{\exp(\Pi i#1)}
\def\ep{\epsilon}
\def\g{\Gamma}
\def\ga{\gamma}
\def\gchi{\g_{\deg(\chi)}}
\def\gen#1{\big\langle\, {#1} \,\big\rangle}
\def\gg{\mathcal{G}}
\def\ggalg{\mathcal{G}_v^{\op{al}}}
\def\ggm{\mathcal{G}_\mu}
\def\gm{\g_\mu}
\def\gp{\mathfrak{p}}
\def\gphi{\g_{\deg(\phi)}}
\def\ggv{\mathcal{G}_v}
\def\gv{\Gamma_v}
\def\hm{H_\mu}
\def\ic{\op{irc}}
\def\imp{\,\Longrightarrow\,}
\def\iso{\ \lower.3ex\hbox{\as{.08}$\begin{array}{c}\lra\\\mbox{\tiny $\sim\,$}\end{array}$}\ }
\def\kal{k^{\op{al}}}
\def\km{k_\mu}
\def\kpm{\op{KP}(\mu)}
\def\kv{k_v}
\def\kx{K[x]}
\def\kxchi{K[x]_{\deg(\chi)}}
\def\lrc{\op{lrc}}
\def\lg{l\raise.6ex\hbox to.2em{\hss.\hss}l}
\def\ll{\mathcal{L}}
\def\lra{\,\longrightarrow\,}
\def\m{{\mathfrak m}}
\def\md#1{\; \mbox{\rm(mod }{#1})}
\def\mmu{\mid_\mu}
\def\mn{\op{Min}}
\def\mua{\mu_\al}
\def\mx{\op{Max}}
\def\nlc{\op{nlc}}
\def\nmu{\nmid_\mu}
\def\oo{\mathcal{O}}
\def\orb{\hbox to  .3em{$\backslash$}\backslash}
\def\ord{\op{ord}}
\def\ppa{\mathcal{P}_{\alpha}}
\def\pset{\mathcal{P}}
\def\qg{\mathbb{Q}\g}
\def\qgm{\mathbb{Q}\gm}
\def\rr{\mathcal{R}}
\def\rrm{\mathcal{R}_\mu}
\def\sii{\,\Longleftrightarrow\,}
\def\smu{\sim_\mu}
\def\t{\theta}
\def\Min{\mathrm{Min}}
\newcounter{cs}
\newcommand{\casos}{\begin{itemize}}
\newcommand{\fcasos}{\end{itemize}\setcounter{cs}{1}}
\newfont{\tit}{cmr12 scaled \magstep3}
\title{Key polynomials over valued fields}
\subjclass[2010]{Primary 13A18; Secondary 13J10, 12J10}
\author{Enric Nart}
\address{Departament de Matem\`{a}tiques,
         Universitat Aut\`{o}noma de Barcelona,
         Edifici C, E-08193 Bellaterra, Barcelona, Catalonia, Spain}
\email{nart@mat.uab.cat}
\thanks{Partially supported by grant MTM2016-75980-P from the Spanish MEC}
\date{}
\keywords{graded algebra, key polynomial, MacLane chain, residual ideal, residual polynomial, valuation}
\begin{document}

\maketitle

\section*{Introduction}
Key polynomials over a valued field $(K,v)$ were introduced by S. MacLane as a tool to construct augmentations of discrete rank-one valuations on the polynomial ring $\kx$ \cite{mcla}. As an application, MacLane designed an algorithm to compute all extensions of the given valuation $v$ on $K$ to a finite field extension $L/K$ \cite{mclb}.

This work was generalized to arbitrary valuations by M. Vaqui\'e \cite{Vaq} and, independently, by F.J. Herrera, M.A. Olalla and M. Spivakovsky \cite{hos}. 

In the non-discrete case, \emph{limit augmented valuations} arise. The structure of their graded algebra, and the description of their sets of key polynomials are crucial questions, linked with the study of the defect of a valuation in a finite extension, and the local uniformization problem \cite{hmos,mahboub,SS,Vaq2}.      

In this paper, we fix an arbitrary valuation $\mu$ on $\kx$, and we determine the structure of its graded algebra $\ggm$, and describe its set of key polynomials $\kpm$, in terms of a key polynomial of minimal degree. We also characterize valuations not admitting key polynomials (Theorem \ref{kpempty}).

Some of the results of the paper can be found in \cite{Vaq}, but only for augmented valuations. Also, in \cite{PP} some partial results are obtained for residually transcendental valuations, by using the fact that these valuations are determined by a minimal pair. 

In our approach, we do not make any assumption on $\mu$, and we derive our results in a pure abstract form, from the mere existence of key polynomials.

In section 2 we study general properties of key polynomials, while in section 3, we study specific properties of key polynomials of minimal degree. In section 4, we determine the structure of the graded algebra. 

Section 5 is devoted to the introduction of residual polynomial operators, based on old ideas of Ore and MacLane \cite{ore,mcla}. These operators yield a malleable and elegant tool, able to replace the onerous ``lifting" techniques in the context of valuations constructed from minimal pairs.

In section 6 we describe the set of key polynomials and we prove that a certain \emph{residual ideal operator} sets a bijection
$$
\kpm/\!\smu\ \lra\ \mx(\Delta)
$$
between the set of $\mu$-equivalence classes of key polynomials, and the maximal spectrum of the subring $\Delta\subset \ggm$, piece of degree zero in the graded algebra. This result is inspired in \cite{ResidualIdeals}, where it was proved for discrete rank-one valuations.

Finally, in section 7, we single out a key polynomial of minimal degree for augmented and limit augmented valuations. In this way, the structure of the graded algebra and the set of key polynomials for these valuations can be obtained from the results of this paper.

\section{Graded algebra of a valuation on a polynomial ring}
\subsection{Graded algebra of a valuation}
Let $\g$ be an ordered abelian group. Consider
$$
w\colon L\lra \g\cup\{\infty\}
$$
a valuation on a field $L$, and denote\e

\begin{itemize}
\item $\m_w\subset \oo_w\subset L$, the maximal ideal and valuation ring of $w$. \e
\item $k_w=\oo_w/\m_w$, the residue class field of $w$.\e
\item $\g_w=w(L^*)$, the group of values of $w$.
\end{itemize}\e

To any subring $A\subset L$ we may associate a graded algebra as follows.

For every $\alpha \in \g_w$, consider the $\oo_w$-submodules:
$$
\ppa=\{a\in A\mid w(a)\ge \alpha\}\supset
\ppa^+=\{a\in A\mid w(a)> \alpha\},
$$ 
leading to the graded algebra
$$
\operatorname{gr}_w(A)=\bigoplus\nolimits_{\alpha\in\g_w}\ppa/\ppa^+.
$$
The product of homogeneous elements is defined in an obvious way:
$$
\left(a+\ppa^+\right)\left(b+\mathcal{P}_{\beta}^+\right)=ab+\mathcal{P}_{\alpha+\beta}^+.
$$

If the classes  $a+\ppa^+$, $b+\mathcal{P}_{\beta}^+$ are different from zero, then $w(a)=\alpha$, $w(b)=\beta$. Hence, $w(ab)=\alpha+\beta$, so that $ab+\mathcal{P}_{\alpha+\beta}^+$ is different from zero too. 

Thus, $\operatorname{gr}_w(A)$ is an integral domain.\e

Consider the ``initial term" mapping \;$H_w\colon A\to \operatorname{gr}_w(A)$,\; given by
$$
H_w(0)=0,\qquad H_w(a)=a+\pset^+_{w(a)},\ \mbox{ for }a\in A,\ a\ne0.
$$

Note that $H_w(a)\ne0$ if $a\ne0$. For all $a,b\in A$ we have:
\begin{equation}\label{Hmu}
\as{1.3}
\begin{array}{l}
 H_w(ab)=H_w(a)H_w(b), \\
 H_w(a+b)=H_w(a)+H_w(b), \ \mbox{ if }w(a)=w(b)=w(a+b).
\end{array}
\end{equation}

\begin{definition}
Two elements $a,b\in A$ are said to be \emph{$w$-equivalent} if 
$H_w(a)=H_w(b)$.

In this case,  we write $a\sim_w b$.
This is equivalent to $w(a-b)>w(b)$. \e

We say that $a$ is \emph{$w$-divisible} by $b$ if $H_w(a)$ is divisible by $H_w(b)$ in $\operatorname{gr}_w(A)$. 

In this case,  we write $b\mid_w a$.
This is equivalent to $a\sim_w bc$, \ for some $c\in A$.\e
\end{definition}

\subsection{Valuations on polynomial rings. General setting}

Throughout the paper, we fix a field $K$ and a valuation 
$$\mu\colon K(x) \lra \gm \cup \{\infty\},$$
on the field $K(x)$ of rational functions in one indeterminate $x$. 

We do not make any assumption on the rank of $\mu$.\e

We denote by  $v=\mu_{|_K}$ the valuation on $K$ obtained by the restriction of $\mu$. The group of values of $v$ is a subgroup of $\gm$:
$$
\gv=v(K^*)=\mu(K^*)\subset \gm.
$$


For each one of the two valuations $v$, $\mu$, we consider a different graded algebra:
$$
\ggv:=\operatorname{gr}_v(K),\qquad \ggm:=\operatorname{gr}_\mu(\kx).
$$

In the algebra $\ggv$, every non-zero homogeneous element is a unit: $$H_v(a)^{-1}=H_v(a^{-1}),\qquad \forall a\in K^*.$$

The subring of homogeneous elements of degree zero of $\ggv$ is $\kv$, so that $\ggv$ has a natural structure of $\kv$-algebra.\e

We have a natural embedding of graded algebras,
$$
\ggv\hookrightarrow \ggm,\qquad a+\pset^+_\alpha(v)\,\mapsto\, a+\pset^+_\alpha(\mu),\quad \forall\alpha\in\gv,\ \forall a\in \pset_\alpha(v). 
$$

The subring of $\ggm$ determined by the piece of degree zero is denoted 
$$
\dm=\dm_\mu=\pset_0(\mu)/\pset_0^+(\mu).
$$
Since $\oo_v\subset\pset_0=\kx\cap \oo_\mu$, and $\m_v=\pset_0^+\cap \oo_v\subset \pset_0^+=\kx\cap \m_\mu$, there are canonical injective ring homomorphisms: 
$$\kv\hooklongrightarrow\dm\hooklongrightarrow \km.$$
In particular, $\dm$ and $\ggm$ are equipped with a canonical structure of $\kv$-algebra. \e

The aim of the paper is to analyze the structure of the graded algebra $\ggm$ and show that most of the properties of the extension $\mu/v$ are reflected in algebraic properties of the extension $\ggm/\ggv$.

For instance, an essential role is played by the \emph{residual ideal operator}
\begin{equation}\label{resIdeal}
\rr=\rrm\colon \kx\lra I(\dm),\qquad g\mapsto \left(\hm(g)\ggm\right)\cap \dm,
\end{equation}
where $I(\dm)$ is the set of ideals in $\dm$.

In sections \ref{secR} and \ref{secKPuf}, we shall study in more detail this operator $\rr$, which translates questions about the action of  $\mu$ on $\kx$ into ideal-theoretic problems in the ring $\dm$. 

\subsection*{Commensurability}
The \emph{divisible hull} of an ordered abelian group $\g$ is
$$
\qg:=\g\otimes_\Z\Q.
$$
This $\Q$-vector space inherits a natural structure of ordered abelian group, with the same rank as $\g$.\e

The \emph{rational rank} of $\g$ is defined as $\op{rr}(\g)=\dim_\Q(\qg)$.\e

Since $\g$ has no torsion, it admits an order-preserving embedding $\g\hookrightarrow\qg$ into its divisible hull. For every $\ga\in\qg$ there exists a minimal positive integer $e$ such that $e\ga\in\g$.\e

We say that our extension $\mu/v$ is \emph{commensurable} if $\qg_v=\qg_\mu$, or in other words, if $\op{rr}(\gm/\gv)=0$. This is equivalent to $\gm/\gv$ being a torsion group.\e

Actually, $\op{rr}(\gm/\gv)$ takes only the values $0$ or $1$, as the following well-known inequality shows \cite[Thm. 3.4.3]{valuedfield}:
\begin{equation}\label{ftalineq}
\op{tr.deg}(\km/\kv)+\op{rr}(\gm/\gv)\le \op{tr.deg}(K(x)/K)=1. 
\end{equation}

Finally, we fix some notation to be used throughout the paper.\e

\noindent{\bf Notation. }For any positive integer $m$ we denote
$$
\kx_m=\{a\in\kx\mid \deg(a)<m\}.
$$

For any polynomials $f,\chi\in\kx$, with $\deg(\chi)>0$, we denote the canonical $\chi$-expansion of $f$ by: 
$$
f=\sum\nolimits_{0\le s}f_s\chi^s, 
$$
being implicitly assumed that the coefficients $f_s\in \kx$ have $\deg(f_s)<\deg(\chi)$.

\section{Key polynomials. Generic properties}\label{secKP}
In this section, we introduce the concept of key polynomial for $\mu$, and we study some generic properties of key polynomials. 

In section \ref{secKPmindeg}, we shall see that, if $\mu$ admits key polynomials at all, then the structure of $\ggm$ is determined by any key polynomial of minimal degree.

\begin{definition}\label{mu}Let $\chi\in \kx$.



We say that $\chi$ is $\mu$-irreducible if $\hm(\chi)\ggm$ is a non-zero prime ideal. 

We say that $\chi$ is $\mu$-minimal if $\chi\nmid_\mu f$ for any non-zero $f\in \kx$ with $\deg f<\deg \chi$.
\end{definition}


The property of $\mu$-minimality admits a relevant characterization, given in Proposition \ref{minimal0} below.

\begin{lemma}\label{minimal1}
Let $f,\chi\in \kx$. Consider a $\chi$-expansion of $f\in \kx$ as follows:
$$f=\sum\nolimits_{0\le s}a_s\chi^s, \qquad  a_s\in \kx,\quad \chi\nmid_\mu a_s,\ \forall s.$$ 
Then, $\mu(f)=\Min\{\mu(a_s\chi^s)\mid 0\le s\}$. 
\end{lemma}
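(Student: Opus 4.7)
The plan is to prove this by contradiction. The inequality $\mu(f) \ge \delta := \Min\{\mu(a_s\chi^s) \mid 0 \le s\}$ is automatic from the ultrametric property applied to $f = \sum_{s\ge 0} a_s\chi^s$, so the content is to rule out $\mu(f) > \delta$. After disposing of the trivial case $f = 0$ (where both sides are $\infty$), I would assume $\mu(f) > \delta$ and aim to derive $\chi \mid_\mu a_{s_0}$ for a specific index $s_0$, contradicting the hypothesis $\chi \nmid_\mu a_s$ for all $s$.

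The key step is to let $S := \{s \ge 0 \mid \mu(a_s\chi^s) = \delta\}$, which is nonempty and consists only of indices with $a_s\ne 0$, and then note that the partial sum $\sum_{s \in S} a_s\chi^s$ differs from $f$ by the remaining terms, each of $\mu$-value strictly greater than $\delta$. Under the contradiction hypothesis $\mu(f)>\delta$, the partial sum therefore also has $\mu$-value strictly greater than $\delta$. Passing to the degree-$\delta$ piece $\pset_\delta/\pset_\delta^+$ of $\ggm$ and applying the multiplicativity of $\hm$ recorded in \eqref{Hmu}, this translates into the graded identity
$$\sum_{s \in S} \hm(a_s)\,\hm(\chi)^s \,=\, 0 \quad \text{in } \ggm,$$
honest because each $a_s\chi^s$ with $s\in S$ lies in $\pset_\delta$ exactly, so its class modulo $\pset_\delta^+$ is $\hm(a_s)\hm(\chi)^s$.

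Setting $s_0 := \min S$ and using both the fact that $\ggm$ is an integral domain (established right after the definition of $\gr_w(A)$) and that $\hm(\chi)\ne 0$ since $\chi\ne 0$, I would cancel the common factor $\hm(\chi)^{s_0}$ and isolate the $s_0$ term to express $\hm(a_{s_0})$ as $\hm(\chi)$ times an element of $\ggm$; this gives $\chi \mid_\mu a_{s_0}$, the required contradiction. The edge case is $S = \{s_0\}$: then the cancellation forces $\hm(a_{s_0}) = 0$, hence $a_{s_0} = 0$, which already contradicts $s_0 \in S$. I do not foresee any real obstacle here; the only conceptual ingredient is realizing that the ultrametric cancellation in question is most cleanly captured as a monomial relation inside the integral domain $\ggm$, where powers of $\hm(\chi)$ can be freely cancelled.
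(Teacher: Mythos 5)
Your argument is correct, but it takes a different route from the paper's. The paper proves the lemma by the minimal two-term decomposition $f=a_0+\chi q$: the ultrametric inequality plus the observation that a strict inequality would force $a_0\smu -\chi q$ (hence $\chi\mid_\mu a_0$) gives the equality at the first level, and a recursion on $q$ finishes the proof; no structural property of $\ggm$ is needed beyond the definitions of $\smu$ and $\mid_\mu$. You instead collect all monomials of minimal value $\delta$ at once, translate the hypothesis $\mu(f)>\delta$ into the homogeneous relation $\sum_{s\in S}\hm(a_s)\hm(\chi)^s=0$ in the degree-$\delta$ piece, and cancel $\hm(\chi)^{s_0}$ using that $\ggm$ is an integral domain, arriving at $\chi\mid_\mu a_{s_0}$. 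This is a genuine one-shot graded-algebra argument: it costs you the (already established) domain property of $\ggm$ and the ability to cancel powers of the nonzero homogeneous element $\hm(\chi)$, which the paper's peeling-off recursion never needs, but it buys a more global and transparent proof, and in fact it is exactly the device the paper itself deploys later, in the proof of Theorem \ref{phimindeg}, where the vanishing of a sum $\sum_{s\in I}\hm(f_s)\hm(\phi)^s$ is the pivotal step. One small remark: your dismissal of ``the trivial case $f=0$'' presupposes that all $a_s$ vanish there, which is not a hypothesis; but this is harmless, since your contradiction argument only uses $\mu(f)>\delta$ and hence also covers $\mu(f)=\infty$ with some $a_s\ne0$, showing that case cannot occur. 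Likewise, your use of the hypothesis only at the index $s_0\in S$ (where $a_{s_0}\ne0$) sidesteps the paper's mild abuse concerning zero coefficients.
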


\begin{proof}
Write $f=a_0+\chi q$ with $q\in \kx$. Then, $\mu(f)\geq\Min\{\mu(a_0),\mu(\chi q)\}$.

A strict inequality would imply $a_0\smu-\chi q$, against our assumption. Hence,  equality holds, and the result follows from a recurrent argument.
\end{proof}\bs

\begin{proposition}\label{minimal0}
Let $\chi\in \kx$ be a non-constant polynomial.  
The following conditions are equivalent.
\begin{enumerate}
\item $\chi$ is $\mu$-minimal
\item For any $f\in \kx$, with $\chi$-expansion $f=\sum\nolimits_{0\le s}f_s\chi^s$, we have 
$$\mu(f)=\Min\{\mu\left(f_s\chi^s\right)\mid 0\le s\}.$$ 
\item For any non-zero $f\in \kx$, with $\chi$-expansion $f=\sum\nolimits_{0\le s}f_s\chi^s$, we have
$$\chi\nmid_\mu f\sii\mu(f)=\mu(f_0).$$
\end{enumerate}
\end{proposition}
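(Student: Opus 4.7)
The plan is to establish the cycle $(1) \Rightarrow (2) \Rightarrow (3) \Rightarrow (1)$, which is the natural route: Lemma \ref{minimal1} already supplies most of the content of the first implication, and condition (1) follows almost for free from (3).

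For $(1) \Rightarrow (2)$, I would invoke Lemma \ref{minimal1} directly. Given a $\chi$-expansion $f = \sum_{s\geq 0} f_s\chi^s$, every nonzero coefficient $f_s$ has $\deg(f_s) < \deg(\chi)$, so $\mu$-minimality of $\chi$ yields $\chi \nmu f_s$. The zero coefficients contribute $\infty$ to the minimum and can be discarded. The hypothesis of the lemma is therefore met, and its conclusion is precisely (2).

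The substantive step is $(2) \Rightarrow (3)$, where both directions of the equivalence must be verified. For $\mu(f) = \mu(f_0) \Rightarrow \chi \nmu f$, I would argue by contradiction: if $\chi \mmu f$, pick $q \in \kx$ with $f \smu \chi q$, so $\mu(f - \chi q) > \mu(f)$. The key observation is that the $\chi$-expansion of $f - \chi q$ still has constant coefficient $f_0$ (multiplication of $q$ by $\chi$ shifts indices upward), while the other coefficients $f_s - q_{s-1}$ still lie in $\kx_{\deg(\chi)}$. Applying (2) to $f - \chi q$ then forces $\mu(f - \chi q) \leq \mu(f_0) = \mu(f)$, a contradiction. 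For the reverse implication, assume $\chi \nmu f$. By (2) we have $\mu(f) \leq \mu(f_0)$; if the inequality were strict, then writing $f = f_0 + \chi h$ with $h = \sum_{s\geq 1} f_s\chi^{s-1}$ would give $\mu(f_0) > \mu(f) = \mu(\chi h)$, which by the additivity rule \eqref{Hmu} forces $\chi h \smu f$ and hence $\chi \mmu f$, contradicting our assumption. Thus $\mu(f) = \mu(f_0)$.

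Finally, $(3) \Rightarrow (1)$ is essentially a tautology: for a nonzero $f \in \kx$ with $\deg(f) < \deg(\chi)$, the $\chi$-expansion is simply $f_0 = f$, so $\mu(f) = \mu(f_0)$ trivially and (3) yields $\chi \nmu f$. I expect the only real subtlety throughout the argument to be the bookkeeping on the $\chi$-expansions of auxiliary polynomials such as $f - \chi q$ — verifying that their shifted coefficients remain in $\kx_{\deg(\chi)}$ — but this is automatic from the uniqueness of the division-with-remainder construction of the $\chi$-expansion.
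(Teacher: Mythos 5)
Your proposal is correct and follows essentially the same route as the paper: $(1)\Rightarrow(2)$ via Lemma \ref{minimal1}, $(2)\Rightarrow(3)$ by comparing $\mu(f)$ with $\mu(f_0)$ through the $\chi$-expansions of $f$ and of $f-\chi q$ (whose $0$-th coefficient is still $f_0$), and $(3)\Rightarrow(1)$ from the trivial expansion of polynomials of degree less than $\deg(\chi)$. The only difference is cosmetic — you phrase both directions of $(2)\Rightarrow(3)$ as arguments by contradiction where the paper argues directly — so no further changes are needed.
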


\begin{proof}  
The implication (1)$\imp$(2) follows from Lemma \ref{minimal1}. In fact, if $\chi$ is $\mu$-minimal, then $\chi\nmid_\mu f_s$ for all $s$, because $\deg(f_s)<\deg(\chi)$.\e

Let us deduce (3) from (2). Take a non-zero $f\in \kx$ and write $f=f_0+\chi q$ with $q\in \kx$. By item (2), we have $\mu(f) \le \mu(f_0)$.

If $\mu(f)<\mu(f_0)$, then $f\smu \chi q$, so $\chi\mmu f$. Conversely, if $f \smu \chi g$ for some $g\in \kx$, 
then $\mu(f-\chi g) > \mu(f)$. Since the $\chi$-expansion of $f-\chi g$ has the same $0$-th coefficient $f_0$, condition $(2)$ shows that $\mu(f) <\mu(f-\chi g)\leq\mu(f_0)$. \e

Finally, (3) implies (1). If $\deg(f)<\deg(\chi)$, then the $\chi$-expansion of $f$ is $f=f_0$. By item $(3)$, $\chi \nmid_{\mu}f$. 
\end{proof}\bs

The property of $\mu$-minimality is not stable under $\mu$-equivalence. For instance, if $\chi$ is $\mu$-minimal and $\mu(\chi)>0$, then $\chi+\chi^2\smu \chi$ and $\chi+\chi^2$ is not $\mu$-minimal. However, for $\mu$-equivalent polynomials of the same degree, $\mu$-minimality is clearly preserved. 

\begin{definition} A \emph{key polynomial} for $\mu$ is a monic polynomial in $\kx$ which is $\mu$-minimal and $\mu$-irreducible. 

The set of key polynomials for $\mu$ will be denoted by $\kpm$.
\end{definition}

\begin{lemma}\label{mid=sim}
Let $\chi\in\kpm$, and let $f\in \kx$ a monic polynomial such that $\chi\mmu f$ and $\deg(f)=\deg(\chi)$. Then, $\chi\smu f$ and $f$ is a key polynomial for $\mu$ too.
\end{lemma}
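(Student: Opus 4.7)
The plan is to exploit the $\chi$-expansion of $f$. Since both $f$ and $\chi$ are monic of the same degree $m:=\deg(\chi)$, I would write $f = \chi + r$ with $r \in \kxs{m}$. The content of the first assertion $\chi \smu f$ is precisely $\mu(r) > \mu(\chi)$, so everything reduces to establishing this strict inequality.

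First I would invoke Proposition \ref{minimal0}(2), applicable because $\chi$ is $\mu$-minimal, to obtain $\mu(f) = \Min\{\mu(r), \mu(\chi)\}$. Then I would argue by contradiction: suppose $\mu(r) \le \mu(\chi)$. Then $\mu(f) = \mu(r)$, and the initial form is $\hm(f) = \hm(r)$ when $\mu(r) < \mu(\chi)$, or $\hm(f) = \hm(r) + \hm(\chi)$ when $\mu(r) = \mu(\chi)$. In either subcase, the hypothesis $\chi \mmu f$ translates into $\hm(\chi)$ dividing $\hm(r)$ in $\ggm$, i.e.\ $\chi \mmu r$. But $r$ is non-zero (otherwise $\mu(r) = \infty > \mu(\chi)$) and $\deg(r) < \deg(\chi)$, contradicting the $\mu$-minimality of $\chi$. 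Therefore $\mu(r) > \mu(\chi)$ and $\chi \smu f$.

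For the second assertion, both defining conditions of a key polynomial transfer along the equivalence $\chi \smu f$. The $\mu$-irreducibility of $f$ is immediate from $\hm(f)\ggm = \hm(\chi)\ggm$. For the $\mu$-minimality of $f$: given a non-zero $g \in \kx$ with $\deg(g) < \deg(f)$, a hypothetical relation $f \mmu g$ would, via $\hm(f) = \hm(\chi)$, yield $\chi \mmu g$, contradicting the $\mu$-minimality of $\chi$. Since $f$ is monic by hypothesis, we conclude $f \in \kpm$.

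The only delicate point I anticipate is the borderline subcase $\mu(r) = \mu(\chi)$, where one must verify that $\hm(\chi) + \hm(r)$ is truly the initial form $\hm(f)$ rather than zero. This is however forced by Proposition \ref{minimal0}(2), which guarantees $\mu(f) = \mu(\chi)$ exactly, so no cancellation occurs and the divisibility step $\hm(\chi)\mid \hm(r)$ is legitimate. Apart from this subtlety, the argument is a routine transfer of properties along $\chi \smu f$.
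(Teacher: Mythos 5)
Your proof is correct and takes essentially the same route as the paper: expand $f=\chi+r$ with $\deg(r)<\deg(\chi)$, use Proposition \ref{minimal0} together with the $\mu$-minimality of $\chi$ to force $\mu(r)>\mu(\chi)$, hence $\chi\smu f$, and then transfer $\mu$-irreducibility and $\mu$-minimality along this equivalence. The only cosmetic difference is that where the paper invokes item (3) of Proposition \ref{minimal0} directly to get $\mu(f)<\mu(r)$, you re-derive that step by a contradiction argument on initial forms, correctly using item (2) to rule out cancellation in the borderline case $\mu(r)=\mu(\chi)$.
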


\begin{proof}
The $\chi$-expansion of $f$ is of the form $f=f_0+\chi$, with $\deg(f_0)<\deg(\chi)$. Items (2) and (3) of Proposition \ref{minimal0} show that $\mu(f)<\mu(f_0)$. Hence, $\hm(f)=\hm(\chi)$, and $f$ is $\mu$-irreducible. Since $\deg(f)=\deg(\chi)$, it is $\mu$-minimal too.
\end{proof}\e

\begin{lemma}\label{ab}
Let $\chi\in\kpm$.
\begin{enumerate}
\item For $a,b\in \kxchi$, let $ab=c+d\chi$ be the $\chi$-expansion of $ab$. Then,
$$\mu(ab)=\mu(c)\le\mu(d\chi).$$
\item $\chi$ is  irreducible in $\kx$.
\end{enumerate}
\end{lemma}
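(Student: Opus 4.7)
The plan is to derive both statements directly from the defining properties of a key polynomial via Proposition \ref{minimal0}. The key observation is that $\mu$-minimality of $\chi$ prevents $\chi$ from $\mu$-dividing any nonzero polynomial of degree strictly less than $\deg(\chi)$, while $\mu$-irreducibility means that $H_\mu(\chi)\ggm$ is a prime ideal, so the divisibility relation $\mid_\mu$ behaves like a prime divisibility with respect to products.

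For item (1), I would first dispose of the trivial case $a=0$ or $b=0$ (where everything is $\infty$). Otherwise, both $a$ and $b$ are nonzero elements of $\kxchi$, so $\mu$-minimality immediately gives $\chi\nmu a$ and $\chi\nmu b$. Since $\chi$ is $\mu$-irreducible, the ideal $H_\mu(\chi)\ggm$ is prime, hence $\chi\nmu ab$. Now apply Proposition \ref{minimal0}: part (2) gives $\mu(ab)=\Min\{\mu(c),\mu(d\chi)\}$ using the $\chi$-expansion $ab=c+d\chi$, and part (3), combined with $\chi\nmu ab$, forces $\mu(ab)=\mu(c)$. These two equalities together yield $\mu(c)\le\mu(d\chi)$, as desired.

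For item (2), I would argue by contradiction. Suppose $\chi=fg$ is a nontrivial factorization in $\kx$, that is, with both $f$ and $g$ non-constant. Then $\deg(f),\deg(g)\ge 1$ and $\deg(f)+\deg(g)=\deg(\chi)$, forcing $\deg(f),\deg(g)<\deg(\chi)$, so $f,g\in\kxchi$ are nonzero. From $\chi=fg$ we get $H_\mu(\chi)=H_\mu(f)H_\mu(g)$, so $H_\mu(\chi)\ggm$ (being prime) must contain $H_\mu(f)$ or $H_\mu(g)$. This means $\chi\mmu f$ or $\chi\mmu g$, contradicting $\mu$-minimality of $\chi$ since both polynomials have degree smaller than $\deg(\chi)$.

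No step here looks like a real obstacle: the proof is essentially a bookkeeping exercise that packages the definitions of $\mu$-minimality and $\mu$-irreducibility together with Proposition \ref{minimal0}. The only mild care needed is checking the degenerate cases (zero inputs in (1), constant factors in (2)) so that the application of Proposition \ref{minimal0} is legitimate.
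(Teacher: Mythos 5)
Your proof is correct and follows essentially the same route as the paper: $\mu$-minimality gives $\chi\nmid_\mu a,b$, primality of $\hm(\chi)\ggm$ gives $\chi\nmid_\mu ab$, and Proposition \ref{minimal0} then yields item (1). For item (2) the paper simply observes that $\chi=ab$ would contradict $\chi\nmid_\mu ab$, while you rerun the primality-plus-minimality argument on a hypothetical factorization; this is only a cosmetic difference, and your extra care with the degenerate cases ($a$ or $b$ zero, constant factors) is harmless.
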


\begin{proof}
For any $a,b\in \kxchi$, we have $\chi\nmid_\mu a$,  $\chi\nmid_\mu b$ by the $\mu$-minimality of $\chi$. Hence,  $\chi\nmid_\mu ab$ by the $\mu$-irreducibility of $\chi$. Thus, (1) follows from Proposition \ref{minimal0}.

In particular, the equality $\chi=ab$ is impossible, so that $\chi$ is irreducible.
\end{proof}

\subsection*{Minimal expression of $\hm(f)$ in terms of $\chi$-expansions}

\begin{definition}\label{order}
For $\chi\in\kpm$ and a non-zero $f\in \kx$, we let $s_\chi(f)$ be the largest integer $s$ such that $\chi^s\mid_\mu f$. 

Namely, $s_\chi(f)$ is the order with which the prime $\hm(\chi)$ divides $\hm(f)$ in $\ggm$. 

Accordingly, by setting \,$s_\chi(0):=\infty$, we get
\begin{equation}\label{multiplicative}
s_\chi(fg)=s_\chi(f)+s_\chi(g),\ \mbox{ for all }f,g\in \kx.
\end{equation}
\end{definition}

\begin{lemma}\label{sphi}
Let $f\in \kx$ with $\chi$-expansion $f=\sum_{0\le s}f_s\chi^s$. Denote
$$
I_\chi(f)=\left\{s\in\Z_{\ge0}\mid \mu(f_s\chi^s)=\mu(f)\right\}.
$$
Then, $f\smu \sum\nolimits_{s\in I_\chi(f)}f_s\chi^s$,
and\, $s_\chi(f)=\Min(I_\chi(f))$.
\end{lemma}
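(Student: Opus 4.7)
The plan is to derive both assertions from Proposition \ref{minimal0}(2), which, applied to the $\chi$-expansion of $f$, yields $\mu(f) = \Min\{\mu(f_s\chi^s) \mid 0 \le s\}$; in particular $I_\chi(f)$ is nonempty.

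For the $\mu$-equivalence, I will split the $\chi$-expansion as $f = A + B$, with $A = \sum_{s \in I_\chi(f)} f_s\chi^s$ and $B = \sum_{s \notin I_\chi(f)} f_s\chi^s$. By definition of $I_\chi(f)$ each summand of $B$ has $\mu$-value strictly exceeding $\mu(f)$, so $\mu(B) > \mu(f)$; combined with $\mu(A+B)=\mu(f)$ this forces $\mu(A) = \mu(f)$, and hence $\mu(f - A) = \mu(B) > \mu(A)$, which is precisely $f \smu A$.

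For the identity $s_\chi(f) = \Min(I_\chi(f))$, set $s_0 = \Min(I_\chi(f))$. Factoring $\chi^{s_0}$ out of $A$ and using $f \smu A$ gives $\chi^{s_0}\mmu f$, so $s_\chi(f) \ge s_0$. For the reverse inequality I will argue by contradiction: suppose $\chi^{s_0+1}\mmu f$, so $f \smu \chi^{s_0+1}h$ for some $h \in \kx$. Writing $h = \sum_t h_t\chi^t$ with $\deg(h_t)<\deg(\chi)$, the product $\chi^{s_0+1}h = \sum_t h_t\chi^{s_0+1+t}$ is itself the $\chi$-expansion of $\chi^{s_0+1}h$, by the uniqueness of the $\chi$-expansion. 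Consequently the $\chi$-expansion of $g := f - \chi^{s_0+1}h$ coincides with that of $f$ at every position $s \le s_0$. Since $f \smu \chi^{s_0+1}h$ entails $\mu(g) > \mu(f)$, applying Proposition \ref{minimal0}(2) to $g$ yields $\mu(f_{s_0}\chi^{s_0}) \ge \mu(g) > \mu(f)$, contradicting $s_0 \in I_\chi(f)$.

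The routine steps are the first assertion and the easy direction $s_\chi(f) \ge s_0$. The main obstacle is the reverse inequality $s_\chi(f) \le s_0$, whose resolution hinges on combining the uniqueness of the $\chi$-expansion with the $\mu$-minimality of $\chi$ (via Proposition \ref{minimal0}(2)) to extract the needed contradiction.
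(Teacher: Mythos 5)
Your proof is correct and follows essentially the same route as the paper: both establish $f\smu\sum_{s\in I_\chi(f)}f_s\chi^s$ by splitting off the terms of $\mu$-value exceeding $\mu(f)$, and both pin down the exponent $s_0=\Min(I_\chi(f))$ using the $\mu$-minimality of $\chi$ through Proposition \ref{minimal0}. The only cosmetic difference is the last step: the paper factors $g=\chi^{s_0}(f_{s_0}+\chi h)$ and applies item (3) of Proposition \ref{minimal0} to get $\chi\nmid_\mu (f_{s_0}+\chi h)$ directly, whereas you exclude $\chi^{s_0+1}\mid_\mu f$ by contradiction via item (2) applied to $f-\chi^{s_0+1}h$; both are valid.
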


\begin{proof}
Let $g=\sum\nolimits_{s\in I_\chi(f)}f_s\chi^s$.
By construction, $f-g=\sum_{s\not\in I}f_s\chi^s$ has $\mu$-value $\mu(f-g)>\mu(f)$. This proves $f\smu g$. In particular, $s_\chi(f)=s_\chi(g)$. 

If $s_0=\Min(I_\chi(f))$, we may write,
$$
g=\chi^{s_0}\left(f_{s_0}+\chi h\right),
$$
for some $h\in \kx$. By construction, $\mu(f_{s_0})=\mu(f_{s_0}+\chi h)=\mu(g/\chi^{s_0})$.

By item (3) of Proposition \ref{minimal0}, $\chi\nmid_\mu\left(f_{s_0}+\chi h\right)$. Therefore,  
$s_\chi(g)=s_0$.
\end{proof}\e

\begin{definition}
For any $f\in\kx$ we denote $s'_\chi(f)=\mx(I_\chi(f))$.  

Denote for simpliciy $s=s_\chi(f)$, $s'=s'_\chi(f)$. The homogeneous elements 
$$\ic(f):=\hm(f_s),\qquad \lrc(f):=\hm(f_{s'}) 
$$
are the \emph{initial residual coefficient} and \emph{leading residual coefficient} of $f$, respectively.
\end{definition}\e

The next lemma shows that $s'_\chi(f)$ is an invariant of the $\mu$-equivalence class of $f$.

\begin{lemma}\label{sprime}
If $f,g\in \kx$ satisfy $f\smu g$, then $I_\chi(f)=I_\chi(g)$, and $f_s\smu g_s$ for all $s\in I_\chi(f)$.
In particular, $\ic(f)=\ic(g)$ and $\lrc(f)=\lrc(g)$.
\end{lemma}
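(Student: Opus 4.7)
The plan is to unpack what $f \sim_\mu g$ means and push it through $\chi$-expansions using the $\mu$-minimality of $\chi$ (Proposition \ref{minimal0}(2)).

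First I would write $f = \sum_s f_s \chi^s$ and $g = \sum_s g_s \chi^s$ in their $\chi$-expansions, so that $f - g = \sum_s (f_s - g_s)\chi^s$ is also a $\chi$-expansion (since each $f_s - g_s$ still has degree less than $\deg\chi$). The hypothesis $f \sim_\mu g$ yields $\mu(f) = \mu(g)$ and $\mu(f-g) > \mu(f)$. Since $\chi \in \kpm$ is $\mu$-minimal, Proposition \ref{minimal0}(2) applied to $f-g$ gives
\[
\mu(f-g) = \Min\{\mu((f_s-g_s)\chi^s) \mid 0 \le s\},
\]
so $\mu((f_s-g_s)\chi^s) > \mu(f)$ for every $s$.

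Next I would show $I_\chi(f) \subseteq I_\chi(g)$. Fix $s \in I_\chi(f)$, so $\mu(f_s\chi^s) = \mu(f)$. Combined with $\mu((f_s - g_s)\chi^s) > \mu(f) = \mu(f_s\chi^s)$, dividing by $\mu(\chi^s)$ gives $\mu(f_s - g_s) > \mu(f_s)$, i.e.\ $f_s \sim_\mu g_s$. In particular $\mu(g_s) = \mu(f_s)$, hence $\mu(g_s\chi^s) = \mu(f_s\chi^s) = \mu(f) = \mu(g)$, so $s \in I_\chi(g)$. By symmetry $I_\chi(f) = I_\chi(g)$, and the argument also produced $f_s \sim_\mu g_s$ for every $s \in I_\chi(f)$.

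Finally, the statements about $\ic$ and $\lrc$ follow immediately: by Lemma \ref{sphi} we have $s_\chi(f) = \Min(I_\chi(f)) = \Min(I_\chi(g)) = s_\chi(g)$, and likewise $s'_\chi(f) = s'_\chi(g)$; calling these common values $s$ and $s'$, the $\mu$-equivalences $f_s \sim_\mu g_s$ and $f_{s'} \sim_\mu g_{s'}$ translate into $\hm(f_s) = \hm(g_s)$ and $\hm(f_{s'}) = \hm(g_{s'})$, i.e.\ $\ic(f) = \ic(g)$ and $\lrc(f) = \lrc(g)$. There is no real obstacle here; the only point requiring care is checking that $f-g$ really is displayed in its own $\chi$-expansion so that Proposition \ref{minimal0}(2) applies term-by-term.
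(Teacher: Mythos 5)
Your proof is correct and follows essentially the same route as the paper: both apply Proposition \ref{minimal0}(2) to the $\chi$-expansion of $f-g$ to get $\mu\bigl((f_s-g_s)\chi^s\bigr)>\mu(f)$ for all $s$, and then compare indices in $I_\chi(f)$ and $I_\chi(g)$ (you derive $f_s\smu g_s$ first and conclude $s\in I_\chi(g)$, while the paper rules out a mismatch of the index sets by contradiction — the same computation). The only cosmetic quibble is the phrase ``dividing by $\mu(\chi^s)$'', which should read as subtracting $s\mu(\chi)$ from both sides of the inequality.
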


\begin{proof}
Consider the $\chi$-expansions $f=\sum_{0\le s}f_s\chi^s$, $g=\sum_{0\le s}g_s\chi^s$. 

If $f\smu g$, then for any $s\ge0$ we have
\begin{equation}\label{smus}
\mu(f)<\mu(f-g)\le \mu\left((f_s-g_s)\chi^s\right).
\end{equation}

The condition $s\in I_\chi(f)$, $s\not\in I_\chi(g)$ (or viceversa) contradicts (\ref{smus}). In fact, $$\mu\left((f_s-g_s)\chi^s\right)=\mu(f),$$
because $\mu\left(f_s\chi^s\right)=\mu(f)$ and $\mu\left(g_s\chi^s\right)>\mu(g)=\mu(f)$.

Also, for all $s\in I_\chi(f)$, we have $\mu\left(f_s\chi^s\right)=\mu(f)$ and (\ref{smus}) shows that $f_s\chi^s\smu g_s\chi^s$. Thus, $f_s\smu g_s$.
\end{proof}\bs

We shall see in section \ref{secKPmindeg} that the  equality 
$$
s'_\chi(fg)= s'_\chi(f)+s'_\chi(g),\ \mbox{ for all }f,g\in \kx
$$
holds if $\chi$ is a key polynomial of minimal degree. 

\subsection*{Semivaluation attached to a key polynomial}
\begin{lemma}\label{subgroup}
Let $\chi\in\kpm$. Consider the subset $\g\subset\gchi\subset\gm$ defined as
$$ \gchi=\left\{\mu(a)\mid a\in \kxchi,\ a\ne0\right\}.$$
Then, $\gchi$ is a subgroup of $\gm$ and $\gen{\gchi,\mu(\chi)}=\gm$.
\end{lemma}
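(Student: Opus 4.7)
The plan is to treat the two assertions separately, verifying that $\gchi$ is closed under addition and inversion and then showing $\gm \subseteq \langle \gchi, \mu(\chi)\rangle$ via $\chi$-expansions.

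Closure under addition I would handle directly. Given nonzero $a,b \in \kxchi$, take the $\chi$-expansion of their product $ab = c + d\chi$. By Lemma \ref{ab}(1), $\mu(ab) = \mu(c)$ with $c \in \kxchi$. To conclude $\mu(a)+\mu(b) \in \gchi$, I need $c \neq 0$: since $\chi$ is irreducible in $\kx$ by Lemma \ref{ab}(2) and $\deg a, \deg b < \deg \chi$, the polynomial $\chi$ does not divide $ab$ in $\kx$, so $c \neq 0$.

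Closure under inversion is the delicate step. Given nonzero $a \in \kxchi$, I would use the irreducibility of $\chi$ in $\kx$ and the fact that $\deg a < \deg \chi$ to conclude $\gcd(a,\chi)=1$ in $\kx$. Bezout then yields $u,v \in \kx$ with $ua + v\chi = 1$, and after reducing $u$ modulo $\chi$ by polynomial division I may assume $u \in \kxchi$, with $u \neq 0$. The $\chi$-expansion of $ua$ has $0$-th coefficient equal to $1$. Since $u,a \in \kxchi$ are nonzero, $\mu$-minimality gives $\chi \nmid_\mu u$ and $\chi \nmid_\mu a$, and $\mu$-irreducibility of $\chi$ then forces $\chi \nmid_\mu ua$. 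Proposition \ref{minimal0}(3) applied to $ua$ yields $\mu(ua) = \mu(1) = 0$, whence $\mu(u) = -\mu(a)$ lies in $\gchi$.

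For the second assertion, I would fix an arbitrary nonzero $f \in \kx$ with $\chi$-expansion $f = \sum_{s\geq 0} f_s \chi^s$ and apply Proposition \ref{minimal0}(2): any index $s_0$ realizing the minimum gives $\mu(f) = \mu(f_{s_0}) + s_0\,\mu(\chi) \in \langle \gchi, \mu(\chi)\rangle$, since $f_{s_0} \in \kxchi$ is nonzero. As every element of $\gm$ is a difference $\mu(f) - \mu(g)$ with $f,g \in \kx$ nonzero, and $\langle \gchi,\mu(\chi)\rangle$ is a group, this settles the inclusion $\gm \subseteq \langle \gchi, \mu(\chi)\rangle$; the reverse inclusion is tautological.

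The main obstacle is the inversion step: constructing $u \in \kxchi$ with $\mu(u) = -\mu(a)$ from Bezout data requires pinning down $\mu(ua)$ exactly, and this is precisely where the interplay between ordinary irreducibility of $\chi$ in $\kx$, $\mu$-minimality, and $\mu$-irreducibility becomes essential, mediated by Proposition \ref{minimal0}(3).
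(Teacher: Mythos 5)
Your proposal is correct and follows essentially the same route as the paper: closure under addition and the inversion step via a B\'ezout identity both rest on the fact that for $a,b\in\kxchi$ the $\chi$-expansion $ab=c+d\chi$ satisfies $\mu(ab)=\mu(c)$ (the paper cites Lemma \ref{ab}, whereas you rederive it from Proposition \ref{minimal0}(3) together with $\mu$-minimality and $\mu$-irreducibility), and the equality $\gen{\gchi,\mu(\chi)}=\gm$ comes from Proposition \ref{minimal0}(2) exactly as in the paper. The extra details you supply (nonvanishing of $c$, reducing the B\'ezout cofactor modulo $\chi$, passing from values of polynomials to all of $\gm$ by differences) are sound refinements of the same argument.
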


\begin{proof}
Since $\chi$ is $\mu$-minimal, Proposition \ref{minimal0} shows that $\gen{\gchi,\mu(\chi)}=\gm$. 

By Lemma \ref{ab}, $\gchi$ is closed under addition. 

Take $\mu(a)\in\gchi$ for some non-zero $a\in\kxchi$. The polynomials  $a$ and $\chi$ are coprime, because  $\chi$ is irreducible. Hence, they satisfy a B\'ezout identity
\begin{equation}\label{bezoutab}
ab+\chi\, d=1,\qquad \deg(b)<\deg(\chi),\quad \deg(d)<\deg(a)<\deg(\chi).
\end{equation}

Since $ab=1-d\chi$ is the $\chi$-expansion of $ab$, Lemma \ref{ab} shows that $\mu(ab)=\mu(1)=0$. Hence $-\mu(a)=\mu(b)\in\gchi$. This shows that  $\gchi$ is a subgroup of $\gm$.
\end{proof}\bs

Let $\chi\in\kpm$. Consider the prime ideal $\gp=\chi\,\kx$ and the field $K_\chi=\kx/\gp$. 

By the definition of $\gchi$, we get a well-defined onto mapping:
$$
v_\chi\colon K_\chi^*\longtwoheadrightarrow \gchi,\qquad v_\chi(f+\gp)=\mu(f_0),\quad \forall f\in\kx\setminus\gp,
$$
where $f_0\in\kx$ is the common $0$-th coefficient of the $\chi$-expansion of all polynomials in the class $f+\gp$.

\begin{proposition}\label{vphi}
The mapping $v_\chi$ is a valuation on $K_\chi$ extending $v$, with group of values $\gchi$.
\end{proposition}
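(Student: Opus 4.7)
The plan is to verify the valuation axioms, the only substantive input being Lemma~\ref{ab}(1); everything else is formal bookkeeping. First I would check that $v_\chi$ is well-defined on $K_\chi^*$: for any $f\in\kx$, the coefficient $f_0$ is simply the remainder of $f$ under polynomial division by $\chi$, hence depends only on the coset $f+\gp$; and $f_0=0$ iff $\chi\mid f$ iff $f\in\gp$, so on nonzero classes we have $f_0\in\kxchi\setminus\{0\}$ and $v_\chi(f+\gp)=\mu(f_0)\in\gchi$. Surjectivity onto $\gchi$ is then immediate from the definition of $\gchi$: for any $a\in\kxchi\setminus\{0\}$, the class $a+\gp$ is nonzero and $v_\chi(a+\gp)=\mu(a)$.

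Multiplicativity is the main step. For $f,g\in\kx\setminus\gp$ with $\chi$-expansions $f=f_0+\chi F$ and $g=g_0+\chi G$, introduce the $\chi$-expansion $f_0g_0=c+d\chi$ with $c,d\in\kxchi$. A direct computation yields $fg=c+\chi(d+f_0G+Fg_0+\chi FG)$, so $c=(fg)_0$. By Lemma~\ref{ab}(1) applied to $a=f_0,\ b=g_0$, we have $\mu(f_0g_0)=\mu(c)$, whence
\[
v_\chi(fg+\gp)=\mu(c)=\mu(f_0g_0)=\mu(f_0)+\mu(g_0)=v_\chi(f+\gp)+v_\chi(g+\gp).
\]

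For the ultrametric inequality, note that $f+g=(f_0+g_0)+\chi(F+G)$ with $\deg(f_0+g_0)<\deg(\chi)$. If $f_0+g_0\neq 0$, then $(f+g)_0=f_0+g_0$ and $v_\chi((f+g)+\gp)=\mu(f_0+g_0)\ge\min\{\mu(f_0),\mu(g_0)\}$; if $f_0+g_0=0$, then $f+g\in\gp$ and the value is $\infty$, so the inequality is trivial. Finally, for $a\in K^*$ we have $\deg(a)<\deg(\chi)$, so $a$ is its own $\chi$-expansion and $v_\chi(a+\gp)=\mu(a)=v(a)$, showing $v_\chi$ extends $v$. The only nontrivial ingredient is Lemma~\ref{ab}(1); beyond carefully tracking $\chi$-expansions I foresee no obstacle.
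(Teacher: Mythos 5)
Your proof is correct and follows essentially the same route as the paper: multiplicativity via Lemma~\ref{ab}(1) applied to the product $f_0g_0$, and the ultrametric inequality from the additivity $(f+g)_0=f_0+g_0$. The extra details you supply (well-definedness on cosets, surjectivity onto $\gchi$, the case $f_0+g_0=0$, and the restriction to $K$) are just an expansion of what the paper leaves implicit in its setup.
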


\begin{proof}
This mapping $v_\chi$ is a group homomorphism by Lemma \ref{ab}. Finally,
$$
v_\chi((f+g)+\gp)=\mu(f_0+g_0)\ge \Min\{\mu(f_0),\mu(g_0)\}=\Min\{v_\chi(f+\gp),v_\chi(g+\gp)\},
$$
because  $(f+g)_0=f_0+g_0$. Hence, $v_\chi$ is a valuation on $K_\chi$.
\end{proof}\bs

Denote the maximal ideal, the valuation ring and the residue class field of $v_\chi$ by: 
$$\m_\chi\subset\oo_\chi\subset K_\chi,\qquad k_\chi=\oo_\chi/\m_\chi.$$ 

Let $\t\in K_\chi=\kx/(\chi)$ be the root of $\chi$ determined by the class of $x$. 

With this notation, we have $K_\chi=K(\t)$, and
$$
v_\chi(f(\t))=\mu(f_0)=v_\chi(f_0(\t)),\qquad \forall f\in\kx.
$$

We abuse of language and denote still by $v_\chi$ the corresponding semivaluation
$$
\kx\longtwoheadrightarrow K_\chi \stackrel{v_\chi}\lra \gchi\cup\{\infty\}
$$
with support $\chi\kx=v_\chi^{-1}(\infty)$.\e

According to the definition given in (\ref{resIdeal}), the residual ideal $\rr(\chi)$ of a key polynomial $\chi$ is a prime ideal in $\dm$. Let us show that it is actually a maximal ideal in $\dm$.

\begin{proposition}\label{maxideal}
If $\chi$ is a key polynomial for $\mu$, then 
$\rr(\chi)$ is the kernel of the onto homomorphism $$\dm\longtwoheadrightarrow k_\chi,\qquad g+\pset^+_0\ \mapsto\ g(\t)+\m_\chi.$$ 
In particular, $\rr(\chi)$ is a maximal ideal in $\dm$.
\end{proposition}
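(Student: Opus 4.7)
The plan is to establish, step by step, that the proposed map $\phi\colon\dm\to k_\chi$, $g+\pset^+_0\mapsto g(\t)+\m_\chi$, is a well-defined surjective ring homomorphism with kernel exactly $\rr(\chi)$; maximality of $\rr(\chi)$ then follows because $k_\chi$ is a field.

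First I would verify well-definedness. If $g\in\pset_0$, write the $\chi$-expansion $g=\sum_{s\ge 0}g_s\chi^s$. By Proposition \ref{minimal0}(2), $\mu(g)=\Min\{\mu(g_s\chi^s)\mid 0\le s\}\le\mu(g_0)$, so $\mu(g_0)\ge\mu(g)\ge 0$. Since $v_\chi(g(\t))=\mu(g_0)$ by construction of $v_\chi$, this means $g(\t)\in\oo_\chi$, so the formula makes sense. The same inequality shows that if $g\in\pset^+_0$, then $\mu(g_0)>0$, so $g(\t)\in\m_\chi$ and the image in $k_\chi$ is zero; hence $\phi$ factors through $\dm$. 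That $\phi$ is a ring homomorphism is automatic, since it is the composition of the evaluation map $\kx\to K_\chi$, which sends $\pset_0$ into $\oo_\chi$, with the reduction $\oo_\chi\twoheadrightarrow k_\chi$.

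For surjectivity, given a class $f(\t)+\m_\chi\in k_\chi$ with $f\in\kx\setminus\chi\kx$ and $v_\chi(f(\t))\ge 0$, replace $f$ by its $0$-th coefficient $f_0$ in the $\chi$-expansion. Since $f(\t)=f_0(\t)$ in $K_\chi$ and $\mu(f_0)=v_\chi(f(\t))\ge 0$, we have $f_0\in\pset_0$ and $\phi(f_0+\pset^+_0)=f(\t)+\m_\chi$.

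The heart of the proof is identifying $\ker\phi$ with $\rr(\chi)$; this is where Proposition \ref{minimal0}(3) does the work. A non-zero class $\bar g=g+\pset^+_0\in\dm$ satisfies $\mu(g)=0$. Then $\bar g\in\ker\phi$ iff $\mu(g_0)>0$, while $\bar g\in\rr(\chi)$ iff $\hm(\chi)\mid\hm(g)$ in $\ggm$, i.e.\ iff $\chi\mid_\mu g$. Proposition \ref{minimal0}(3) says precisely that $\chi\mid_\mu g$ is equivalent to $\mu(g)<\mu(g_0)$, which under $\mu(g)=0$ is the same as $\mu(g_0)>0$. The zero class lies in both ideals trivially, so $\ker\phi=\rr(\chi)$. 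The main (mild) subtlety is recognising that the hypothesis $\mu(g)\ge 0$ that defines $\pset_0$ combined with $\mu(g)\le\mu(g_0)$ is exactly what converts the strict inequality criterion of $\mu$-divisibility into the condition $\mu(g_0)>0$ defining the kernel; without $\mu$-minimality of $\chi$, neither direction would be available.
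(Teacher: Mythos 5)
Your proof is correct and follows essentially the same route as the paper: well-definedness and the factorization through $\dm$ via Proposition \ref{minimal0}(2) (i.e.\ $v_\chi(g(\t))=\mu(g_0)\ge\mu(g)$), surjectivity by representing classes of $k_\chi$ by polynomials of degree less than $\deg(\chi)$, and the kernel identification via Proposition \ref{minimal0}(3), with maximality following since $k_\chi$ is a field.
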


\begin{proof}
By Proposition \ref{minimal0}, if $g\in\pset_0$, we have $v_\chi(g(\t))=\mu(g_0)\ge \mu(g)\ge0$, so that $g(\t)\in \oo_\chi$. Thus, we get a well-defined ring homomorphism $\pset_0\to k_\chi$. 

This mapping is onto, because every element in $k_\chi$ can be represented as $h(\t)+\m_\chi$ for some $h\in \kxchi$ with $v_\chi(h(\t))\ge0$. Since $\mu(h)=v_\chi(h(\t))\ge0$, we see that $h$ belongs to $\pset_0$. 

Finally, if $g\in\pset_0^+$, then $v_\chi(g(\t))\ge \mu(g)>0$; thus, the above homomorphism vanishes on $\pset_0^+$ and it induces an onto mapping $\dm\twoheadrightarrow k_\chi$.

The kernel of this mapping is the set of all elements $\hm(f)$ for $f\in\kx$ satisfying $\mu(f_0)>\mu(f)=0$. By Proposition \ref{minimal0}, this is equivalent to $\mu(f)=0$ and $\chi\mmu f$. In other words, the kernel is $\rr(\chi)$. 
\end{proof}

\section{Key polynomials of minimal degree}\label{secKPmindeg}
In this section, we study special properties of key polynomials of minimal degree. These objects are crucial for the resolution of the two main aims of the paper:

\begin{itemize}
\item Determine the structure of $\ggm$ as a $\kv$-algebra.\e
\item Determine the structure of the quotient set $\kpm/\!\smu$.
\end{itemize}

These tasks will be carried out in sections \ref{secDelta} and \ref{secKPuf}, respectively.\e

Recall the embedding of graded $\kv$-algebras
$$\ggv\hooklongrightarrow\ggm.$$

Let $\xi\in\ggm$ be a non-zero homogeneous element which is algebraic over $\ggv$. 
Then, $\xi$ satisfies an homogeneous equation:
$$
\ep_0+\ep_1\xi+\cdots+\ep_m\xi^m=0,
$$
with $\ep_0\,\dots,\ep_m$ homogeneous elements in $\ggv$ such that $\deg(\ep_i\xi^i)$ is constant for all indices $0\le i\le m$ for which $\ep_i\ne0$.

Since all non-zero homogeneous elements in $\ggv$ are units, we have:
\begin{equation}\label{xi}
\xi \mbox{ algebraic over }\ggv\imp\xi \mbox{ is a unit in }\ggm, \mbox{ and $\xi$ is integral over }\ggv. 
\end{equation} 

\begin{lemma}\label{alg}
Let $\ggv\subset \ggalg\subset\ggm$ be the subalgebra generated by all homogeneous elements in $\ggm$ which are algebraic over $\ggv$.

If an homogeneous element $\xi\in\ggm$ is algebraic over $\ggalg$, then it belongs to  $\ggalg$. 
\end{lemma}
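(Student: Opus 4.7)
My plan is to reduce algebraicity of $\xi$ over $\ggalg$ to algebraicity over $\ggv$, at which point the conclusion $\xi\in\ggalg$ follows from the very definition of $\ggalg$. Two ingredients underpin the reduction. First, $\ggalg$ inherits a grading from $\ggm$: since by construction it is generated over $\ggv$ by homogeneous elements algebraic over $\ggv$, the homogeneous components (with respect to the $\gm$-grading of $\ggm$) of any element of $\ggalg$ again lie in $\ggalg$. Second, I will invoke transitivity of algebraicity, applied inside the fraction field of $\ggm$.

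Starting from a nontrivial relation $\sum_i a_i\xi^i=0$ with $a_i\in\ggalg$ and $\xi$ homogeneous of some degree $d\in\gm$, the first step is to homogenize it. Expanding each coefficient into its homogeneous components $a_i=\sum_\gamma a_i^{(\gamma)}\in\ggalg$ and regrouping the resulting double sum by total $\gm$-degree $\gamma+id$, the fact that $\ggm$ is $\gm$-graded forces each graded slice $\sum_i a_i^{(\delta-id)}\xi^i=0$ to vanish separately. Nontriviality of the original relation ensures at least one slice is nontrivial, producing a homogeneous relation $\sum_i\eta_i\xi^i=0$ with $\eta_i=a_i^{(\delta-id)}\in\ggalg$ and $\deg(\eta_i\xi^i)=\delta$ for every index with $\eta_i\ne 0$.

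The second step is the transitivity argument. Each $\eta_i\in\ggalg$ is a $\ggv$-polynomial in finitely many homogeneous elements algebraic over $\ggv$, so inside $\op{Frac}(\ggm)$ each $\eta_i$ is itself algebraic over $\op{Frac}(\ggv)$ (sums and products of algebraic elements over a field are again algebraic). The homogeneous relation above then exhibits $\xi$ as algebraic over $\op{Frac}(\ggv)(\eta_0,\dots,\eta_m)$, which in turn is algebraic over $\op{Frac}(\ggv)$; by transitivity of field extensions, $\xi$ is algebraic over $\op{Frac}(\ggv)$. Clearing denominators yields a nonzero $P\in\ggv[T]$ with $P(\xi)=0$, so $\xi$ is algebraic over $\ggv$. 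Since $\xi$ is also homogeneous, this places $\xi$ in $\ggalg$ by definition.

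The only subtlety worth flagging is the nontriviality of the extracted slice, but this is automatic: in any group-graded ring a finite sum of nonzero elements of distinct homogeneous degrees cannot vanish, so the slicing of a nontrivial relation necessarily produces at least one nontrivial homogeneous relation. Consequently, no use is made of the extra information in~\eqref{xi} (integrality and units); the pure algebraic transitivity argument suffices.
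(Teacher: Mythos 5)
Your proof is correct, but it takes a genuinely different route from the paper's. The paper stays inside the graded ring and exploits the observation recorded in (\ref{xi}): every non-zero homogeneous element of $\ggalg$ is algebraic over $\ggv$, hence a unit, so a homogeneous relation for $\xi$ over $\ggalg$ can be divided by its leading coefficient, making $\xi$ \emph{integral} over $\ggalg$; since $\ggalg$ is generated by elements integral over $\ggv$, transitivity of integral dependence gives $\xi$ integral, hence algebraic, over $\ggv$, and so $\xi\in\ggalg$. You instead pass to $\op{Frac}(\ggm)$ and use transitivity of algebraicity for field extensions, homogenizing the relation first and clearing denominators at the end. Both arguments are valid. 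Yours is the more elementary and more general one: it uses nothing about the grading beyond $\ggm$ being a domain, and in fact your homogenization step (and hence the verification that $\ggalg$ is a graded subalgebra and that some slice is nontrivial) is dispensable, since an inhomogeneous relation over $\ggalg$ feeds directly into the fraction-field argument. The paper's argument, by contrast, is shorter once (\ref{xi}) is in place, never leaves the graded ring, and yields integrality of $\xi$ over $\ggv$ as a byproduct, a fact the paper reuses elsewhere (e.g.\ in the proof of Theorem \ref{kpempty}).
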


\begin{proof}
Since all non-zero homogeneous elements in $\ggalg$ are units, the element $\xi$ is integral over $\ggalg$. Hence, it is integral over $\ggv$, so that it belongs to $\ggalg$.   
\end{proof}\e

\begin{theorem}\label{phimindeg}
Let $\phi\in\kx$ be a monic polynomial of minimal degree $n$ such that $\hm(\phi)$ is transcendental over $\ggv$. Then, $\phi$ is a key polynomial for $\mu$.

Moreover, for $a,b\in\kx_n$, let the $\phi$-expansion of $ab$ be 
\begin{equation}\label{abc}
ab=c+d\phi,\qquad c,d\in\kx_n.
\end{equation}
Then, $ab\smu c$.
\end{theorem}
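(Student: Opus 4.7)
The plan is to identify $\ggm$ with a polynomial ring $\ggalg[u]$ in $u := \hm(\phi)$, where $\ggalg \subset \ggm$ is the subalgebra generated by all homogeneous elements algebraic over $\ggv$; both conclusions of the theorem then fall out structurally. Two preliminaries: by the minimality of $n = \deg\phi$, every nonzero $a \in \kx_n$ has $\hm(a) \in \ggalg$; and $u$ is transcendental over $\ggalg$, since otherwise Lemma \ref{alg} would place $u$ in $\ggalg$, making $u$ algebraic over $\ggv$ (every element of $\ggalg$ is a $\ggv$-polynomial in finitely many integral elements, hence integral, hence algebraic), contradicting the hypothesis on $\phi$. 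In particular, $\ggalg[u] \subset \ggm$ is a genuine polynomial ring.

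The technical heart is an expansion lemma: for every nonzero $g \in \kx$ with $\phi$-expansion $g = \sum_{s\ge 0} g_s \phi^s$,
$$\mu(g) = \Min_{s}\mu(g_s\phi^s), \qquad \hm(g) = \sum_{s \in I_g}\hm(g_s)\,u^s,$$
where $I_g = \{s : \mu(g_s\phi^s) = \mu(g)\}$. I would prove this by induction on the largest $s$ with $g_s \neq 0$, writing $g = g_0 + \phi G$ and applying the inductive hypothesis to $G$ to obtain $\hm(\phi G) = u\,\hm(G) \in u\,\ggalg[u]$; then splitting into the three cases according to whether $\mu(g_0)$ is less than, greater than, or equal to $\mu(\phi G)$. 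The only delicate sub-case is $\mu(g_0) = \mu(\phi G)$ with a potential cancellation $\hm(g_0) + \hm(\phi G) = 0$: there, $\hm(g_0) \in \ggalg$ would coincide with an element of $u\,\ggalg[u]$, and the transcendence of $u$ forces $\hm(g_0) = 0$, i.e.\ $g_0 = 0$, reducing the claim to the inductive one for $G$.

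Granting the expansion lemma, $\mu$-minimality of $\phi$ follows immediately from Proposition \ref{minimal0}(2). Moreover, since every nonzero homogeneous element of $\ggm$ has the form $\hm(g)$, the lemma gives $\ggm = \ggalg[u]$, a polynomial ring over a domain in which $u$ is prime; hence $\hm(\phi)\ggm$ is a nonzero prime ideal, which is $\mu$-irreducibility, and $\phi \in \kpm$. For the multiplicative claim, take nonzero $a,b \in \kx_n$; the condition $\deg(ab) < 2n$ forces $c, d \in \kx_n$ in the Euclidean division $ab = c + d\phi$, so this is the $\phi$-expansion of $ab$. The expansion lemma places $\hm(ab)$ in $\ggalg + \ggalg\,u$, while $\hm(ab) = \hm(a)\hm(b) \in \ggalg$ has zero $u^1$-coefficient in $\ggalg[u]$; matching coefficients forces $\mu(d\phi) > \mu(ab)$, which is exactly $ab \smu c$. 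The main obstacle is the cancellation sub-case in the induction, which genuinely requires the transcendence of $u$ over $\ggalg$.
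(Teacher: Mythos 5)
Your proposal is correct, and it is built around a different key lemma than the paper's proof, even though the engine is the same: both arguments kill potential cancellations among the monomials $f_s\phi^s$ by invoking, via Lemma \ref{alg}, the transcendence of $\hm(\phi)$ over the subalgebra $\ggalg$ (whose elements are indeed algebraic over $\ggv$, as you note). The paper works locally: it first proves $\mu$-minimality by showing directly that $\sum_{s\in I}\hm(f_s)\hm(\phi)^s\ne 0$ for the monomials of minimal value, then proves $ab\smu c$ by a short case analysis (if $\mu(c)\ge\mu(d\phi)$, the resulting identity among $\hm(a),\hm(b),\hm(c),\hm(d)$ would make $\hm(\phi)$ algebraic over $\ggalg$), and finally deduces $\mu$-irreducibility from the statement $ab\smu c$ applied to the product of zeroth coefficients $f_0g_0$, using Proposition \ref{minimal0}(3). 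You instead prove, by induction on the $\phi$-degree, the stronger expansion lemma $\mu(g)=\Min_s\,\mu(g_s\phi^s)$ together with $\hm(g)=\sum_{s}\hm(g_s)\hm(\phi)^s$ (sum over the minimizing indices), i.e.\ the identification $\ggm=\ggalg[\hm(\phi)]$ as a polynomial ring; minimality, irreducibility (the quotient by $\hm(\phi)$ is the domain $\ggalg$) and $ab\smu c$ (comparison of coefficients in degree one) then all follow formally. Your route costs the inductive lemma but buys at once structural information about $\ggm$ that the paper only develops later (compare Theorems \ref{Dstructure} and \ref{Hmug}); the paper's argument is more economical for the theorem as stated. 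One cosmetic remark: in your cancellation sub-case, the relation $\hm(g_0)+\hm(\phi)\hm(G)=0$ with $g_0\ne0$ is simply impossible, since by the induction hypothesis every coefficient appearing is the initial term of a nonzero polynomial of degree less than $n$ and hence nonzero; so the sub-case does not ``reduce to'' the inductive claim, it just cannot occur. This does not affect the correctness of your argument.
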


\begin{proof}
Let us first show that $\phi$ is $\mu$-minimal. 

According to Proposition \ref{minimal0}, the $\mu$-minimality of $\phi$ is equivalent to 
$$
\mu(f)=\Min\{\mu\left(f_s\phi^s\right)\mid 0\le s\},
$$
for all $f\in \kx$, being $f=\sum_{0\le s}f_s\phi^s$ its canonical $\phi$-expansion. 

For any given $f\in\kx$, let $\delta=\Min\{\mu\left(f_s\phi^s\right)\mid 0\le s\}$, and consider 
$$
I=\{0\le s\mid \mu\left(f_s\phi^s\right)=\delta\},\qquad f_I=\sum_{s\in I}f_s\phi^s.
$$

We have  $\mu(f)\ge\delta$, and the desired equality $\mu(f)=\delta$ is equivalent to $\mu(f_I)=\delta$.

If $\#I=1$, this is obvious. In the case  $\#I>1$, the equality  $\mu(f_I)=\delta$ follows from the transcendence of $\hm(\phi)$ over $\ggv$. In fact,
$$
\mu(f_I)>\delta \sii \sum_{s\in I}\hm(f_s)\hm(\phi)^s=0.
$$
By the minimality of $n=\deg(\phi)$, all $\hm(f_s)$ are algebraic over $\ggv$. Hence,  
$\mu(f_I)>\delta$ would imply that $\hm(\phi)$ is algebraic over $\ggalg$, leading to $\hm(\phi)$ algebraic over $\ggv$ by Lemma \ref{alg}. This ends the proof that $\phi$ is $\mu$-minimal.\e

Let us now prove the last statement of the theorem.

For $a,b\in\kx_n$ with $\phi$-expansion given by (\ref{abc}), Proposition \ref{minimal0} shows that
$$
\mu(ab)=\Min\{\mu(c), \mu(d\phi)\},
$$
because $\phi$ is $\mu$-minimal. By equation (\ref{Hmu}), the inequality $\mu(c)\ge \mu(d\phi)$ implies
$$
\hm(ab)=
\begin{cases}
\hm(d)\hm(\phi),&\mbox{ if }\mu(c)>\mu(d\phi),\\
\hm(c)+\hm(d)\hm(\phi),&\mbox{ if }\mu(c)=\mu(d\phi).
\end{cases}
$$
By the minimality of $n$, the four elements $\hm(a)$, $\hm(b)$, $\hm(c)$, $\hm(d)$ are algebraic over $\ggv$. Hence, $\hm(\phi)$ would be algebraic over $\ggalg$, leading to $\hm(\phi)$ algebraic over $\ggv$ by Lemma \ref{alg}. This contradicts our assumption on $\hm(\phi)$.

Therefore, we must have $\mu(c)<\mu(d\phi)$, leading to $ab\smu c$.\e

Finally, let us prove that $\phi$ is $\mu$-irreducible.

Let $f,g\in\kx$ be polynomials such that $\phi\nmid_\mu f$, $\phi\nmid_\mu g$. By Proposition \ref{minimal0},
$$
\mu(f)=\mu(f_0),\qquad \mu(g)=\mu(g_0),
$$
where $f_0$, $g_0$ are the $0$-th degree coefficients of the $\phi$-expansions of $f$, $g$, respectively.

Let $f_0g_0=c+d\phi$ be the $\phi$-expansion of $f_0g_0$. As shown above, $f_0g_0\smu c$, so that
$$
\mu(fg)=\mu(f_0g_0)=\mu(c).
$$
Since $c$ is the $0$-th coefficient of the $\phi$-expansion of $fg$, the equality $\mu(fg)=\mu(c)$ shows that $\phi\nmid_\mu fg$, by Proposition \ref{minimal0}.
This ends the proof that $\phi$ is $\mu$-irreducible.
\end{proof}\e

\begin{corollary}\label{lmn}
Consider the following three natural numbers:\e

$\ell=$ minimal degree of $f\in\kx$ such that $\hm(f)$ is not a unit in $\ggm$.

$m=$ minimal degree of a key polynomial for $\mu$.

$n=$ minimal degree of $f\in\kx$ such that $\hm(f)$ is transcendental over $\ggv$.\e 

If one of these numbers exists, then all exist and \ $\ell=m=n$.
\end{corollary}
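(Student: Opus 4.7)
The plan is to exhibit a cyclic chain of inequalities $m \le n \le \ell \le m$, each derived under the hypothesis that the quantity on the right exists, and then note that the existence of any one of the three immediately forces the existence of the other two by following the chain.

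First I would verify $m \le n$ under the assumption that $n$ exists: this is an immediate consequence of Theorem \ref{phimindeg}, since any monic polynomial $\phi$ of minimal degree whose initial term is transcendental over $\ggv$ is automatically a key polynomial for $\mu$, hence witnesses $m \le \deg(\phi) = n$.

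Next I would prove $n \le \ell$ assuming $\ell$ exists. Here I would use the contrapositive of the implication (\ref{xi}): if $\hm(f)$ were algebraic over $\ggv$, then it would be a unit in $\ggm$. So any $f$ with $\hm(f)$ not a unit automatically has $\hm(f)$ transcendental over $\ggv$, and taking such an $f$ of minimal degree $\ell$ yields $n \le \ell$.

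For the closing inequality $\ell \le m$ under existence of $m$, I would pick a key polynomial $\chi$ of degree $m$. By definition $\hm(\chi)\ggm$ is a non-zero prime ideal, so $\hm(\chi)$ is not a unit in the integral domain $\ggm$; thus $\chi$ witnesses $\ell \le m$.

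Finally, to handle existence: assume any one of the three minima exists. If $n$ exists, then $m$ exists by Step 1, and then $\ell$ exists by Step 3; if $\ell$ exists, then $n$ exists by Step 2, hence $m$ exists by Step 1; if $m$ exists, then $\ell$ exists by Step 3, hence $n$ exists by Step 2. In every case all three exist, and the chain $m \le n \le \ell \le m$ forces equality. There is no real obstacle here — the content is all packaged into Theorem \ref{phimindeg} and the observation (\ref{xi}); the corollary is just the bookkeeping that ties these two facts together.
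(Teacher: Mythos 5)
Your proof is correct and rests on exactly the same ingredients as the paper's: Theorem \ref{phimindeg} (minimal transcendental degree yields a key polynomial), the observation (\ref{xi}) (algebraic over $\ggv$ implies unit, used contrapositively), and the fact that a key polynomial has non-unit initial term. The paper merely packages these as an equivalence of the three existence conditions followed by a direct identification $n=\ell=m$, whereas you arrange them as the cyclic chain $m\le n\le \ell\le m$; the content is the same.
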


\begin{proof}
For any $f\in\kx$ we have
$$
f\in\kpm\imp \hm(f)\not\in\ggm^*\imp \hm(f)\not\in\ggalg\imp \kpm\ne\emptyset,  
$$
the last implication by Theorem \ref{phimindeg}. Hence, the conditions for the existence of these numbers are all equivalent:
\begin{align*}
\exists\hm(f)\not\in\ggm^*&\sii\kpm\ne\emptyset\sii \ggalg\subsetneq\ggm.
\end{align*}

Suppose these conditions are satisfied. By Theorem \ref{phimindeg}, there exists a key polynomial $\phi$ of degree $n$.
Also, for any $a\in\kx_n$, the homogeneous element $\hm(a)\in\ggm$ is algebraic over $\ggv$, hence a unit in $\ggm$ by (\ref{xi}). 

Since $\hm(\phi)$ is not a unit, this proves that $n=\ell$. Since there are no key polynomials in $\kx_n$, this proves $n=m$ too.
\end{proof}\e

\begin{corollary}\label{sprime+}
If $\phi$ is a key polynomial of minimal degree, then for all $f,g\in \kx$ we have
$$
\lrc(fg)=\lrc(f)\lrc(g),\qquad s'_\phi(fg)= s'_\phi(f)+s'_\phi(g).
$$ 
\end{corollary}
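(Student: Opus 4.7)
The plan is to compute the $\phi$-expansion of $fg$ directly from those of $f$ and $g$, and identify which indices realize the minimum $\mu$-value $\mu(fg)=\mu(f)+\mu(g)$. Assume $f,g\ne 0$ and write $f=\sum_s f_s\phi^s$, $g=\sum_t g_t\phi^t$, with $s'=s'_\phi(f)$, $t'=s'_\phi(g)$. Each product $f_sg_t$ lies in $\kx$, with $\phi$-expansion $f_sg_t=c_{s,t}+d_{s,t}\phi$ and $c_{s,t},d_{s,t}\in\kx_n$. Applying Theorem \ref{phimindeg} to $f_s,g_t\in\kx_n$ gives $f_sg_t\smu c_{s,t}$, so $\mu(c_{s,t})=\mu(f_sg_t)$ and $\mu(d_{s,t}\phi)>\mu(f_sg_t)$. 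Reindexing by the power of $\phi$ then yields the $\phi$-expansion $fg=\sum_u(fg)_u\phi^u$ with
$$(fg)_u=\sum_{s+t=u}c_{s,t}+\sum_{s+t=u-1}d_{s,t}.$$

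The next step is to bound the $\mu$-values of the summands in $(fg)_u\phi^u$. For a $c$-term, $\mu(c_{s,t}\phi^u)=\mu(f_s\phi^s)+\mu(g_t\phi^t)$ equals $\mu(fg)$ exactly when $s\in I_\phi(f)$ and $t\in I_\phi(g)$, and strictly exceeds $\mu(fg)$ otherwise. For a $d$-term with $s+t=u-1$, the strict inequality $\mu(d_{s,t}\phi)>\mu(c_{s,t})$ from Theorem \ref{phimindeg} gives
$$\mu(d_{s,t}\phi^u)>\mu(f_sg_t\phi^{u-1})=\mu(f_s\phi^s)+\mu(g_t\phi^t)\ge\mu(f)+\mu(g)=\mu(fg).$$

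I would then conclude by examining $u\ge s'+t'$. For $u>s'+t'$, any pair $(s,t)$ with $s+t=u$ forces $s>s'$ or $t>t'$, so $s\notin I_\phi(f)$ or $t\notin I_\phi(g)$; combined with the $d$-estimate, every term of $(fg)_u\phi^u$ has $\mu$-value strictly above $\mu(fg)$, and $u\notin I_\phi(fg)$. At $u=s'+t'$, the pair $(s',t')$ is the unique one with $s+t=u$ lying in $I_\phi(f)\times I_\phi(g)$; all other $c$- and $d$-contributions strictly exceed $\mu(fg)$. Applying (\ref{Hmu}) gives
$$\hm((fg)_{s'+t'})=\hm(c_{s',t'})=\hm(f_{s'})\hm(g_{t'})=\lrc(f)\lrc(g),$$
a nonzero element because $\ggm$ is a domain. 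Hence $s'+t'\in I_\phi(fg)$, which together with the previous case yields $s'_\phi(fg)=s'+t'$ and $\lrc(fg)=\lrc(f)\lrc(g)$.

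The delicate point is ruling out cancellation at the critical index $u=s'+t'$: the $d$-contributions are pushed strictly above $\mu(fg)$ by the inequality $\mu(d_{s,t}\phi)>\mu(c_{s,t})$ of Theorem \ref{phimindeg}, while among the $c$-terms only $c_{s',t'}$ survives because $(s',t')$ is the unique element of $I_\phi(f)\times I_\phi(g)$ with $s+t=s'+t'$. Everything else in the argument is routine bookkeeping of the $\phi$-expansion, made possible by the crucial input that products of elements of $\kx_n$ are $\mu$-equivalent to their $\phi$-reductions.
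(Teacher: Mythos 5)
Your proof is correct and follows essentially the same route as the paper: expand $fg$ through the products $f_sg_t\phi^{s+t}$, use Theorem \ref{phimindeg} to reduce each $f_sg_t$ to its $0$-th coefficient, and observe that at the index $s'_\phi(f)+s'_\phi(g)$ only the pair $(s'_\phi(f),s'_\phi(g))$ can realize the minimal value $\mu(fg)$, while larger indices cannot. The only difference is bookkeeping: you track the carry terms $d_{s,t}$ to obtain the canonical $\phi$-expansion of $fg$ directly, whereas the paper passes to a $\mu$-equivalent polynomial $h$ with canonical expansion and transfers $I_\phi$ and $\lrc$ via Lemma \ref{sprime}.
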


\begin{proof}
Consider the $\phi$-expansions $f=\sum_{0\le s}f_s\phi^s$, $g=\sum_{0\le t}g_t\phi^t$. 

We may write
$$
fg=\sum_{0\le j}b_j\phi^j,\qquad b_j=\sum_{s+t=j}f_sg_t.
$$
For each index $j$, there exist $s_0,t_0$ such that $s_0+t_0=j$ and
$$\mu(b_j\phi^j)\ge \mu(f_{s_0}\phi^{s_0}g_{t_0}\phi^{t_0})\ge \mu(fg),
$$
the last inequality because $\mu(f_{s_0}\phi^{s_0})\ge\mu(f)$ and $\mu(g_{t_0}\phi^{t_0})\ge \mu(g)$. Hence,  
$$
fg\smu \sum_{j\in J}b_j\phi^j,\qquad J=\left\{0\le j\mid \mu\left(b_j\phi^j\right)=\mu(fg)\right\}.
$$

For every $j\in J$, consider the set 
$$I_j=\{(s,t)\mid s+t=j,\ s\in I_\phi(f),\ t\in I_\phi(g)\}.$$ Then, (\ref{Hmu}) and Theorem \ref{phimindeg} show the existence of  $c_{s,t}\in\kx_n$ such that 
$$
\hm(b_j)=\sum_{(s,t)\in I_j}\hm(f_sg_t)=\sum_{(s,t)\in I_j}\hm(c_{s,t})=\hm(c_j),
$$
where $c_j=\sum_{(s,t)\in I_j}c_{s,t}$. 
Therefore,
again by  (\ref{Hmu}), we deduce that
$$
fg\smu h,\qquad h=\sum_{j\in J}c_j\phi^j.
$$
Note that $J=I_\phi(h)$ by construction. 

By Lemma \ref{sprime}, $I_\phi(fg)=I_\phi(h)=J$ and $\lrc(fg)=\lrc(h)$.

Thus, if $\ell=s'_\phi(f)$, $m=s'_\phi(g)$, we need to show that $\mx(J)=\ell+m$ and $\lrc(h)=\hm(f_\ell)\hm(g_m)$.

If $j>\ell+m$, then for all pairs $(s,t)$ with $s+t=j$ we have either $s>\ell$ or $t>m$. Thus, $\mu(f_sg_t\phi^j)=\mu(f_s\phi^sg_t\phi^t)>\mu(fg)$. So that $j\not\in J$.

For $j=\ell+m$, the same argument applies to all pairs $(s,t)$ with $s+t=j$, except for the pair $(s_\ell,t_m)$, for which $\mu(f_{s_\ell}g_{t_m}\phi^j)=\mu(fg)$. Therefore, $j=\ell+m$ is the maximal index in $J$ and $\lrc(h)=\hm(c_{\ell+m})=\hm(b_{\ell+m})=\hm( f_\ell g_m)$.    
\end{proof}

\subsection*{Units and maximal subfield of $\dm$}

\begin{proposition}\label{smallunits}
Let $\phi$ be a key polynomial of minimal degree $n$. 
For any non-zero $g\in\kx$, with $\phi$-expansion $g=\sum_{0\le s}g_s\phi^s$, the following conditions are equivalent.
\begin{enumerate}
\item $g\smu a$, for some $a\in\kx_n$.
\item $\hm(g)$ is algebraic over $\ggv$. 
\item $\hm(g)$ is a unit in $\ggm$. 
\item $s_\phi(g)=s'_\phi(g)=0$.
\item $g\smu g_0$.
\end{enumerate}
\end{proposition}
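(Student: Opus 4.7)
The plan is to prove the cyclic chain of implications $(5)\Rightarrow(1)\Rightarrow(2)\Rightarrow(3)\Rightarrow(4)\Rightarrow(5)$. Most links are nearly automatic given the material already developed; the substantive input appears only in $(3)\Rightarrow(4)$, where I exploit the multiplicativity of $s'_\phi$.

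The step $(5)\Rightarrow(1)$ is free, since $g_0\in\kx_n$ by definition of the $\phi$-expansion. For $(1)\Rightarrow(2)$, note that $g\ne 0$ and $g\smu a$ force $a\ne 0$ (a nonzero element cannot be $\mu$-equivalent to $0$), so $\deg(a)<n$; by the minimality of $n$ in Corollary~\ref{lmn}, $\hm(a)$ is algebraic over $\ggv$, and $\hm(g)=\hm(a)$ then gives (2). The implication $(2)\Rightarrow(3)$ is exactly the content of~(\ref{xi}).

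The key step is $(3)\Rightarrow(4)$. If $\hm(g)$ is a unit in $\ggm$, choose $h\in\kx$ with $\hm(g)\hm(h)=1$, so $gh\smu 1$. Both $s_\phi$ and $s'_\phi$ depend only on the $\mu$-equivalence class (by definition and by Lemma~\ref{sprime}, respectively), hence $s_\phi(gh)=s_\phi(1)=0$ and $s'_\phi(gh)=s'_\phi(1)=0$. Applying the multiplicativity~(\ref{multiplicative}) of $s_\phi$ and Corollary~\ref{sprime+} for $s'_\phi$, each of these invariants splits as a sum of non-negative integers, forcing $s_\phi(g)=s'_\phi(g)=0$.

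Finally, $(4)\Rightarrow(5)$ is a direct application of Lemma~\ref{sphi}: the hypothesis reads $\Min(I_\phi(g))=\mx(I_\phi(g))=0$, whence $I_\phi(g)=\{0\}$, and the lemma yields $g\smu g_0$. The only place where the hypothesis that $\phi$ has \emph{minimal} degree is genuinely used is in invoking Corollary~\ref{sprime+} inside $(3)\Rightarrow(4)$; this is the only point I expect might feel non-routine, but since the multiplicativity of $s'_\phi$ is already established in the previous section, no real obstacle remains.
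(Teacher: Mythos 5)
Your proposal is correct and follows essentially the same route as the paper: the same cyclic chain of implications, using Corollary \ref{lmn} and (\ref{xi}) for the easy links, the multiplicativity of $s_\phi$ and $s'_\phi$ (equation (\ref{multiplicative}), Lemma \ref{sprime}, Corollary \ref{sprime+}) for $(3)\Rightarrow(4)$, and Lemma \ref{sphi} for $(4)\Rightarrow(5)$. No gaps to report.
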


\begin{proof}
If $g\smu a$, for some $a\in\kx_n$, then $\hm(g)=\hm(a)$  is algebraic over $\ggv$ by Corollary \ref{lmn}.

If $\hm(g)$ is algebraic over $\ggv$ then $\hm(g)$ is a unit by (\ref{xi}).

If  $\hm(g)$ is a unit, there exists $f\in\kx$ such that $fg\smu 1$. By Lemma \ref{sprime}, $I_\phi(fg)=I_\phi(1)=\{0\}$, so that $s_\phi(fg)=s'_\phi(fg)=0$. By equation (\ref{multiplicative}) and Corollary \ref{sprime+}, we deduce that  $s_\phi(g)=s'_\phi(g)=0$.

This condition  $s_\phi(g)=s'_\phi(g)=0$ is equivalent to $I_\phi(g)=\{0\}$, which implies 
$g\smu g_0$ by Lemma \ref{sphi}. 

This proves (1) $\Longrightarrow$ (2) $\Longrightarrow$ (3)
$\Longrightarrow$ (4) $\Longrightarrow$ (5). 
Finally, (5) $\Longrightarrow$ (1) is obvious.
\end{proof}\bs


As a consequence of this characterization of homogeneous algebraic elements, the subfield $\kal\subset\dm$ of all elements in $\dm$ which are algebraic over $\ggv$ can be expressed as:
\begin{equation}\label{defkal}
\kal=\dm^*\cup\{0\}=\left\{\hm(a)\mid a\in\kx_n,\ \mu(a)=0\right\}\cup\{0\}.
\end{equation}
Since $\kal$ contains all units of $\dm$, it is the maximal subfield contained in $\dm$.

Since every $\xi\in\kal$ is homogeneous of degree zero, any monic homogeneous algebraic equation of $\xi$ over $\ggv$ has coefficients in the residue field $\kv$. Thus, $\kal$ coincides with the algebraic closure of $\kv$ in $\dm$.

\begin{proposition}\label{maxsubfield}
Let $\kal\subset\dm$ be the algebraic closure of $\kv$ in $\dm$.
For any key polynomial $\phi$ of minimal degree $n$, the mapping $\kal\hookrightarrow\dm\twoheadrightarrow k_\phi$ is an isomorphism. 
\end{proposition}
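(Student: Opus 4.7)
The proposition asserts that $\kal \hookrightarrow \dm \twoheadrightarrow k_\phi$ is an isomorphism. Since $\kal$ is a field and the composition is a unital ring homomorphism, injectivity is automatic. So the entire content of the statement is the \emph{surjectivity} onto $k_\phi$.

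My plan for surjectivity is to recycle the representative-choosing argument already used in the proof of Proposition \ref{maxideal}. Given a nonzero $y \in k_\phi$, that argument produces a representative $y = h(\t) + \m_\phi$ with $h \in \kxchi = \kx_n$ (where $\chi = \phi$) and $v_\phi(h(\t)) \geq 0$. The key observation is that when $h\in\kx_n$, its own $\phi$-expansion reduces to $h = h_0$, so the defining formula for $v_\phi$ in Proposition \ref{vphi} gives $v_\phi(h(\t)) = \mu(h)$. Because $y$ is nonzero, $v_\phi(h(\t)) = 0$, and hence $\mu(h) = 0$.

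With $h \in \kx_n$ satisfying $\mu(h) = 0$, the explicit description of $\kal$ in equation~(\ref{defkal}) places $\hm(h)$ inside $\kal$. By construction of the quotient map $\dm \twoheadrightarrow k_\phi$ recalled in Proposition~\ref{maxideal}, $\hm(h)$ is sent to $h(\t) + \m_\phi = y$. So $y$ lies in the image of $\kal$, which completes surjectivity.

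I do not expect any serious obstacle: the two ingredients fit together directly. The only point worth being careful about is keeping track of the fact that representatives in $k_\phi$ may be chosen in $\kx_n$, which is exactly what converts $v_\phi(h(\t)) \geq 0$ into $\mu(h)\geq 0$ and thereby connects with the condition $\mu(a) = 0$ that characterizes $\kal$ via~(\ref{defkal}).
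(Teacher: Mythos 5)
Your proposal is correct and follows essentially the same route as the paper: restrict the surjection $\dm\twoheadrightarrow k_\phi$ of Proposition \ref{maxideal} to $\kal$, observe that every nonzero class in $k_\phi$ has a representative $h(\t)+\m_\phi$ with $h\in\kx_n$ and $\mu(h)=0$, so that $\hm(h)\in\kal$ by (\ref{defkal}), and use that a unital homomorphism out of a field is injective. You merely make explicit the small refinement (from $v_\phi(h(\t))\ge0$ to $=0$ for nonzero classes) that the paper leaves implicit.
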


\begin{proof}
The restriction to $\kal$ of the onto mapping $\dm\to k_\phi$ described in Proposition \ref{maxideal} maps
$$\hm(a)\ \longmapsto\  a(\t)+\m_\phi,\qquad \forall a\in \kx_n.$$ Since the images cover all $k_\phi$, this mapping is an isomorphism between $\kal$ and $k_\phi$.
\end{proof}

\subsection*{Upper bound for weighted values}

Let us characterize $\mu$-minimality of a polynomial $f\in\kx$ in terms of its $\phi$-expansion.

\begin{proposition}\label{minimal}
Let $\phi$ be a key polynomial of minimal degree $n$. 
For any $f\in \kx$ with $\phi$-expansion \,$f=\sum_{s=0}^\ell\,f_s\phi^s$, $f_\ell\ne 0$,\, the following conditions are equivalent:
\begin{enumerate}
\item $f$ is $\mu$-minimal.
\item $\deg(f)=s_\phi'(f)n$. 
\item $\deg(f_\ell)=0$ and $\mu(f)=\mu\left(f_\ell\phi^\ell\right)$.
\end{enumerate}
\end{proposition}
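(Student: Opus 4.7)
The plan is to deduce the equivalence of (2) and (3) by direct bookkeeping, and then prove $(1)\sii(3)$. Since $\deg f = \ell n + \deg f_\ell$ with $0 \le \deg f_\ell < n$, and $s'_\phi(f) \le \ell$ always, the equation $\deg f = s'_\phi(f)\, n$ is possible only when $\deg f_\ell = 0$ and $s'_\phi(f) = \ell$; conversely these two conditions clearly give (2). The condition $s'_\phi(f) = \ell$ amounts to $\ell \in I_\phi(f)$, i.e., $\mu(f) = \mu(f_\ell \phi^\ell)$, so $(2)\sii(3)$.

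For $(3)\imp(1)$, I would argue by contradiction. Suppose $\deg f = \ell n$ and $f \mmu g$ for some non-zero $g$ with $\deg g < \ell n$. Then $g \smu fh$ for some $h\in\kx$, and Lemma \ref{sprime} combined with Corollary \ref{sprime+} gives $s'_\phi(g) = s'_\phi(f) + s'_\phi(h) \ge \ell$. This forces $g_\ell \ne 0$ in the $\phi$-expansion of $g$, hence $\deg g \ge \ell n$, a contradiction.

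For $(1)\imp(3)$, the first step is to prove $\ell \in I_\phi(f)$. If not, then $I_\phi(f) \subseteq \{0, \dots, \ell-1\}$ and Lemma \ref{sphi} gives $f \smu r$, with $r := \sum_{s\in I_\phi(f)} f_s\phi^s$ a non-zero polynomial of degree at most $\ell n - 1 < \deg f$; then $f \mmu r$ contradicts $\mu$-minimality.

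The main obstacle is the second step: showing $\deg f_\ell = 0$. My plan is to assume $\deg f_\ell > 0$ and manufacture a non-zero $g$ of degree $\ell n$ with $f \mmu g$, again contradicting $\mu$-minimality. Since $f_\ell \in \kx_n \setminus\{0\}$, Proposition \ref{smallunits} makes $\hm(f_\ell)$ a unit in $\ggm$; inverting and reducing modulo $\phi$ (via Proposition \ref{smallunits} applied to the inverse), I can pick $h \in \kx_n$ with $f_\ell h \smu 1$. For each $s \in I_\phi(f)$, write the $\phi$-expansion $f_s h = c_s + d_s\phi$ with $c_s, d_s \in \kx_n$ and invoke Theorem \ref{phimindeg} to obtain $f_s h \smu c_s$; in particular $\hm(c_\ell) = 1$. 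Finally, set
\[
g := \phi^\ell + \sum_{\substack{s\in I_\phi(f)\\ s<\ell}} c_s\,\phi^s.
\]
A direct calculation then yields $\hm(g) = \sum_{s\in I_\phi(f)} \hm(c_s)\hm(\phi)^s = \hm(f)\hm(h) = \hm(fh)$, so $g \smu fh$ and $f \mmu g$. Inspection of the $\phi$-expansion of $g$ gives $\deg g = \ell n$, completing the contradiction.
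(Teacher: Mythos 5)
Your proof is correct and follows essentially the same route as the paper's: (2)$\sii$(3) by the same degree bookkeeping, (3)$\imp$(1) via Lemma \ref{sprime} and Corollary \ref{sprime+} exactly as in the paper's (2)$\imp$(1), and (1)$\imp$(3) by inverting $\hm(f_\ell)$ and reducing the coefficients modulo $\phi$ to produce a $\mu$-divisor of $f$ of degree $\ell n$ (you invoke Theorem \ref{phimindeg} where the paper invokes Proposition \ref{smallunits}, and you phrase it as a contradiction, but the construction is the same in substance). One trivial nit: in your (3)$\imp$(1) what follows is that $g_{s'_\phi(g)}\ne 0$ for some $s'_\phi(g)\ge\ell$, not necessarily $g_\ell\ne0$, but this still yields $\deg g\ge \ell n$ and the contradiction you want.
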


\begin{proof}
Since $\deg(f) = \deg(f_\ell)+\ell n$ and $s'_\phi(f)\le\ell$, item (2) is equivalent to $\deg(f_\ell)=0$ and $s'_\phi(f)=\ell$. Thus, (2) and (3) are equivalent. \e

Let us deduce (3) from (1). Since $\deg(f-f_\ell\phi^\ell)<\deg(f)$, the $\mu$-minimality of $f$ implies that $f-f_\ell\phi^\ell$ cannot be $\mu$-equivalent to $f$. Hence, $\mu(f_\ell\phi^\ell)=\mu(f)$. 

In particular, $\ell=\mx\left(I_\phi(f)\right)$.
By Proposition \ref{smallunits}, $\hm(f_s)$ is a unit for all $f_s\ne0$. Take $b\in\kx$ and  $c_s\in \kx_n$ such that $b\,f_\ell\smu1$ and  $bf_s\sim_{\mu}c_s$, for all $0\le s<\ell$. 

If we denote $c_\ell=1$, Lemma \ref{sphi} and equation (\ref{Hmu}) show that
$$
bf\smu b\sum_{s\in I_\phi(f)}f_s\phi^s\imp\hm(bf)=\sum_{s\in I_\phi(f)}\hm(bf_s\phi^s)=\sum_{s\in I_\phi(f)}\hm(c_s\phi^s).
$$
Hence, $bf\smu g:=\sum_{s\in I_\phi(f)}c_s\phi^s$. 
Since $f\mmu g$ and $f$ is $\mu$-minimal, 
$$
\deg(f_\ell)+\ell n=\deg(f)\le\deg(g)=\ell n, 
$$
which implies $\deg(f_\ell)=0$.\e

Finally, let us deduce (1) from (2). Take non-zero $g,h\in \kx$ such that $g\sim_{\mu}fh$. By Lemma \ref{sprime} and Corollary \ref{sprime+}, 
$s'_\phi(g)=s'_\phi(fh)=s'_\phi(f)+s'_\phi(h)$, so that $$\deg(f)=s'_\phi(f)\deg(\phi)\le s'_\phi(g)\deg(\phi)\le \deg(g).$$
Thus, $f$ is $\mu$-minimal.
\end{proof}\e

\begin{corollary}\label{minimalpower}
Take $f\in \kx$ and $m$ a positive integer. Then, $f$ is $\mu$-minimal if and only if $f^m$ is $\mu$-minimal.
\end{corollary}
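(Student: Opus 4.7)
The plan is to reduce the claim to the numerical criterion for $\mu$-minimality supplied by Proposition \ref{minimal}: if $\phi$ is a key polynomial of minimal degree $n$, then a non-zero $f \in \kx$ is $\mu$-minimal if and only if $\deg(f) = s'_\phi(f)\,n$. Both quantities on this equation behave multiplicatively under the $m$-th power map, trivially for $\deg$ and by Corollary \ref{sprime+} for $s'_\phi$, so the equation is preserved when one replaces $f$ by $f^m$, and the desired equivalence drops out.

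Before invoking Proposition \ref{minimal} I would dispose of two degenerate situations. If $f$ is a non-zero constant, then so is $f^m$, and both are trivially $\mu$-minimal (the defining condition is vacuous for polynomials of degree zero). If instead $\kpm = \emptyset$, then by Corollary \ref{lmn} every non-zero homogeneous element of $\ggm$ is a unit; in particular $\hm(f)$ divides $\hm(c)$ for every non-zero constant $c$, which, when $\deg(f) > 0$, shows that $f$ is not $\mu$-minimal. The same reasoning applies to $f^m$, so the biconditional holds vacuously in this case.

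Thus I may assume $\deg(f) > 0$ and fix a key polynomial $\phi$ of minimal degree $n$. Applying Proposition \ref{minimal} to both $f$ and $f^m$, the claim reduces to
\[
\deg(f) = s'_\phi(f)\,n \iff m\deg(f) = m\,s'_\phi(f)\,n,
\]
which is immediate. I do not foresee a serious obstacle; the only subtlety is the need to treat the case $\kpm = \emptyset$ separately, since Proposition \ref{minimal} presupposes the existence of a key polynomial of minimal degree.
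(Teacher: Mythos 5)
Your proof is correct and follows essentially the same route as the paper: reduce to condition (2) of Proposition \ref{minimal}, $\deg(f)=s'_\phi(f)\,n$, and use the multiplicativity $s'_\phi(f^m)=m\,s'_\phi(f)$ from Corollary \ref{sprime+}. The extra discussion of constants and of the case $\kpm=\emptyset$ is correct but not needed in the paper, which states the corollary in the setting where a key polynomial of minimal degree exists.
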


\begin{proof}
Let $\phi$ be a key polynomial of minimal degree. By Corollary \ref{sprime+}, $s'_\phi(f^m)=ms'_\phi(f)$. Hence, condition (2) of Proposition \ref{minimal} is equivalent for $f$ and $f^m$.
\end{proof}\bs

As another consequence of the criterion for $\mu$-minimality, we may introduce an important numerical invariant of a valuation on $\kx$ admitting key polynomials.

\begin{theorem}\label{bound}
Let $\phi$ be a key polynomial of minimal degree for a valuation $\mu$ on $\kx$.
Then, for any monic non-constant $f\in \kx$ we have
$$
\mu(f)/\deg(f)\le C(\mu):=\mu(\phi)/\deg(\phi),
$$
and equality holds if and only if $f$ is $\mu$-minimal.
\end{theorem}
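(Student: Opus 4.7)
The plan is to reduce the inequality $n\,\mu(f)\le d\,\mu(\phi)$ (where $d=\deg f$) to the comparison $\mu(f^n)\le\mu(\phi^d)$, using the multiplicativity of $\mu$. The point is that raising to these powers makes $f^n$ and $\phi^d$ into monic polynomials of the same degree $dn$, which rigidly determines the top of the $\phi$-expansion of $f^n$.

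Concretely, write the $\phi$-expansion $f^n=\sum_{s} c_s\phi^s$ with $\deg(c_s)<n$, and let $\ell$ be the largest index with $c_\ell\ne 0$. Since the degrees of the summands $c_s\phi^s$ lie in the disjoint intervals $[sn,(s+1)n-1]$, one has $\deg(f^n)=\deg(c_\ell)+\ell n=dn$; combined with $\deg(c_\ell)<n$ this forces $\ell=d$ and $\deg(c_d)=0$, and comparing leading coefficients of $f^n$ and $\phi^d$ forces $c_d=1$. Thus
$$
f^n=\phi^d+\sum_{s=0}^{d-1}c_s\phi^s.
$$
Applying Proposition~\ref{minimal0}(2) to this expansion now gives $\mu(f^n)=\Min\{\mu(c_s\phi^s)\mid 0\le s\le d\}\le\mu(\phi^d)=d\,\mu(\phi)$, which is the required bound.

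For the equality case, $\mu(f^n)=d\,\mu(\phi)$ holds exactly when the top index $s=d$ lies in $I_\phi(f^n)$, i.e.\ when $s'_\phi(f^n)=d$. Since $\deg(f^n)=dn$, this is the criterion $\deg(f^n)=s'_\phi(f^n)\,n$ of Proposition~\ref{minimal}(2), so it is equivalent to $f^n$ being $\mu$-minimal, and by Corollary~\ref{minimalpower} to $f$ itself being $\mu$-minimal.

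I do not foresee any serious obstacle: once one commits to comparing $f^n$ and $\phi^d$, the proof follows mechanically from the uniqueness of the $\phi$-expansion, Propositions~\ref{minimal0} and~\ref{minimal}, and Corollary~\ref{minimalpower}. The only conceptual move is noticing that $n$-th powers normalize $f$ to have the same degree as a power of $\phi$, which pins down the leading $\phi$-expansion coefficient of $f^n$ as $1$ and thereby makes Proposition~\ref{minimal0}(2) deliver the bound directly.
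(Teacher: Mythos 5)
Your proof is correct and follows essentially the same route as the paper: it compares $f^{\deg(\phi)}$ with $\phi^{\deg(f)}$, notes that monicity pins the top coefficient of the $\phi$-expansion of $f^{\deg(\phi)}$ to be $1$ at index $\deg(f)$, applies Proposition~\ref{minimal0}(2) for the inequality, and handles equality via Proposition~\ref{minimal} together with Corollary~\ref{minimalpower}. The only difference is cosmetic: you invoke criterion (2) of Proposition~\ref{minimal} where the paper effectively uses criterion (3), and you spell out the degree bookkeeping that the paper leaves implicit.
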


\begin{proof}
Since $\phi$ and $f$ are monic, we may write
$$
f^{\deg(\phi)}=\phi^{\deg(f)}+h,\qquad \deg(h)<\deg(\phi)\deg(f).
$$
By Proposition \ref{minimal0}, $\mu\left(f^{\deg(\phi)}\right)\le \mu\left(\phi^{\deg(f)}\right)$, or equivalently, $\mu(f)/\deg(f)\le C(\mu)$.

By Proposition \ref{minimal}, equality holds if and only if $f^{\deg(\phi)}$ is $\mu$-minimal, and this is equivalent to $f$ being $\mu$-minimal, by Corollary \ref{minimalpower}. 
\end{proof}

\section{Structure of $\dm$ as a $k_v$-algebra}\label{secDelta}
In this section we determine the structure of $\dm$ as a $k_v$-algebra, and we derive some specific information about the extension $\km/\kv$.

In section \ref{subsecIncomm}, we deal with the case $\mu/v$ incommensurable. We show that $\dm=\km$. In this case, all key polynomials have the same degree and they are all $\mu$-equivalent.

In section \ref{subsecKPempty}, we assume that $\mu$ admits no key polynomials. We have again  $\dm=\km$. Also, we find several characterizations of the condition $\kpm=\emptyset$.  

In section \ref{subsecComm}, we deal with the case $\mu/v$ commensurable and $\kpm\ne\emptyset$, which corresponds to the classical situation in which $\mu$ is \emph{residually transcendental}.

In this case, $\dm$ is isomorphic to a polynomial ring in one indeterminate with coefficients in $\kal$, the algebraic closure of $\kv$ in $\km$. 

\subsection{Case $\mu/v$ incommensurable}\label{subsecIncomm}

We recall that $\mu/v$ incommensurable means $\qg_v\subsetneq \qgm$, or equivalently, $\op{rr}(\gm/\gv)>0$.

\begin{lemma}\label{s}
Suppose $\mu/v$ is incommensurable. Let $\phi\in\kx$ be a monic polynomial of minimal degree $n$ satisfying $\mu(\phi)\not\in\qg_v$. 
Then, for all $f,g\in\kx$ we have:
\begin{enumerate}
\item $f\smu a\phi^{s(f)}$, for some $a\in\kx_n$ and some unique integer $s(f)\ge0$.
\item $\phi$ is a key polynomial and $s(f)=s_\phi(f)=s'_\phi(f)$ for all $f\in\kx$.
\item $f$ is a $\mu$-unit if and only if $s(f)=0$.  
\item $f\mmu g$  if and only if $s(f)\le s(g)$.
\item $f$ is $\mu$-irreducible if and only if $s(f)=1$.
\item $f$ is $\mu$-minimal if and only if $\deg(f)=s(f)n$.
\end{enumerate}
\end{lemma}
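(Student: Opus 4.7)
The plan is to first establish, using the Abhyankar inequality and the embedding $\dm\hookrightarrow\km$, that $\phi$ is a key polynomial of minimal degree with $\hm(\phi)$ transcendental over $\ggv$. From there, incommensurability will force the $\phi$-expansion of any $f$ to have a \emph{unique} term of minimal $\mu$-value, and this singleton structure will make items (1)--(6) follow almost mechanically.

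The first key step is the dichotomy: for any non-zero $a\in\kx$, $\hm(a)$ is algebraic over $\ggv$ if and only if $\mu(a)\in\qg_v$. Incommensurability combined with inequality (\ref{ftalineq}) forces $\op{rr}(\gm/\gv)=1$ and $\op{tr.deg}(\km/\kv)=0$, so the inclusion $\dm\hookrightarrow\km$ shows $\dm$ is algebraic over $\kv$. The ``if'' direction then comes by rescaling $\hm(a)^e$ by $H_v(b)^{-1}$ to land in $\dm$, where algebraicity is automatic; the ``only if'' direction observes that any homogeneous algebraic equation of $\hm(a)$ forces some positive multiple of $\mu(a)$ into $\gv$. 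By the minimality of $n$, every non-zero $a\in\kx_n$ satisfies $\mu(a)\in\qg_v$, so $\hm(a)$ is algebraic---hence a unit in $\ggm$ by (\ref{xi})---while $\hm(\phi)$ is transcendental over $\ggv$ with $n$ minimal for this property. Theorem \ref{phimindeg} then identifies $\phi$ as a key polynomial, and Corollary \ref{lmn} says $n$ is the minimal key-polynomial degree.

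Next I would prove (1) and (2) together. For a non-zero $f\in\kx$ with $\phi$-expansion $f=\sum f_s\phi^s$, Proposition \ref{minimal0} gives $\mu(f)=\Min_s\mu(f_s\phi^s)$. If two distinct indices $s,t$ attained this minimum, the equation $\mu(f_s)-\mu(f_t)=(t-s)\mu(\phi)$ would place $\mu(\phi)$ in $\qg_v$ (the left-hand side being in $\qg_v$ by Step 1), contradicting $\mu(\phi)\notin\qg_v$. Hence $I_\phi(f)=\{s(f)\}$ is a singleton, and Lemma \ref{sphi} yields $f\smu f_{s(f)}\phi^{s(f)}$ together with $s_\phi(f)=s'_\phi(f)=s(f)$, settling (1) and (2) simultaneously; uniqueness of the integer $s(f)$ is intrinsic, being $s_\phi(f)$.

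The remaining items are then straightforward consequences. For (3), Proposition \ref{smallunits} equates $\hm(f)\in\ggm^*$ with $s_\phi(f)=s'_\phi(f)=0$. For (4), item (1) rewrites $\hm(f)=\hm(f_{s(f)})\hm(\phi)^{s(f)}$ with $\hm(f_{s(f)})$ a unit, and similarly for $g$, so $\hm(f)\mid\hm(g)$ reduces to $s(f)\le s(g)$ (using that $\ggm$ is a domain and $\hm(\phi)$ is not a unit). For (5), $\hm(f)\ggm=\hm(\phi)^{s(f)}\ggm$ is a non-zero prime ideal exactly when $s(f)=1$, because $\phi$ is $\mu$-irreducible. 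For (6), the identity $s'_\phi(f)=s(f)$ combines with Proposition \ref{minimal} to give the criterion. The sole genuine obstacle is the dichotomy of Step 1; once the singleton $I_\phi(f)$ is established, everything else is bookkeeping.
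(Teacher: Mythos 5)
Your proof is correct, but it reaches the conclusion by a genuinely different route than the paper. The paper proves item (1) first, with no prior knowledge that $\phi$ is a key polynomial: since $\mu(a)\in\qg_v$ for every non-zero $a\in\kx_n$ (minimality of $n$, after factoring out the leading coefficient) while $\mu(\phi)\notin\qg_v$, the monomials of any $\phi$-expansion have pairwise distinct $\mu$-values, so $f\smu f_s\phi^s$ for the minimal one; $\mu$-minimality of $\phi$ then follows from Proposition \ref{minimal0}, $\mu$-irreducibility is deduced directly from item (1) (if $\phi\nmu f$ and $\phi\nmu g$, then $fg\smu c$ with $c\in\kx_n$ by another value count, hence $\phi\nmu fg$), and item (3) is settled by a B\'ezout identity $ab+d\phi=1$ together with the impossibility of $ab\smu d\phi$. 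You instead first convert the hypothesis $\mu(\phi)\notin\qg_v$ into transcendence of $\hm(\phi)$ over $\ggv$ via the dichotomy ``$\hm(a)$ algebraic over $\ggv$ iff $\mu(a)\in\qg_v$'', whose ``if'' half needs the Abhyankar inequality (\ref{ftalineq}) and the embedding $\dm\hookrightarrow\km$ --- exactly the argument the paper postpones to Theorem \ref{mainincomm} and Theorem \ref{kpempty}; then Theorem \ref{phimindeg} and Corollary \ref{lmn} identify $\phi$ as a key polynomial of minimal degree, the same distinct-value computation gives the singleton $I_\phi(f)$, and items (3)--(6) are delegated to Propositions \ref{smallunits} and \ref{minimal}. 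Both arguments are valid and yours is non-circular, since everything you invoke precedes Lemma \ref{s}; the paper's proof is leaner and self-contained, avoiding (\ref{ftalineq}) and most of the section-3 machinery, while yours makes the transcendence of $\hm(\phi)$ explicit from the outset and lets the general theory do the bookkeeping. The only step you elide is the passage from $\hm(a)^e$ algebraic over $\ggv$ to $\hm(a)$ algebraic; this is immediate (substitute $y^e$ into a homogeneous equation, or quote Lemma \ref{alg} as the paper does in Theorem \ref{kpempty}), so it is a compression, not a gap.
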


\begin{proof}
Consider the $\phi$-expansion $f=\sum_{0\le s}f_s\phi^s$, with $f_s\in\kx_n$ for all $s$.

All monomials have diferent $\mu$-value. In fact, an equality
$$
\mu\left(f_s\phi^s\right)=\mu\left(f_t\phi^t\right) \imp \mu(f_s)-\mu(f_t)=(t-s)\mu(\phi), 
$$
is possible only for $s=t$ because $\mu(\phi)\not\in\qg_v$, and $\mu(f_s),\mu(f_t)$ belong to $\qg_v$ by the minimality of $n$. 

Hence, $f\smu a\phi^s$ for the monomial of least $\mu$-value. The unicity of $s$ follows from the same argument as above. This proves (1).\e

By Proposition \ref{minimal0}, $\phi$ is $\mu$-minimal. Let us show that $\phi$ is $\mu$-irreducible.

Consider $f,g\in\kx$ such that $\phi\nmu f$, $\phi\nmu g$. By item (1), $f\smu a$, $g\smu b$, for some $a,b\in \kx_n$; in particular, $$\mu(fg)=\mu(ab)=\mu(a)+\mu(b)\in\qg_v,$$ by the minimality of $n$. By item (1), $fg\smu c\phi^s$ for some $c\in\kx_n$ and an integer $s\ge0$. Since $\mu(\phi)\not\in\qg_v$, the condition $s\mu(\phi)=\mu(fg)-\mu(c)\in\qg_v$ leads to $s=0$, so that $fg\smu c$. Since $\phi$ is $\mu$-minimal, we have $\phi\nmu fg$.
Hence, $\phi$ is $\mu$-irreducible. 

Once we know that $\phi$ is a key polynomial, item (1) implies $I_\phi(f)=\{s(f)\}$ for all $f\in\kx$. Thus, $s(f)=s_\phi(f)=s'_\phi(f)$. This proves (2).\e

If $f$ is a $\mu$-unit, then $\phi\nmid_\mu f$, so that $s(f)=0$.

Conversely, if $s(f)=0$, then $\hm(f)=\hm(a)$ for some $a\in\kx_n$. Since 
the polynomials  $a$ and $\phi$ are coprime, they satisfy a B\'ezout identity
$$
ab+\phi\, d=1,\qquad \deg(b)<n,\quad \deg(d)<\deg(a)<n.
$$
Clearly, $ab=1-d\phi$ is the canonical $\phi$-expansion of $ab$. Since we cannot have $ab\smu d\phi$, because $\mu(\phi)\not\in\qg_v$, we must have $ab\smu 1$. This proves (3). \e

The rest of statements follow easily from (1), (2) and (3).
\end{proof}\bs



In particular, all results of the last section apply to our key polynomial $\phi$, because it is a key polynomial of minimal degree.

\begin{theorem}\label{mainincomm}
Suppose $\mu/v$ incommensurable. Let $\phi\in\kx$ be a monic polynomial of minimal degree $n$ satisfying $\mu(\phi)\not\in\qg_v$. Then,
\begin{enumerate}
\item $\phi$ is a key polynomial for $\mu$.
\item All key polynomials have degree $n$, and are $\mu$-equivalent to $\phi$. More precisely,
$$\kpm=\left\{\phi+a\mid a\in \kx_n,\ \mu(a)>\mu(\phi)\right\}.
$$
In particular, $\ggm$ has a unique homogeneous prime ideal $\hm(\phi)\ggm$. 
\item The natural inclusions determine equalities \
$\kal=\dm=\km$.
\item $\rr(\phi)=0$, and $\km$ is a finite extension of $\kv$, isomorphic to $k_\phi$.
\end{enumerate}
\end{theorem}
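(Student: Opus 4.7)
Part~(1) is immediate from Lemma~\ref{s}(2). For Part~(2), let $\chi$ be any key polynomial; Lemma~\ref{s}(5)--(6) force $s_\phi(\chi)=1$ and $\deg(\chi)=n$. Since $\phi\mmu\chi$ with both monic of degree $n$, Lemma~\ref{mid=sim} gives $\chi\smu\phi$, so $a:=\chi-\phi$ lies in $\kx_n$ with $\mu(a)>\mu(\phi)$, proving one inclusion. Conversely, any $\chi=\phi+a$ of this form satisfies $\hm(\chi)=\hm(\phi)$, hence is $\mu$-irreducible; and every non-zero element of $\kx_n$ is a $\mu$-unit by Lemma~\ref{s}(3), so it cannot be $\mu$-divisible by the non-unit $\chi$, making $\chi$ $\mu$-minimal. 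Uniqueness of the homogeneous prime $\hm(\phi)\ggm$ is immediate from Lemma~\ref{s}(1), which writes every non-zero homogeneous element of $\ggm$ as a unit times a power of $\hm(\phi)$.

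For Part~(3), I argue $\dm=\kal$ and $\km=\dm$ in turn. Any non-zero $\hm(c)\in\dm$ has $\mu(c)=0$; by Lemma~\ref{s}(1), $c\smu c_0\phi^s$ with $c_0\in\kx_n$ and $s\ge 0$, and the relation $\mu(c_0)=-s\mu(\phi)$, together with $\mu(c_0)\in\qg_v$ (minimality of $n$) and $\mu(\phi)\notin\qg_v$, forces $s=0$; hence $\hm(c)=\hm(c_0)\in\kal$. Now take a non-zero class $\bar h\in\km$ and write $h=f/g$ with $\mu(f)=\mu(g)$. Lemma~\ref{s}(1) yields $f\smu a\phi^s$, $g\smu b\phi^t$ with $a,b\in\kx_n\setminus\{0\}$, and the same incommensurability argument forces $s=t$ and $\mu(a)=\mu(b)$. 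Expanding $fb-ga=(f-a\phi^s)b+a(\phi^s b-g)$ and applying both $\mu$-equivalences gives $\mu(fb-ga)>\mu(fb)=\mu(gb)$, i.e.\ $f/g\equiv a/b\pmod{\m_\mu}$. Since $\hm(a)\hm(b)^{-1}\in\dm=\kal$, one has $\hm(a)=\hm(cb)$ for some $c\in\kx_n$ with $\mu(c)=0$, so $a\smu cb$ gives $a/b\equiv c\pmod{\m_\mu}$, and therefore $\bar h=\bar c$ is the image of $\hm(c)\in\kal$ under $\kal\hookrightarrow\km$.

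Part~(4) then follows quickly. Proposition~\ref{maxideal} identifies $\rr(\phi)$ with the kernel of the canonical map $\dm\twoheadrightarrow k_\phi$, while Proposition~\ref{maxsubfield} says its restriction to $\kal$ is an isomorphism; since $\dm=\kal$ by~(3), the whole map is already an isomorphism, giving $\rr(\phi)=0$ and $\km\cong k_\phi$. Finiteness of $k_\phi/\kv$ is the fundamental inequality for $v_\phi$ on the degree-$n$ extension $K_\phi/K$. I expect the main obstacle to be the fraction manipulation in Part~(3): one must translate the two $\mu$-equivalences for $f$ and $g$ into a congruence modulo $\m_\mu$ for the fraction $f/g$, invoking incommensurability at two separate points---once to force the exponents of $\phi$ to agree, and once to identify $\dm$ with $\kal$ so that $a/b$ can be replaced by a polynomial $c\in\kx_n$.
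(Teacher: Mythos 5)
Your proof is correct, and for the most part it follows the paper's own route: parts (1) and (2) are obtained exactly as in the paper from Lemma \ref{s} together with Lemma \ref{mid=sim} (you additionally spell out the converse inclusion $\{\phi+a\}\subset\kpm$, which the paper leaves implicit), and the surjectivity of $\dm\hookrightarrow\km$ in part (3) uses the same reduction of a fraction $f/g$ to $a/b$ with $a,b\in\kx_n$ via Lemma \ref{s}(1). The one genuinely different step is the identification $\kal=\dm$: the paper invokes the inequality (\ref{ftalineq}) to conclude that $\km/\kv$ is algebraic, hence that the domain $\dm$ is a field and therefore coincides with its maximal subfield $\kal$, whereas you argue directly from Lemma \ref{s}(1) that any non-zero degree-zero element $\hm(c)$ satisfies $c\smu c_0\phi^s$ with $s\mu(\phi)=-\mu(c_0)\in\qg_v$, forcing $s=0$ and $\hm(c)=\hm(c_0)\in\kal$. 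Your variant is more self-contained (it avoids the Abhyankar-type inequality and does not need to know beforehand that $\km/\kv$ is algebraic), at the cost of repeating the incommensurability trick; the paper's version is shorter and reuses a standard fact. Likewise in part (4) the paper gets $\rr(\phi)=0$ simply because the kernel of the onto map $\dm\twoheadrightarrow k_\phi$ from a field is trivial, while you route through Proposition \ref{maxsubfield}; both work. One small point worth making explicit: your appeals to the description (\ref{defkal}) of $\kal$ and to Proposition \ref{maxsubfield} require $\phi$ to be a key polynomial of \emph{minimal} degree, which indeed holds here since every non-zero element of $\kx_n$ is a $\mu$-unit (so no key polynomial of degree smaller than $n$ can exist), exactly as your part (2) argument already shows.
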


\begin{proof}
By items (2), (5) and (6) of Lemma \ref{s}, $\phi$ is a key polynomial for $\mu$, all key polynomials have degree $n$, and are all $\mu$-divisible by $\phi$. By Lemma \ref{mid=sim}, they are $\mu$-equivalent to $\phi$.
This proves (1) and (2).\e


Since $\op{rr}(\gm/\gv)>0$, the inequality in equation (\ref{ftalineq}) shows that $\km/\kv$ is an algebraic extension. Since $\kv\subset \dm\subset \km$, the ring $\dm$ must be a field. In particular, $\kal=\dm$ by the remarks preceding Proposition \ref{maxsubfield}.

Let us show that $\dm=\km$. An element in $\km^*$ is of the form
$$(g/h)+\m_\mu\in \km^*,$$ with $g,\,h\in\kx$ such that $\mu(g/h)=0$.

By item (1) of Lemma \ref{s}, from $\mu(g)=\mu(h)$ we deduce that
$$
g\smu a\phi^s,\qquad h\smu b\phi^s,
$$
for certain integer $s\ge0$ and polynomials $a,b\in\kx_n$ such that $\mu(a)=\mu(b)$. Thus,
$$
\dfrac gh+\m_\mu=\dfrac{a\phi^s}{b\phi^s}+\m_\mu=\dfrac{a}{b}+\m_\mu.
$$
By Lemma \ref{s}, $\hm(a)$, $\hm(b)$ are units in $\ggm$, so that $\hm(a)/\hm(b)\in\dm^*$ is mapped to $(g/h)+\m_\mu$ under the embedding $\dm\hookrightarrow \km$. This proves (3).\e

By Proposition \ref{maxideal}, $\rr(\phi)$ is the kernel of the onto mapping $\dm\twoheadrightarrow k_\phi$. Since $\dm$ is a field, $\rr(\phi)=0$ and this mapping is an isomorphism. 

This ends the proof of (4), because $k_\phi/\kv$ is a finite extension.
\end{proof}

\subsection{Valuations not admitting key polynomials}\label{subsecKPempty}

\begin{theorem}\label{deltaKPempty}
If $\kpm=\emptyset$, the canonical embedding $\dm\hookrightarrow \km$  is an isomorphism.
\end{theorem}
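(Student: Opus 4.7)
The plan is to observe that injectivity of $\dm\hookrightarrow\km$ is automatic (since $\pset_0^+=\pset_0\cap\m_\mu$), so the real content is surjectivity. An arbitrary nonzero element of $\km$ has the form $(g/h)+\m_\mu$ with $g,h\in\kx$ and $\mu(g)=\mu(h)$, and I need to lift it to some $a+\pset_0^+\in\dm$ with $\mu(a)=0$ and $\mu(g/h-a)>0$. Rewriting, this amounts to finding $a\in\kx$ with $g\smu ah$, i.e., showing that $h\mid_\mu g$ in the strong form $g\smu ah$.

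The key input is Corollary \ref{lmn}: the hypothesis $\kpm=\emptyset$ is equivalent to $\ggalg=\ggm$, i.e., every nonzero homogeneous element of $\ggm$ is algebraic over $\ggv$, and hence by (\ref{xi}) is a unit in $\ggm$. Apply this to $\hm(h)$: since $h\ne 0$, there exists $f\in\kx$ with $\hm(f)\hm(h)=1$, equivalently $fh\smu 1$. Set $a:=gf$. Then $ah=gfh\smu g$, which gives $\mu(g-ah)>\mu(g)$; in particular $\mu(ah)=\mu(g)=\mu(h)$, so $\mu(a)=0$, i.e., $a\in\pset_0$. Dividing by $h$, we get $\mu(g/h-a)>0$, so $(g/h)+\m_\mu=a+\m_\mu$ is the image of $a+\pset_0^+\in\dm$.

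I don't expect any genuine obstacle here: once Corollary \ref{lmn} is invoked, the argument is purely formal manipulation with $\smu$ and $\hm$. The only minor subtlety is confirming that the resulting $a$ actually lies in $\pset_0$ (rather than just in $\kx$), which is forced by the equality $\mu(ah)=\mu(h)$ and is handled in one line.
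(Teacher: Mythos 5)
Your proposal is correct and follows essentially the same route as the paper: both reduce surjectivity to Corollary \ref{lmn}, which under $\kpm=\emptyset$ makes $\hm(h)$ a unit, and then exhibit the degree-zero element $\hm(g)\hm(h)^{-1}$ (which you realize concretely as $\hm(gf)$ with $fh\smu 1$) as a preimage of $(g/h)+\m_\mu$. Your version merely spells out the verification that the representative $a=gf$ lies in $\pset_0$ and maps to the right class, which the paper leaves implicit.
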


\begin{proof}
An element in $\km^*$ is of the form
$$(f/g)+\m_\mu\in \km^*,\qquad f,g\in\kx,\quad \mu(f/g)=0.$$

If $\kpm=\emptyset$, then Corollary \ref{lmn} shows that $\hm(f)$ and $\hm(g)$ are units in $\ggm$. Hence, $\hm(f)\hm(g)^{-1}$ is an element in $\dm$ whose image in $\km$ is $(f/g)+\m_\mu$.  
\end{proof}\e

\begin{theorem}\label{kpempty}
Let $\mu$ be a valuation on $\kx$ extending $v$. The following conditions are equivalent.
\begin{enumerate}
\item $\kpm=\emptyset$.
\item $\ggm$ is algebraic over $\ggv$.
\item Every non-zero homogeneous element in $\ggm$ is a unit.
\item $\mu/v$ is commensurable and $\km/\kv$ is algebraic.
\item The set of weighted values
$$W=\left\{\mu(f)/\deg(f)\mid f\in\kx\setminus K\ \mbox{monic}\right\}
$$
does not contain a maximal element.
\end{enumerate}
\end{theorem}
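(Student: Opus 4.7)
The plan is to establish the five equivalences via the cycle $(1)\Leftrightarrow(2)\Leftrightarrow(3)$, then $(2)\Leftrightarrow(4)$, and finally $(1)\Leftrightarrow(5)$. The first three equivalences are almost immediate from Corollary \ref{lmn}: the three coincident invariants $m,\ell,n$ there are the minimal degrees associated with (1), (3), and (2), respectively, since every non-zero homogeneous element of $\ggm$ has the form $\hm(f)$ (so $\ggm$ being algebraic over $\ggv$ is exactly the non-existence of $n$).

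For $(2)\Rightarrow(4)$, commensurability follows by comparing degrees in any homogeneous algebraic equation for $\hm(f)$: two non-zero terms of equal total degree force $(j-i)\mu(f)\in\gv$, so $\mu(f)\in\qg_v$ for every $f$. For algebraicity of $\km/\kv$, Theorem \ref{deltaKPempty} identifies $\km$ with $\dm$ (applicable since (1) holds), and a homogeneous algebraic equation for an element of $\dm$, which has degree zero, normalizes upon dividing through by the leading unit coefficient to one whose coefficients lie in $\kv$. Conversely, for $(4)\Rightarrow(3)$, given a non-zero $\hm(f)$ of degree $\alpha$, commensurability provides $e\ge 1$ and $a\in K^*$ with $v(a)=e\alpha$, so $\hm(f^e/a)$ is a degree-zero element of $\dm\hookrightarrow\km$; by hypothesis its image is algebraic over $\kv\subset\ggv$, and lifting the relation and clearing denominators through $\hm(a)$ yields an algebraic equation for $\hm(f)$ over $\ggv$, making it a unit by (\ref{xi}).

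Finally, for $(1)\Leftrightarrow(5)$: the direction $(5)\Rightarrow(1)$ is the contrapositive of Theorem \ref{bound}, which already identifies $C(\mu)=\mu(\phi)/\deg(\phi)$ as the maximum of $W$ when $\kpm\neq\emptyset$. For $(1)\Rightarrow(5)$, given monic $f\in\kx\setminus K$ of degree $d$ and $\alpha=\mu(f)$, use (4) to pick $e,a$ as above, take the minimal polynomial $Y^r+\bar c_{r-1}Y^{r-1}+\cdots+\bar c_0$ of $(f^e/a)+\m_\mu$ over $\kv$, lift the coefficients to $c_i\in\oo_v$, and form
$$
g := f^{er} + c_{r-1}\,a\,f^{e(r-1)} + c_{r-2}\,a^2\,f^{e(r-2)} + \cdots + c_0\,a^r,
$$
which is monic of degree $erd$. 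The algebraic relation in $\km$ gives $\mu(g/a^r)>0$, i.e., $\mu(g)>\mu(f^{er})$, so $\mu(g)/\deg(g)>\mu(f)/d$, and $W$ admits no maximum. The main obstacle is the degree-zero reduction linking (2) and (4); the essential tool is Theorem \ref{deltaKPempty}, which turns the canonical embedding $\dm\hookrightarrow\km$ into an isomorphism precisely in the regime $\kpm=\emptyset$, allowing algebraic information to be transferred freely between $\ggm$ and $\km$.
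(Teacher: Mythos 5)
Your proposal is correct, and it reaches the equivalences by a slightly different decomposition than the paper. The paper also gets $(1)\Leftrightarrow(2)\Leftrightarrow(3)$ from Corollary \ref{lmn} and $(5)\Rightarrow(1)$ from Theorem \ref{bound}, but it closes the loop as $(1)\Rightarrow(4)\Rightarrow(5)$: commensurability in $(1)\Rightarrow(4)$ is obtained by contraposition from Theorem \ref{mainincomm} (incommensurable valuations always admit key polynomials), algebraicity of $\km/\kv$ from Theorems \ref{phimindeg} and \ref{deltaKPempty}, and $(4)\Rightarrow(5)$ by showing $\hm(\phi)$ is integral over $\ggv$, invoking Lemma \ref{alg}, and lifting a monic homogeneous integral equation to a monic polynomial of larger weighted value. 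You instead prove commensurability in $(2)\Rightarrow(4)$ directly, by comparing degrees of two nonzero terms in a homogeneous algebraic relation for $\hm(f)$ (legitimate, since $\ggm$ is a domain, so such a relation has at least two nonzero terms, forcing $(j-i)\mu(f)\in\gv$), which makes Theorem \ref{mainincomm} unnecessary; you add a direct $(4)\Rightarrow(3)$ step (pass to the degree-zero element $\hm(a^{-1}f^e)$, use algebraicity of its residue over $\kv$, lift and clear $\hm(a)$, then apply (\ref{xi})); and your $(1)\Rightarrow(5)$ construction lifts the minimal polynomial of the residue of $f^e/a$ to the explicit monic polynomial $g$, rather than passing through integrality of the homogeneous element as the paper does --- the two lifts produce the same kind of witness, but yours stays at the level of residue fields and avoids Lemma \ref{alg}. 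The net effect is a marginally more self-contained argument (no appeal to the incommensurable case analysis of Theorem \ref{mainincomm}), at the cost of redoing in $(4)\Rightarrow(3)$ and $(1)\Rightarrow(5)$ some graded-algebra bookkeeping (identifying which terms of value exactly $er\alpha$ contribute to the vanishing initial form) that the paper delegates to its earlier structural results; both routes are sound.
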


\begin{proof}
 By Corollary \ref{lmn}, conditions (1), (2) and (3) are equivalent.\e
 
Let us show that (1) implies (4). If $\kpm=\emptyset$, then $\mu/v$ is commensurable by Theorem \ref{mainincomm}, and $\km/\kv$ is algebraic by Theorems \ref{phimindeg} and \ref{deltaKPempty}.\e

Let us now deduce (5) from (4). Take $\phi$ an arbitrary monic polynomial in $\kx\setminus K$. Let us show that $\mu(\phi)/\deg(\phi)\in W$ is not an upper bound for this set.

Since $\qg_v=\qgm$, there exists a positive integer $e$ such that $e\mu(\phi)\in\qg_v$. Thus, there exists $a\in K^*$ such that $\mu(a\phi^e)=0$, so that $\hm(a\phi^e)\in\km^*$.

By hypothesis, this element is algebraic over $\kv$. Hence, $\hm(\phi^e)$ is algebraic over $\ggv$, and Lemma \ref{alg} shows that $\hm(\phi)$ is algebraic over $\ggv$. As mentioned in (\ref{xi}),  $\hm(\phi)$ is integral over $\ggv$. Consider an homogeneous equation
\begin{equation}\label{eqn}
\ep_0+\ep_1\hm(\phi)+\cdots+\ep_{m-1}\hm(\phi)^{m-1}+\hm(\phi)^m=0,
\end{equation}
with $\ep_0\,\dots,\ep_m$ homogeneous elements in $\ggv$ such that $\deg(\ep_i\hm(\phi)^i)=m\mu(\phi)$ for all indices $0\le i< m$ for which $\ep_i\ne0$.

By choosing $a_i\in K$ with $\hm(a_i)=\ep_i$ for all $i$, equation (\ref{eqn}) is equivalent to
$$
\mu\left(a_0+a_1\phi+\cdots+a_{m-1}\phi^{m-1}+\phi^m\right)>\mu(\phi^m)=m\mu(\phi).
$$
Hence, this monic polynomial $f=a_0+a_1\phi+\cdots+a_{m-1}\phi^{m-1}+\phi^m$ has a larger weighted value
$$
\mu(f)/\deg(f)>\mu(\phi^m)/\deg(f)=\mu(\phi)/\deg(\phi).
$$
Hence, the set $W$ contains no maximal element.\e

Finally, the implication (5)$\imp$(1) follows from Theorem \ref{bound}. 
\end{proof}


\subsection{Case $\mu/v$ commensurable and $\kpm\ne\emptyset$}\label{subsecComm}

\begin{theorem}\label{resfield}
Suppose $\mu/v$ commensurable and $\kpm\ne\emptyset$. The canonical embedding $\dm\hookrightarrow \km$  induces an isomorphism between the field of fractions of $\dm$ and $\km$.
\end{theorem}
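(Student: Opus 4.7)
The embedding $\dm \hookrightarrow \km$ is injective and has codomain a field, so it extends uniquely to an injective map $\operatorname{Frac}(\dm) \hookrightarrow \km$. We only have to verify surjectivity, i.e.\ that every nonzero element of $\km$ is a quotient of two nonzero elements in the image of $\dm$.

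Pick an arbitrary $\alpha \in \km^*$ and write it as $\alpha = (f/g)+\m_\mu$ with $f,g \in \kx\setminus\{0\}$ satisfying $\mu(f)=\mu(g)=:\beta\in\gm$. Since $\mu/v$ is commensurable, the quotient $\gm/\gv$ is a torsion group, so there exists a positive integer $e$ with $e\beta\in\gv$, and hence a $c\in K^*$ with $v(c)=\mu(c)=-e\beta$. I now set
$$
h_1 \,=\, c\,f\,g^{e-1}, \qquad h_2\,=\,c\,g^e,
$$
so that $\mu(h_1)=-e\beta+\beta+(e-1)\beta=0$ and $\mu(h_2)=-e\beta+e\beta=0$. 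Therefore $h_1,h_2\in \pset_0\setminus\pset_0^+$ and the classes $\hm(h_1),\hm(h_2)\in\dm$ are nonzero.

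Under the canonical embedding $\dm \hookrightarrow \km$, each $\hm(h_i)$ maps to $h_i+\m_\mu$, which is a unit of $\km$ because $\mu(h_i)=0$. Since $h_1/h_2 = f/g$ in $K(x)$ by construction, in $\km$ we obtain
$$
\frac{h_1+\m_\mu}{h_2+\m_\mu} \,=\, \frac{h_1}{h_2}+\m_\mu \,=\, \frac{f}{g}+\m_\mu \,=\,\alpha.
$$
Thus $\alpha$ lies in the image of $\operatorname{Frac}(\dm)$, which completes the proof.

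The single conceptual move, and the only place where commensurability is used, is the passage from a representation $f/g$ of $\alpha$ (with $\mu(f),\mu(g)$ possibly lying outside $\gv$) to the representation $h_1/h_2$ with both $\mu$-values equal to zero; this is achieved by the symmetric trick of clearing the value $\beta$ via the auxiliary factor $cg^{e-1}$, which is made available exactly because $e\beta\in\gv$. The hypothesis $\kpm\neq\emptyset$ is not required here, but matches the setting of the subsection: otherwise Theorem \ref{deltaKPempty} gives the stronger equality $\dm=\km$ directly.
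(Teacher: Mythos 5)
Your proof is correct, and it takes a genuinely different route from the paper's. The paper fixes a key polynomial $\chi$ and invokes Lemma \ref{subgroup}, which says that the values of polynomials of degree less than $\deg(\chi)$ form a group $\gchi$ with $\gm=\gen{\gchi,\mu(\chi)}$; combined with commensurability this produces a single polynomial $h=a\chi^s$ with $\mu(h)=-\alpha$, and the target class is then hit by $\hm(hf)/\hm(hg)$. In other words, the paper's argument rests on the nontrivial fact that every element of $\gm$ is attained as the $\mu$-value of a polynomial, which is exactly where $\kpm\ne\emptyset$ enters. You avoid this entirely: commensurability alone gives $e\beta\in\gv$ and a constant $c\in K^*$ with $v(c)=-e\beta$, and the asymmetric pair $h_1=cfg^{e-1}$, $h_2=cg^e$ clears the values while keeping $h_1/h_2=f/g$. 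What your approach buys is economy and generality: it never uses key polynomials, Lemma \ref{subgroup}, or any of the machinery of Sections 2--3, and as you note it proves the statement for every commensurable extension $\mu/v$ (the case $\kpm=\emptyset$ being subsumed rather than excluded). What the paper's route buys is the auxiliary fact that $\gm$ is realized by polynomial values via a key polynomial of any degree, which is in the spirit of its later use of $\phi$-expansions; but for the theorem as stated your shorter argument is complete, and the small verifications it needs ($\dm$ a domain, $h_1,h_2\in\pset_0\setminus\pset_0^+$, compatibility of the quotient with reduction mod $\m_\mu$) are all in order.
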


\begin{proof}
Let $\chi\in\kx$ be an arbitrary key polynomial for $\mu$.

We must show that the induced morphism $\op{Frac}(\dm)\!\to\!\km$ is onto.

An element in $\km^*$ is of the form
$$(f/g)+\m_\mu\in \km^*,\qquad f,g\in\kx,\quad \mu(f/g)=0.$$

Set $\alpha=\mu(f)=\mu(g)\in\gm$. 
By Lemma \ref{subgroup}, $\gm=\gen{\gchi,\mu(\chi)}$; hence, we may write
$$
-\alpha=\beta+s\mu(\chi),\qquad \beta\in\gchi,\qquad s\in\Z.
$$
Since $\mu(\chi)\in\Q\gchi$, we may assume that $0\le s<e$ for some positive integer $e$. Take  $a\in \kxchi$ such that $\mu(a)=\beta$. Then, the polynomial $h=a\phi^s$ satisfies $\mu(h)=-\alpha$.

Thus, $\hm(hf),\hm(hg)$ belong to $\dm$ and the fraction $\hm(hf)/\hm(hg)$ is mapped to $(f/g)+\m_\mu$ by the morphism $\op{Frac}(\dm)\!\to\!\km$.
\end{proof}\bs

\begin{theorem}\label{Dstructure}
Suppose $\mu/v$ commensurable. Let $\phi$ be a key polynomial of minimal degree $n$, and let $e$ be a minimal positive integer such that $e\mu(\phi)\in\gphi$. 

Take $u\in\kx_n$ such that  $\mu(u\phi^e)=0$. Then, $\xi=\hm(u\phi^e)\in\dm$ is transcendental over $\kv$ and $\dm=\kal[\xi]$.  
\end{theorem}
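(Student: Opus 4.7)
The plan proceeds in three stages: verify that the data $(e,u,\xi)$ is well-defined, prove $\xi$ is transcendental over $\kv$, and show that every element of $\dm$ is a polynomial in $\xi$ with coefficients in $\kal$. Commensurability of $\mu/v$ forces $\gm/\gv$, and hence $\gm/\gphi$ (as $\gv\subseteq\gphi$), to be torsion, so the minimal positive integer $e$ with $e\mu(\phi)\in\gphi$ exists; then $-e\mu(\phi)\in\gphi$ yields a non-zero $u\in\kx_n$ with $\mu(u)=-e\mu(\phi)$, so $\mu(u\phi^e)=0$ and $\xi=\hm(u\phi^e)\in\dm$.

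For transcendence: since $\phi$ is a key polynomial of minimal degree, Corollary \ref{lmn} shows that $\hm(\phi)$ is transcendental over $\ggv$. If $\xi$ were algebraic over $\kv\subseteq\ggv$, then, combined with $\hm(u)$ being algebraic over $\ggv$ (same corollary, since $u\in\kx_n$) and the factorisation $\xi=\hm(u)\hm(\phi)^e$, one would get $\hm(\phi)^e$ algebraic over $\ggv$. Lemma \ref{alg} would then force $\hm(\phi)$ itself algebraic over $\ggv$, a contradiction.

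For the equality $\dm=\kal[\xi]$, the inclusion $\supseteq$ is immediate. Conversely, take $\eta=\hm(g)\in\dm$ with $\mu(g)=0$, and consider the $\phi$-expansion $g=\sum_s g_s\phi^s$. By Lemma \ref{sphi} together with iterated use of (\ref{Hmu}),
$$\hm(g)=\sum_{s\in I_\phi(g)}\hm(g_s)\hm(\phi)^s.$$
For each $s\in I_\phi(g)$, the equality $\mu(g_s\phi^s)=0$ gives $\mu(g_s)=-s\mu(\phi)$; since $g_s\in\kx_n$ forces $\mu(g_s)\in\gphi$, the minimality of $e$ forces $e\mid s$. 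Writing $s=em$ and using $\hm(\phi)^e=\xi\hm(u)^{-1}$, each term becomes $\bigl(\hm(g_s)\hm(u)^{-m}\bigr)\,\xi^m$; the coefficient is homogeneous of degree $0$ (both $\mu(g_s)$ and $m\mu(u)$ equal $-em\mu(\phi)$) and algebraic over $\ggv$ (both factors are, by Corollary \ref{lmn}), hence lies in $\kal$ by the characterisation in (\ref{defkal}).

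The central observation doing the real work is the divisibility $e\mid s$ for every $s\in I_\phi(g)$: it is what ensures that only integral powers of $\xi$, rather than arbitrary powers of $\hm(\phi)$, appear in the expansion of $\eta$. Everything else reduces to formal manipulation with homogeneous elements, using tools already established (Corollary \ref{lmn}, Lemma \ref{sphi}, Lemma \ref{alg}, and equation (\ref{xi})).
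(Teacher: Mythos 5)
Your proof is correct and follows essentially the same route as the paper: decompose $\hm(g)$ over the support $I_\phi(g)$ of its $\phi$-expansion, use minimality of $e$ (and the fact that $\gphi$ is a group) to get $e\mid s$, rewrite $\hm(\phi)^{em}$ as $\xi^m\hm(u)^{-m}$, and recognize the degree-zero unit coefficients as elements of $\kal$ via Proposition \ref{smallunits}/(\ref{defkal}). The only cosmetic difference is the transcendence step, where the paper simply notes $\xi$ is divisible by the prime $\hm(\phi)$ hence not a unit and invokes (\ref{xi}), while you argue through Lemma \ref{alg}; both are immediate.
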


\begin{proof}
The element $\xi$ is not a unit, because it is divisible by the prime element $\hm(\phi)$. By (\ref{xi}), $\xi$ is transcendental over $\kv$.\e

Consider $\hm(g)\in\dm$, for some  $g\in\kx$ with $\mu(g)=0$. Let $g=\sum_{0\le s}g_s\phi^s$  be the $\phi$-expansion of $g$. 

Let $I=I_\phi(g)$ be the set of indices $s$ such that  $\mu(a_s\phi^s)=0$. For each $s\in I$, the equality $s\mu(\phi)=-\mu(a_s)\in\gphi$ implies that $s=ej_s$ for some integer $j_s\ge0$.

By Proposition \ref{minimal0} and equation (\ref{Hmu}), 
\begin{equation}\label{usual}
g\smu \sum\nolimits_{s\in I}g_s\phi^s,\qquad\hm(g)=\sum\nolimits_{s\in I}\hm(g_s\phi^s).
\end{equation}

For each $s\in I$, Proposition \ref{smallunits} shows that $\hm(u)$ is a unit, and there exists $c_s\in\kx_n$ such that $\hm(c_s)=\hm(g_s)\hm(u)^{-j_s}$. Hence,
$$
\hm(g_s\phi^s)=\hm(g_s)\hm(u)^{-j_s}\hm(u)^{j_s}\hm(\phi^s)=\hm(c_s)\xi^{j_s}\in \kal[\xi].
$$
Hence, $\hm(g)\in \kal[\xi]$. This proves that $\dm=\kal[\xi]$.  
\end{proof}\bs

As a consequence of Theorems \ref{mainincomm}, \ref{deltaKPempty}, \ref{kpempty}, \ref{resfield} and \ref{Dstructure}, we obtain the following computation of the residue class field $\km$.

\begin{corollary}\label{kstructure}
If $\kpm=\emptyset$, then $\kal=\dm=\km$ is an algebraic extension of $k$.

If $\mu/v$ is incommensurable, then, $\kal=\dm=\km$ is a finite extension of $k$.

If $\mu/v$ is commensurable and  $\kpm\ne\emptyset$, then $\km\simeq\kal(y)$, where $y$ is an indeterminate.
\end{corollary}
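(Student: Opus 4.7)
The plan is to treat the three cases separately; they are pairwise disjoint and exhaustive, since Theorem~\ref{mainincomm}(1) guarantees that whenever $\mu/v$ is incommensurable one has $\kpm\neq\emptyset$. Each case is essentially a repackaging of theorems already proved in this section, so the argument reduces to citing the right results in the right order.

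For the first case ($\kpm=\emptyset$): Theorem~\ref{deltaKPempty} gives that the canonical embedding $\dm\hookrightarrow\km$ is an isomorphism, hence $\dm=\km$. The implication (1)$\imp$(4) of Theorem~\ref{kpempty} then yields that $\km/\kv$ is algebraic. Consequently every element of $\dm=\km$ is algebraic over $\kv$, which forces $\kal$ (defined as the algebraic closure of $\kv$ inside $\dm$) to coincide with all of $\dm$, completing this case. For the second case ($\mu/v$ incommensurable): parts (3) and (4) of Theorem~\ref{mainincomm} directly give the chain $\kal=\dm=\km$ together with the finiteness of $\km/\kv$, so nothing further is required.

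For the third case ($\mu/v$ commensurable and $\kpm\neq\emptyset$): Theorem~\ref{Dstructure} identifies $\dm$ with $\kal[\xi]$ for an element $\xi$ transcendental over $\kv$. Since $\kal/\kv$ is algebraic, $\xi$ must also be transcendental over $\kal$, so $\dm$ is genuinely a polynomial ring in one variable over $\kal$, with field of fractions $\kal(\xi)$. Theorem~\ref{resfield} then identifies $\op{Frac}(\dm)$ with $\km$, giving $\km\simeq\kal(y)$ for an indeterminate $y$. The proof presents no real obstacle, being a pure compilation; the only subtle step worth flagging is the passage from transcendence of $\xi$ over $\kv$ to transcendence over $\kal$, which is automatic from the algebraicity of $\kal/\kv$ but needs to be noted in order to invoke the polynomial-ring identification rather than merely a $\kal$-algebra generated by $\xi$.
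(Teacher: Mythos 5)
Your proof is correct and follows exactly the route the paper intends: the corollary is stated there as a direct consequence of Theorems \ref{mainincomm}, \ref{deltaKPempty}, \ref{kpempty}, \ref{resfield} and \ref{Dstructure}, which are precisely the results you cite case by case. Your remark that transcendence of $\xi$ over $\kv$ upgrades to transcendence over $\kal$ (since $\kal/\kv$ is algebraic) is a worthwhile detail that the paper leaves implicit.
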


\section{Residual polynomial operator}\label{secR} 
Suppose  $\mu/v$ commensurable and $\kpm\ne\emptyset$. 

Let us fix a key polynomial $\phi\in\kpm$ of minimal degree $n$. Let $q=\hm(\phi)$ be the corresponding prime element of $\ggm$. \e

For the description of the set $\kpm$ in section \ref{secKPuf}, we need a ``residual polynomial" operator $R\colon \kx\to\kal[y]$, yielding a decomposition of any homogeneous element $\hm(f)\in\ggm$ into a product of a unit, a power of $q$, and the degree-zero element $R(f)(\xi)\in\Delta=\kal[\xi]$ (Theorem \ref{Hmug}). As a consequence, the operator $R$ provides a computation of the residual ideal operator (Theorem \ref{RR}).\e

In Lemma \ref{subgroup} we proved that $\gm=\gen{\g_n,\mu(\phi)}$, where $\g_n$ is the subgroup 
$$
\g_n=\{\mu(a)\mid a\in\kx_n,\ a\ne0\}\subset\gm.
$$
Let $e$ be a minimal positive integer with $e\mu(\phi)\in\g_n$. By Theorem \ref{bound}, all key polynomials $\chi$ of degree $n$ have the same $\mu$-value $\mu(\chi)=\mu(\phi)$. Thus, this positive integer $e$ does not depend on the choice of $\phi$. 

It will be called the \emph{relative ramification index of $\mu$}.\e

We fix a polynomial $u\in\kx_n$ such that $\mu(u\phi^e)=0$, and consider $$\xi=\hm(u\phi^e)=\hm(u)q^e\in\dm.$$ 

By Theorem \ref{Dstructure}, $\xi$ is transcendental over $\kv$ and $\dm=\kal[\xi]$.\e

Throughout this section, for any polynomial $f\in\kx$ we denote
$$
s(f):=s_\phi(f),\qquad s'(f):=s'_\phi(f),\qquad I(f):=I_\phi(f).
$$

For $s\in I(f)$, the condition $\mu(f_s\phi^s)=\mu(f)$ implies that $s$ belongs to a fixed class modulo $e$. In fact,
for any pair $s,t\in I(f)$,
$$
\mu(f_s\phi^s)=\mu(f_t\phi^t)\imp (t-s)\mu(\phi)=\mu(f_s)-\mu(f_t)\in\g_n \imp t\equiv s\md{e}. 
$$
Hence,  $I(f)\subset\left\{s_0,s_1,\dots,s_d\right\}$, where
$$
s_0=s(f)=\Min(I(f)),\quad s_j=s_0+je,\ 0\le j\le d,\quad s_d=s'(f)=\mx(I(f)). 
$$

By Lemma \ref{sphi}, we may write
\begin{equation}\label{rdetre}
f\smu\sum_{s\in I(f)}f_s\phi^s\smu \phi^{s_0}\left(f_{s_0}+\cdots +f_{s_j}\phi^{je}+\cdots +f_{s_d}\phi^{de}\right),
\end{equation}
having into account only the monomials for which $s_j\in I(f)$.

\begin{definition}
Consider the residual polynomial operator
$$
R:=R_\phi\colon \kx \lra \kal[y],\qquad R(f)=\zeta_0+\zeta_1y+\cdots+\zeta_{d-1}y^{d-1}+y^d,
$$
for $f \ne0$, where the coefficients $\zeta_j\in\kal$ are defined by:
\begin{equation}\label{defzetaj}
\zeta_j=\begin{cases}
\hm(f_{s_d})^{-1}\hm(u)^{d-j}\hm(f_{s_j}),&\mbox{ if }s_j\in I(f),\\
0,&\mbox{ if }s_j\not\in I(f).
\end{cases}
\end{equation}

Also, we define $R(0)=0$. 
\end{definition}

For $s_j\in I(f)$, we have 
$$\mu(f_{s_j}\phi^{je})=\mu(f_{s_d}\phi^{de})=\mu(f/\phi^{s_0}),$$
so that $\mu(f_{s_j})=\mu(f_{s_d})+(d-j)e\mu(\phi)=\mu(f_{s_d})-(d-j)\mu(u)$. 

Since the three homogeneous elements $\hm(f_{s_j})$, $\hm(f_{s_d})$ and $\hm(u)$ are units in $\ggm$, we deduce that $\zeta_j\in\dm^*=(\kal)^*$ for $s_j\in I(f)$. 

Thus, the monic residual polynomial $R(f)$ is well defined, and it has degree
\begin{equation}\label{degR}
d(f):=\deg(R(f))=d=\left(s'(f)-s(f)\right)/e.
\end{equation}

Note that $\zeta_0\ne0$, because $s_0\in I(f)$. Thus, $R(f)(0)\ne0$.\bs

\noindent{\bf Example. }For any monomial $f=a\phi^s$ with $a\in\kx_n$, we have $R(f)=1$.

\begin{definition}\label{defnlc}
With the above notation, the \emph{normalized leading residual coeficient}
$$
\nlc(f)=\hm(f_{s_d})\hm(u)^{-d}=\lrc(f)\hm(u)^{-d}\in\ggm^*,
$$
is an homogeneous element in $\ggm$ of degree $\mu(f)-s(f)\mu(\phi)$.
\end{definition}

For any $g\in\kx$, from (\ref{multiplicative}) and Corollary \ref{sprime+}, we deduce that 
$$
d(fg)=d(f)+d(g),\qquad \nlc(fg)=\nlc(f)\nlc(g),\quad \forall f,g\in\kx.
$$

By definition, for any $s_j\in I(f)$ we have
$$\nlc(f)\,\zeta_j\,\xi^j=\hm(f_{s_j})\hm(\phi^{je}).
$$
Thus,  (\ref{rdetre}) leads to the following identity, which is the ``raison d'$\hat{e}$tre" of $R(f)$.

\begin{theorem}\label{Hmug}
For any $f\in\kx$, we have $\hm(f)=\nlc(f)\,q^{s(f)}R(f)(\xi)$.\hfill{$\Box$}
\end{theorem}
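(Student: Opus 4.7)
The proof is essentially a bookkeeping exercise built directly on top of Lemma \ref{sphi} and the definitions. Here is how I would organize it.

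First, I would extract the core identity from Lemma \ref{sphi}: $f\smu\sum_{s\in I(f)}f_s\phi^s$, and moreover every term in that sum has $\mu$-value exactly $\mu(f)$. Applying $H_\mu$ and using the additivity part of (\ref{Hmu}) (valid precisely because the summands share the same $\mu$-value) together with multiplicativity yields
\begin{equation*}
\hm(f)=\sum_{s\in I(f)}\hm(f_s)\,q^{s}.
\end{equation*}

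Next I would reparametrize the summation via $s_j=s_0+je$, where $s_0=s(f)$ and $j$ runs from $0$ to $d=d(f)=(s'(f)-s(f))/e$. Only indices $s_j\in I(f)$ contribute; for the missing ones the coefficient $\zeta_j$ was defined to be $0$ anyway, so the sum can be extended formally to all $0\le j\le d$. Pulling out $q^{s_0}$ and using $q^{je}=\xi^{j}\hm(u)^{-j}$ (which follows from the very definition $\xi=\hm(u)q^e$, plus the fact that $\hm(u)$ is a unit by Proposition \ref{smallunits}) gives
\begin{equation*}
\hm(f)=q^{s(f)}\sum_{j=0}^{d}\hm(f_{s_j})\,\hm(u)^{-j}\,\xi^{j},
\end{equation*}
with the convention that $\hm(f_{s_j})=0$ when $s_j\notin I(f)$.

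Finally I would factor out $\nlc(f)=\hm(f_{s_d})\hm(u)^{-d}$ from each term, writing
\begin{equation*}
\hm(f_{s_j})\,\hm(u)^{-j}=\bigl(\hm(f_{s_d})\hm(u)^{-d}\bigr)\bigl(\hm(f_{s_d})^{-1}\hm(u)^{d-j}\hm(f_{s_j})\bigr)=\nlc(f)\,\zeta_j,
\end{equation*}
where I check separately that $\zeta_d=1$ (matching the monic top coefficient of $R(f)$). Summing over $j$ and recognizing $\sum_{j=0}^d\zeta_j\xi^j=R(f)(\xi)$ delivers the claimed formula $\hm(f)=\nlc(f)\,q^{s(f)}\,R(f)(\xi)$.

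There is no real obstacle; the only thing to be careful about is justifying that $\hm(u)$ and each $\hm(f_{s_j})$ (for $s_j\in I(f)$) are units in $\ggm$, so that the algebraic rearrangement is legitimate. Both follow immediately from Proposition \ref{smallunits}, since $u$ and every $f_{s_j}$ lie in $\kx_n$. The case $f=0$ is trivial from the convention $R(0)=0$.
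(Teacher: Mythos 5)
Your proof is correct and follows essentially the same route the paper takes: the theorem is stated with no written proof precisely because it is the immediate consequence of (\ref{rdetre}) together with the identity $\nlc(f)\,\zeta_j\,\xi^j=\hm(f_{s_j})\hm(\phi^{je})$ noted just before it, and your argument simply spells out that bookkeeping, including the unit checks via Proposition \ref{smallunits}.
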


Note that $\nlc(f)$ is a unit, $q^{s(f)}$ the power of a prime element, and $R(f)(\xi)\in\dm$.

Let us derive from Theorem \ref{Hmug} some basic properties of the residual polynomials.

\begin{corollary}\label{Rmult}
For all $f,g\in\kx$, we have $R(fg)=R(f)R(g)$.
\end{corollary}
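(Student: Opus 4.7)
The plan is simply to invoke Theorem \ref{Hmug} on both sides of the desired identity and cancel. Assuming $f,g\ne 0$ (otherwise both sides vanish), Theorem \ref{Hmug} gives
$$
\hm(fg)=\nlc(fg)\,q^{s(fg)}\,R(fg)(\xi),
$$
while the multiplicativity of $\hm$ (equation (\ref{Hmu})) combined with Theorem \ref{Hmug} applied to $f$ and $g$ separately gives
$$
\hm(fg)=\hm(f)\hm(g)=\nlc(f)\nlc(g)\,q^{s(f)+s(g)}\,R(f)(\xi)\,R(g)(\xi).
$$

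Next I would use the two multiplicativity identities that have already been recorded just before the corollary: namely $s_\phi(fg)=s_\phi(f)+s_\phi(g)$ (from (\ref{multiplicative})) and $\nlc(fg)=\nlc(f)\nlc(g)$. These allow the factors $\nlc(f)\nlc(g)$ and $q^{s(f)+s(g)}$ to be cancelled from both sides in the integral domain $\ggm$, leaving
$$
R(fg)(\xi)=R(f)(\xi)\,R(g)(\xi)\quad\text{in }\dm=\kal[\xi].
$$

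Finally, because $\xi$ is transcendental over $\kv$ (hence over its algebraic closure $\kal$) by Theorem \ref{Dstructure}, the evaluation map $\kal[y]\to\kal[\xi]$, $y\mapsto \xi$, is an isomorphism of $\kal$-algebras; therefore the polynomial identity $R(fg)=R(f)R(g)$ holds in $\kal[y]$. The degrees already match, since equation (\ref{degR}) and Corollary \ref{sprime+} yield $d(fg)=d(f)+d(g)$, which is consistent with both sides being monic. No step looks to be a real obstacle here: the entire content of the corollary has been packaged into the multiplicativity of $\nlc$ and $s_\phi$ already proved, and into the transcendence of $\xi$; the only mild care needed is to observe that cancellation of the nonzero homogeneous factor $\nlc(f)\nlc(g)q^{s(f)+s(g)}$ is legitimate in the domain $\ggm$.
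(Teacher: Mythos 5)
Your proof is correct and follows essentially the same route as the paper: apply Theorem \ref{Hmug} to $f$, $g$, and $fg$, use the multiplicativity of $\hm$, $s_\phi$ and $\nlc$ to cancel the unit and prime-power factors in the domain $\ggm$, and then invoke Theorem \ref{Dstructure} (transcendence of $\xi$ over $\kal$) to pass from the identity $R(fg)(\xi)=R(f)(\xi)R(g)(\xi)$ in $\dm$ to the polynomial identity in $\kal[y]$. The only difference is that you spell out the cancellation explicitly, which the paper leaves implicit.
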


\begin{proof}
Since the functions $\hm$ and $\nlc$ are multiplicative, Theorem \ref{Hmug} shows that $$R(fg)(\xi)=R(f)(\xi)R(g)(\xi).$$
By Theorem \ref{Dstructure}, we deduce that $R(fg)=R(f)R(g)$. 
\end{proof}\e

\begin{corollary}\label{equivR}
For all $f,g\in K[x]$,  
$$
\as{1.3}
\begin{array}{ccl}
f \smu g&\iff&I(f)=I(g), \ \nlc(f)=\nlc(g)\ \mbox{ and }\ R(f)=R(g).\\
f \mmu g&\iff&s(f)\le s(g) \ \mbox{ and }\ R(f)\mid R(g) \ \mbox{ in }\ \kal[y].
\end{array}
$$
\end{corollary}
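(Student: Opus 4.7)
The plan is to exploit the factorization
$$\hm(f) = \nlc(f)\, q^{s(f)}\, R(f)(\xi)$$
given by Theorem \ref{Hmug}, together with the structural facts that $\ggm$ is an integral domain in which $q = \hm(\phi)$ is a prime, that $\dm = \kal[\xi]$ is a polynomial ring (Theorem \ref{Dstructure}) so that $\dm^* = (\kal)^*$, and that every nonzero homogeneous element of $\ggm$ is an initial term $\hm(h)$ for some $h \in \kx$.

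For the first equivalence, the implication $f \smu g \,\Rightarrow\,$ (right-hand side) is immediate from Lemma \ref{sprime}: one gets $I(f) = I(g)$ together with $f_s \smu g_s$ for $s \in I(f)$, which by the very definitions of $\nlc$ and of the coefficients $\zeta_j$ yields $\nlc(f) = \nlc(g)$ and $R(f) = R(g)$. For the converse, starting from an equality $\hm(f) = \hm(g)$ and applying Theorem \ref{Hmug}, the key step is to extract $s(f) = s(g)$ via the $q$-adic valuation on $\ggm$: the crucial observation is that $R(f)(\xi) = \zeta_0 + \xi\,(\cdots)$ is not divisible by $q$, because its constant term $\zeta_0 \in \kal \subset \dm^* \subset \ggm^*$ is a unit while $q$ is not. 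Canceling $q^{s(f)}$ in the integral domain $\ggm$ then reduces the problem to an equality $\nlc(f)\, R(f)(\xi) = \nlc(g)\, R(g)(\xi)$; dividing by the unit $\nlc(f)$ gives $R(f)(\xi) = c\, R(g)(\xi)$ with $c = \nlc(f)^{-1}\nlc(g) \in \dm^* = (\kal)^*$, and since $\xi$ is transcendental over $\kal$ with both polynomials monic of the same degree, one concludes $c = 1$, $R(f) = R(g)$ and $\nlc(f) = \nlc(g)$; finally $I(f) = I(g)$ because $s(f)$ and the vanishing pattern of the coefficients of $R(f)$ determine $I(f)$.

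For the second equivalence, $f \mmu g$ unfolds to divisibility $\hm(f) \mid \hm(g)$ in $\ggm$. The forward direction then follows from the first equivalence applied to $g \smu fh$, using (\ref{multiplicative}) for $s_\phi$ and Corollary \ref{Rmult} for $R$. For the converse, under the hypotheses $s(f) \le s(g)$ and $R(f) \mid R(g)$ in $\kal[y]$, the plan is to exhibit the candidate quotient
$$
\eta \;:=\; \frac{\nlc(g)}{\nlc(f)}\; q^{\,s(g) - s(f)}\; \frac{R(g)}{R(f)}(\xi),
$$
and verify that it already lies in $\ggm$: the first factor is a unit, the second belongs to $\ggm$ because $s(g) - s(f) \ge 0$, and the third lies in $\dm = \kal[\xi]$ because $R(g)/R(f)$ is a bona fide element of $\kal[y]$. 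A direct application of Theorem \ref{Hmug} then gives $\hm(f)\,\eta = \hm(g)$; writing $\eta = \hm(h)$ for some $h \in \kx$ produces $g \smu fh$, hence $f \mmu g$.

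The main technical obstacle is the $q$-adic extraction of $s(f)$ from the factorization in Theorem \ref{Hmug}, which requires checking that the degree-zero factor $R(f)(\xi)$ is genuinely coprime to $q$ in $\ggm$. Once this is in place, everything else reduces to routine bookkeeping in $\kal[y]$ via the isomorphism $\kal[y] \cong \kal[\xi] = \dm$.
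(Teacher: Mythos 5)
Your handling of the second equivalence, and of the implication $f\smu g\Rightarrow\bigl(I(f)=I(g),\ \nlc(f)=\nlc(g),\ R(f)=R(g)\bigr)$, is correct and essentially the paper's own argument: the forward directions rest on Lemma \ref{sprime}, Corollary \ref{Rmult} and the multiplicativity (\ref{multiplicative}) of $s_\phi$, and your converse for divisibility is exactly what the paper means by ``by Theorem \ref{Hmug}'': the element $\eta=\nlc(g)\nlc(f)^{-1}\,q^{s(g)-s(f)}\,(R(g)/R(f))(\xi)$ is a homogeneous element of $\ggm$ satisfying $\hm(f)\,\eta=\hm(g)$, and divisibility of $\hm(g)$ by $\hm(f)$ in $\ggm$ is precisely $f\mmu g$, so you do not even need to realize $\eta$ as $\hm(h)$ (though that is also possible, as every non-zero homogeneous element of $\ggm$ is an initial term).

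There is, however, a direction mix-up in the first equivalence. The paragraph you label ``the converse'' starts from the equality $\hm(f)=\hm(g)$, which \emph{is} the hypothesis $f\smu g$; so what you prove there is the forward implication a second time, by a different route ($q$-adic extraction of $s(f)$, using that $R(f)(\xi)$ has non-zero constant term $\zeta_0\in(\kal)^*$ and hence is not divisible by the prime $q$, then cancellation in the domain $\ggm$ and the transcendence of $\xi$). That argument is correct as far as it goes — and the coprimality of $R(f)(\xi)$ with $q$ is a genuinely useful observation — but it is redundant, and the actual converse, namely that $I(f)=I(g)$, $\nlc(f)=\nlc(g)$ and $R(f)=R(g)$ together imply $f\smu g$, is never addressed in your write-up. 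Fortunately it is a one-liner, and it is how the paper closes this direction: $I(f)=I(g)$ gives $s(f)=\Min(I(f))=\Min(I(g))=s(g)$, and then Theorem \ref{Hmug} yields $\hm(f)=\nlc(f)\,q^{s(f)}R(f)(\xi)=\nlc(g)\,q^{s(g)}R(g)(\xi)=\hm(g)$. With that sentence added, your proof is complete and follows the same overall strategy as the paper's, the only substantive difference being your extra (unneeded) $q$-adic argument.
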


\begin{proof}
If $f \smu g$, then $I(f)=I(g)$ and $\nlc(f)=\nlc(g)$ by Lemma \ref{sprime}.  Thus, $R(f)(\xi)=R(g)(\xi)$ by Theorem \ref{Hmug}, leading to
$R(f)=R(g)$ by Theorem \ref{Dstructure}.

Conversely, $I(f)=I(g)$ implies $s(f)=\Min(I(f))=\Min(I(g))=s(g)$. Thus, $\hm(f)=\hm(g)$ follows from Theorem \ref{Hmug}. \e

If $f \mmu g$, then $fh\smu g$ for some $h\in\kx$. By the first item and Corollary \ref{Rmult}, we get $R(g)=R(fh)=R(f)R(h)$, so that $R(f)\mid R(g)$. 

Also, since $s(g)=s(f)+s(h)$, we deduce that $s(f)\le s(g)$.

Conversely,  $s(f)\le s(g)$ and $R(f)\mid R(g)$ imply $\hm(f)\mid\hm(g)$ by Theorem \ref{Hmug}, having in mind that $\nlc(f)$, $\nlc(g)$ are units in $\ggm$.
\end{proof}\e

\begin{corollary}\label{Rconstruct}
Let $s\in\Z_{\ge0}$, $\zeta\in (\kal)^*$, and $\psi\in\kal[y]$ a monic polynomial with $\psi(0)\ne0$. 
Then, there exists a polynomial $f\in\kx$ such that
$$
s(f)=s,\qquad \nlc(f)=\zeta,\qquad R(f)=\psi.
$$
\end{corollary}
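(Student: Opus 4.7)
I will construct $f$ explicitly as a $\phi$-expansion supported on the arithmetic progression $s_j := s + je$, $0 \le j \le d$. Write $\psi = \zeta_0 + \zeta_1 y + \cdots + \zeta_{d-1}y^{d-1} + y^d$, set $\zeta_d = 1$, and note that $\zeta_0 \ne 0$ since $\psi(0) \ne 0$. My candidate will be
$$
f = \sum_{j=0}^{d} f_{s_j}\,\phi^{s_j},
$$
where each coefficient $f_{s_j}$ lies in $\kx_n$, so that this sum is the canonical $\phi$-expansion of $f$. I will choose $f_{s_d}$ first, and then choose each remaining $f_{s_j}$ in order to force the defining formulas (\ref{defzetaj}) to reproduce exactly the prescribed $\zeta_j$.

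The key enabling fact is Proposition \ref{smallunits}, which tells me that every unit in $\ggm$ is of the form $\hm(a)$ for some $a \in \kx_n$. Since $\zeta \in (\kal)^*$ and $\hm(u)$ are both units, so is $\zeta\,\hm(u)^d$; hence I can pick $f_{s_d} \in \kx_n$ with $\hm(f_{s_d}) = \zeta\,\hm(u)^d$. For each $j<d$ with $\zeta_j \ne 0$, the homogeneous element $\zeta_j\,\hm(f_{s_d})\,\hm(u)^{j-d}$ is again a unit in $\ggm$, so the same proposition lets me choose $f_{s_j} \in \kx_n$ realizing it; for indices $j$ with $\zeta_j = 0$, I set $f_{s_j} = 0$.

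It remains to verify the three required properties of this $f$. Using the identity $\mu(u) = -e\mu(\phi)$ (built into the choice of $u$), a direct computation shows that $\mu(f_{s_j}\phi^{s_j}) = \mu(f_{s_d}\phi^{s_d})$ for every $j$ with $\zeta_j \ne 0$, independently of $j$. Proposition \ref{minimal0}(2) applied to $\phi$ then yields $\mu(f)$ equal to this common value and $I(f) = \{s_j : \zeta_j \ne 0\}$, a set containing $s_0$ (since $\zeta_0 \ne 0$) and $s_d$. Therefore $s(f) = s_0 = s$ and $d(f) = d$, so $\nlc(f) = \hm(f_{s_d})\hm(u)^{-d} = \zeta$, and substituting the chosen $\hm(f_{s_j})$ into (\ref{defzetaj}) recovers $\zeta_j$ for every $j$. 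Hence $R(f) = \psi$, as desired. The only nonmechanical step in the whole argument is the realizability of arbitrary units of $\ggm$ as initial forms of polynomials in $\kx_n$, which is precisely the content of Proposition \ref{smallunits}; everything else is bookkeeping with the definitions of $\nlc$ and $R$.
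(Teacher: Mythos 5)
Your proof is correct and follows essentially the same route as the paper: you build $f=\phi^{s}\sum_j f_{s_j}\phi^{je}$ with $\hm(f_{s_j})=\zeta\,\zeta_j\,\hm(u)^{j}$ (exactly the paper's choice of coefficients), the only difference being that you spell out the verification and invoke Proposition \ref{smallunits} for realizing the required units by elements of $\kx_n$, where the paper cites (\ref{defkal}) and leaves the bookkeeping implicit.
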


\begin{proof}
Let $\psi=\zeta_0+\zeta_1y+\cdots \zeta_{d-1}y^{d-1}+\zeta_dy^d$, with $\zeta_0,\dots,\zeta_{d-1}\in\kal$ and $\zeta_d=1$. Let $I$ be the set of indices $0\le j\le d$ with $\zeta_j\ne0$. 

By (\ref{defkal}), for each $j\in I$ we may take $f_j\in \kx_n$ such that $\hm(f_j)=\zeta\hm(u)^j\zeta_j$. Then, 
$f=\phi^s\left(f_0+\cdots+f_j\phi^{je}+\cdots +f_d\phi^{de}\right)$
satisfies all our requirements.
\end{proof}\e

\begin{theorem}\label{RR}
For any non-zero $f\in K[x]$, 
$$\rr(f)=\xi^{\lceil s(f)/e\rceil}R(f)(\xi)\Delta.
$$
\end{theorem}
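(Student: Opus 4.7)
By Theorem \ref{Hmug}, we have $\hm(f) = \nlc(f)\,q^{s(f)} R(f)(\xi)$ with $\nlc(f)\in\ggm^*$, so $\hm(f)\ggm = q^{s(f)} R(f)(\xi)\,\ggm$. Writing $s := s(f)$ and $c := \lceil s/e\rceil$, the theorem reduces to the equality of $\Delta$-ideals
$$
\left(q^{s} R(f)(\xi)\,\ggm\right)\cap \Delta\ =\ \xi^{c} R(f)(\xi)\,\Delta.
$$
The inclusion $\supseteq$ is immediate: since $\xi = \hm(u)\,q^e$ with $\hm(u)\in\ggm^*$ and $ec \ge s$, the element $\xi^{c} R(f)(\xi)$ lies in $\Delta$ and is divisible by $q^{s} R(f)(\xi)$ in $\ggm$.

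For the inclusion $\subseteq$, fix $\alpha$ in the left-hand side and write $\alpha = p(\xi)$ with $p\in \kal[y]$, using $\Delta=\kal[\xi]$ from Theorem \ref{Dstructure}. From $R(f)(\xi)\mid\alpha$ in $\ggm$, a degree count shows the quotient is homogeneous of degree zero, giving $\alpha=R(f)(\xi)\,m(\xi)$ in $\Delta$. Factor $p(\xi)=\xi^k p_1(\xi)$ in $\kal[\xi]$ with $p_1(0)\ne 0$. By the definition of $R(f)$, the constant term $\zeta_0$ is nonzero, so $R(f)(0)\ne 0$ and $\xi\nmid R(f)(\xi)$ in $\kal[\xi]$; since $\xi$ is prime there, the relation $\xi^{k}\mid R(f)(\xi)m(\xi)$ forces $\xi^{k}\mid m(\xi)$. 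It therefore suffices to prove $k\ge c$, equivalently $ek\ge s$.

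The last step is the key one and bridges $q$-divisibility in $\ggm$ with $\xi$-divisibility in $\Delta$. Reducing modulo the prime ideal $q\ggm$, the identity $\xi=\hm(u)q^e$ yields $\xi\equiv 0\pmod q$, so every $h\in\kal[y]$ satisfies $h(\xi)\equiv h(0)\pmod q$; in particular $p_1(\xi)\not\equiv 0\pmod q$, as $p_1(0)\in\kal^*\subset\ggm^*$ by \eqref{defkal} and \eqref{xi}. Hence the factorization $\alpha=\hm(u)^{k}q^{ek}p_1(\xi)$ in $\ggm$ shows that $ek$ is the exact $q$-adic valuation of $\alpha$. Since $\alpha\in q^{s}R(f)(\xi)\,\ggm$ gives $q^{s}\mid\alpha$ in $\ggm$, we deduce $s\le ek$, as required. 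The main obstacle is precisely this dictionary between $q$-adic and $\xi$-adic divisibility, unlocked by the relation $\xi=\hm(u)q^e$ and the nonvanishing of $R(f)(0)$; once it is in place, the rest is routine in the principal ideal domain $\kal[\xi]$.
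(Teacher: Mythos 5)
Your proof is correct, and for the nontrivial inclusion it takes a genuinely different route from the paper. The paper's argument writes an arbitrary element of $\rr(f)$ as $\hm(h)$ for some $h\in\kx$ with $f\mmu h$ and $\mu(h)=0$, notes that $\mu(h)=0$ forces $e\mid s(h)$, and then simply quotes Corollary \ref{equivR} (giving $s(f)\le s(h)$ and $R(f)\mid R(h)$) together with Theorem \ref{Hmug} applied to $h$; its converse inclusion is the same computation as yours. You never choose a polynomial representative: you take $\alpha\in\left(\hm(f)\ggm\right)\cap\dm$, expand it in $\dm=\kal[\xi]$ via Theorem \ref{Dstructure}, and recover the exponent bound $e\,\ord_\xi(\alpha)\ge s(f)$ by comparing $q$-adic divisibility in $\ggm$ with $\xi$-adic divisibility in $\kal[\xi]$, using the relation $\xi=\hm(u)q^e$, the primality of $q$ in $\ggm$ and of $\xi$ in $\kal[\xi]$, and the nonvanishing $R(f)(0)\ne0$. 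In effect you re-derive, by hand and only in the special case needed, the divisibility dictionary that the paper has already packaged in Corollary \ref{equivR}; what this buys is a self-contained, purely ideal-theoretic proof resting only on Theorems \ref{Hmug} and \ref{Dstructure}, while the paper's route is shorter because Corollary \ref{equivR} does the bridging for free. Two small points to tidy: dispose of $\alpha=0$ trivially before writing $\alpha=\xi^k p_1(\xi)$ with $p_1(0)\ne0$, and when you divide $\alpha$ by $R(f)(\xi)$ in $\ggm$, state explicitly that in the graded domain $\ggm$ the quotient of two nonzero homogeneous elements of degree zero is again homogeneous of degree zero (this is the content of your ``degree count'', and it is what places $m(\xi)$ in $\dm$).
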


\begin{proof}
By definition, an element in the ideal $\rr(f)$ is of the form $\hm(h)$ for some $h\in\kx$ such that $f\mmu h$ and $\mu(h)=0$. 

The condition $\mu(h)=0$ implies $e\mid s(h)$. By Theorem \ref{Hmug}, 
$$\hm(h)=\xi^{s(h)/e}\hm(u)^{-s(h)/e}\nlc(h)\,R(h)(\xi).$$
On the other hand, Corollary \ref{equivR} shows that $s(f)\le s(h)$ and $R(f)\mid R(h)$. Therefore, $\hm(h)$ belongs to the ideal $\xi^{\lceil s(f)/e\rceil}R(f)(\xi)\Delta$.

Conversely, if $m=\lceil s(f)/e\rceil$, then Theorem \ref{Hmug} shows that 
$$\xi^mR(f)(\xi)=q^{me}\hm(u)^mR(f)(\xi)=\hm(f)q^{me-s(f)}\nlc(f)^{-1}\hm(u)^m\in\rr(f),$$
because $me\ge s(f)$ and $\nlc(f)$, $\hm(u)$ are units. 
\end{proof}

\subsection{Dependence of $R$ on the choice of $u$}
Let $u^*\in\kx_n$ be another choice of a polynomial such that $\mu(u^*\phi^e)=0$, and denote
$$
\xi^*=\hm(u^*\phi^e)=\hm(u^*)q^e\in\dm.
$$ 

Since $\mu(u)=\mu(u^*)$, and $\hm(u)$, $\hm(u^*)$ are units in $\ggm$, we have $$\xi^*=\sigma^{-1}\xi,\quad \mbox{ where }\sigma=\hm(u)\hm(u^*)^{-1}\in\dm^*=(\kal)^*.$$

Let $R^*$ be the residual polynomial operator associated with this choice of $u^*$.

For any $f\in\kx$, suppose that $R^*(f)=\zeta^*_0+\zeta^*_1y+\cdots + \zeta^*_{d-1}y^{d-1}+y^d$. By the very definition (\ref{defzetaj}) of the residual coefficients, 
$$
\zeta^*_j=\sigma^{d-j}\zeta_j,\qquad 1\le j\le d.
$$

We deduce the following relationship between $R$ and $R^*$:
$$
R^*(f)(y)=\sigma^dR(f)(\sigma^{-1} y),\qquad \forall f\in\kx.
$$

\subsection{Dependence of $R$ on the choice of $\phi$}

Let $\phi_*$ be another key polynomial with minimal degree $n$, and denote 
$q_*=\hm(\phi_*)$. 

By Theorem \ref{bound}, $\mu(\phi_*)=\mu(\phi)$, so that
$$
\phi_*=\phi+a,\qquad a\in\kx_n,\quad \mu(a)\ge\mu(\phi).
$$

In particular, $\mu(u\phi_*^e)=0$, and we may consider 
$$
\xi_*=\hm(u\phi_*^e)=\hm(u)q_*^e\in\dm
$$
as a transcendental generator of $\dm$ as a $\kal$-algebra. 

Let $R_*$ be the residual polynomial operator associated with this choice of $\phi_*$.

\begin{proposition}\label{lastlevel}
Let $\phi_*$ be another key polynomial with minimal degree, and denote with a subindex $(\ )_*$ all objects depending on $\phi_*$.
\begin{enumerate}
\item If $\phi_*\smu\phi$, then \ $q_*=q$, \ $\xi_*=\xi$  \,and\, $R_*=R$.
\item If $\phi_*\not\smu\phi$, then \ $e=1$, \ $q_*=q+\hm(a)$  \,and\,\, $\xi_*=\xi+\tau$, 

\noindent where \,$\tau=\hm(ua)\in(\kal)^*$. In this case, for any $f\in K[x]$ we have
\begin{equation}\label{RR*}
y^{s(f)}R(f)(y)=(y+\tau)^{s_*(f)}R_*(f)(y+\tau).
\end{equation}
In particular, $s_*(f)=\ord_{y+\tau}\left(R(f)\right)$ and $s(f)+d(f)=s_*(f)+d_*(f)$.
\end{enumerate}
\end{proposition}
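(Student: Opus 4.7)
The plan is to exploit the uniqueness built into Theorem \ref{Hmug}, which expresses $\hm(f)$ simultaneously as $\nlc(f)\,q^{s(f)}R(f)(\xi)$ and as $\nlc_*(f)\,q_*^{s_*(f)}R_*(f)(\xi_*)$; in both parts the residual polynomials will be recovered from these two representations using transcendence of $\xi$ over $\kal$ (Theorem \ref{Dstructure}) together with a monicity comparison.

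For Part (1), the hypothesis $\phi_*\smu\phi$ gives $q_*=\hm(\phi_*)=\hm(\phi)=q$, and hence $\xi_*=\hm(u)q_*^e=\xi$. Taking the $q$-adic order of the two expressions for $\hm(f)$, and noting that $R(f)(0)=\zeta_0\neq 0$ forces $R(f)(\xi)$ to have $q$-order zero (since $\xi$ has positive $q$-order), one obtains $s_*(f)=s(f)$. Cancelling $q^{s(f)}$ yields $\nlc(f)R(f)(\xi)=\nlc_*(f)R_*(f)(\xi)$; transcendence lifts this to the identity $R(f)=(\nlc_*(f)/\nlc(f))R_*(f)$ in $\kal[y]$, and monicity of both sides forces the scalar to be $1$.

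For Part (2), the hypothesis $\phi_*\not\smu\phi$ combined with $\mu(a)\ge\mu(\phi)$ forces $\mu(a)=\mu(\phi)$, which places $\mu(\phi)\in\g_n$ and hence $e=1$. Additivity (\ref{Hmu}) yields $q_*=q+\hm(a)$, and $\xi_*=\hm(u)q_*=\xi+\tau$ with $\tau=\hm(ua)$; since $ua\in\kx_n$ and $\mu(ua)=0$, the element $\tau$ is a unit of $\dm$ lying in $\kal$. Substituting $q=\xi\hm(u)^{-1}$ and $q_*=(\xi+\tau)\hm(u)^{-1}$ into the two expressions for $\hm(f)$ transforms the equality into
$$\alpha\,\xi^{s(f)}R(f)(\xi) \;=\; \alpha_*\,(\xi+\tau)^{s_*(f)}R_*(f)(\xi+\tau),$$
where $\alpha=\nlc(f)\hm(u)^{-s(f)}$ and $\alpha_*=\nlc_*(f)\hm(u)^{-s_*(f)}$ are homogeneous units of the common degree $\mu(f)$. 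Their ratio therefore lies in $(\kal)^*$; transcendence of $\xi$ over $\kal$ promotes the identity to $\kal[y]$, and both sides being monic in $y$ forces $\alpha_*/\alpha=1$. This is precisely (\ref{RR*}). Applying $\ord_{y+\tau}$ to both sides — using that $\ord_{y+\tau}(y)=0$ and $R_*(f)(0)\neq 0$ — yields $\ord_{y+\tau}(R(f))=s_*(f)$, while comparing total $y$-degrees yields $s(f)+d(f)=s_*(f)+d_*(f)$.

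The main obstacle is the bookkeeping in Part (2): verifying through $\mu(u)=-\mu(\phi)$ and the definition of $\nlc$ that $\alpha$ and $\alpha_*$ really have a common degree $\mu(f)$, so that their ratio is scalar and the transcendence-plus-monicity argument indeed pins it down to $1$. Once the degrees and units are tracked correctly, the remaining content is just orders and degrees of one-variable polynomials over $\kal$.
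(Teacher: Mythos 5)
Your proposal is correct and follows essentially the same route as the paper: for part (2) the paper likewise equates the two factorizations of $\hm(f)$ from Theorem \ref{Hmug}, substitutes $q=\hm(u)^{-1}\xi$ and $q_*=\hm(u)^{-1}(\xi+\tau)$, and uses transcendence of $\xi$ (Theorem \ref{Dstructure}) together with monicity of $R(f)$ and $R_*(f)$ to force the scalar factor to equal $1$, the ``in particular'' claims then being read off from (\ref{RR*}) exactly as you do. The only differences are cosmetic: the paper proves part (1) by replacing $\phi$ with $\phi_*$ in the $\mu$-equivalences (\ref{rdetre}) so that the residual coefficients coincide, whereas you rerun the factorization argument (equally valid, since $q_*=q$ gives $s_*(f)=s(f)$ directly from Definition \ref{order}); and your claim that $ua\in\kx_n$ is inaccurate (its degree may reach $2n-2$) but harmless, because $\tau=\hm(u)\hm(a)$ is in any case a degree-zero unit of $\dm$, hence lies in $(\kal)^*$.
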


\begin{proof}
Suppose $\phi_*\smu\phi$. By definition,
$$
q_*=\hm(\phi_*)=\hm(\phi)=q,\qquad \xi_*=\hm(u\phi_*)=\hm(u\phi)=\xi.
$$

Let $f\in\kx$. By , we can replace $\phi$ with $\phi_*$ in equation (\ref{rdetre}) to obtain
$$
f\smu\sum\nolimits_{s\in I(f)}f_s\phi_*^s\smu \phi_*^{s_0}\left(f_{s_0}+\cdots +f_{s_j}\phi_*^{je}+\cdots +f_{s_d}\phi_*^{de}\right).
$$
Hence, (\ref{defzetaj}) leads to the same residual coefficients, so that $R(f)=R_*(f)$.\e

Suppose $\phi_*\not\smu\phi$; that is, $\mu(a)=\mu(\phi)$. Then,  $e=1$, $\hm(\phi_*)=\hm(\phi)+\hm(a)$, and $$\xi_*=\hm(u\phi_*)=\hm(u)\hm(\phi)+\hm(u)\hm(a)=\xi+\tau.
$$

Finally, let $f\in\kx$, and denote $s=s(f)$, $s_*=s_*(f)$. By Theorem \ref{Hmug},
$$
q^s\,\nlc(f)R(f)(\xi)=\hm(f)=q_*^{s_*}\,\nlc_*(f)R_*(f)(\xi_*).
$$
Since $q=\hm(u)^{-1}\xi$ and $q_*=\hm(u)^{-1}\xi_*=\hm(u)^{-1}(\xi+\tau)$, we deduce 
$$
\xi^sR(f)(\xi)=\sigma(\xi+\tau)^{s_*}R^*(f)(\xi+\tau),
$$
where $\sigma=\hm(u)^{s-s_*}\nlc_*(f)\nlc(f)^{-1}\in(\kal)^*$, because 
$$\deg\left(\nlc(f)\hm(u)^{-s}\right)=0=\deg\left(\nlc_*(f)\hm(u)^{-s_*}\right).$$

By Theorem \ref{Dstructure}, this implies $y^sR(f)(y)=\sigma(y+\tau)^{s_*}R^*(f)(y+\tau)$. Since $R(f)$ and $R_*(f)$ are monic polynomials, we have necessarily $\sigma=1$. This proves (\ref{RR*}).
\end{proof}

\section{Key polynomials and unique factorization in $\ggm$}\label{secKPuf}
We keep assuming $\mu/v$ commensurable and $\kpm\ne\emptyset$. Also, we keep dealing with a fixed key polynomial $\phi$ of minimal degree $n$, and we denote
$$
s(f):=s_\phi(f),\qquad s'(f):=s'_\phi(f),\qquad R(f):=R_\phi(f), \qquad \forall f\in\kx.
$$

\subsection{Homogeneous prime elements}\label{subsecHomogP}

By Theorem \ref{Dstructure}, the prime elements in $\dm$ are those of the form $\psi(\xi)$ for  $\psi\in\kal[y]$ an irreducible polynomial. 

An element in $\dm$ which is a prime in $\ggm$, is a prime in $\dm$, but the converse is not true. Let us now discuss what primes in $\dm$ remain prime in $\ggm$.

\begin{lemma}\label{prime0}
Let $\psi\in \kal[y]$ be a monic irreducible polynomial.
\begin{enumerate}
\item   If $\psi\ne y$, then  $\psi(\xi)$ is a prime element in $\ggm$. 
\item If $\psi=y$, then $\xi$ is a prime element in $\ggm$ if and only if $e=1$. 
\end{enumerate}
\end{lemma}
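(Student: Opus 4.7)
The plan is to leverage the identity $\xi = \hm(u\phi^e) = \hm(u)\,q^e$ together with the factorization from Theorem \ref{Hmug} and the UFD structure of $\kal[y]$. First I would note that $\hm(u)$ is a unit in $\ggm$: since $u \in \kx_n$, the element $\hm(u)$ is algebraic over $\ggv$ by the minimality of $n$, hence a unit by Proposition \ref{smallunits}.

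For part (2), if $e=1$ then $\xi$ is an associate of the prime $q$, hence itself prime. If $e > 1$, I would use the $q$-adic valuation $v_q$ on $\ggm$, which is well-defined since $q$ is prime. From $q^e = \hm(u)^{-1}\xi$ we get $\xi \mid q^e$; if $\xi$ were prime, repeated application of the prime property would give $\xi \mid q$, forcing $v_q(\xi) \le v_q(q) = 1$, contradicting $v_q(\xi) = e \ge 2$.

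For part (1), assume $\psi \ne y$, so that $\psi(0)$ is a nonzero element of $\kal$. Given a divisibility $\psi(\xi)\mid\hm(f)\hm(g) = \hm(fg)$ in $\ggm$, write $\hm(fg) = \psi(\xi)\,\hm(h)$ for some $h\in\kx$ (using that every nonzero homogeneous element of $\ggm$ is an $\hm$-image). I would apply Theorem \ref{Hmug} to $fg$ and to $h$, then compute $v_q$ on both sides of the resulting identity. Since $\psi(0)\ne 0$ forces $v_q(\psi(\xi))=0$ (the constant term contributes $v_q=0$ while every monomial $\psi_j\xi^j$ for $j\ge 1$ contributes $ej\ge e$), and since $\nlc(\cdot)$ values are units and $R(\cdot)(0)\ne 0$, I conclude $s(fg)=s(h)$. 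Cancelling $q^{s(fg)}$ in the domain $\ggm$ gives
$$\nlc(fg)\,R(fg)(\xi) \;=\; \psi(\xi)\,\nlc(h)\,R(h)(\xi).$$
A degree count (both sides of degree $\mu(fg)-s(fg)\mu(\phi)$) shows that $\rho := \nlc(h)\,\nlc(fg)^{-1}$ has degree zero, hence lies in $\dm^*=(\kal)^*$. Because $\xi$ is transcendental over $\kal$ by Theorem \ref{Dstructure}, this equality lifts to the identity $R(f)(y)R(g)(y) = \rho\,\psi(y)R(h)(y)$ in the UFD $\kal[y]$, invoking $R(fg)=R(f)R(g)$ from Corollary \ref{Rmult}. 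Irreducibility of $\psi$ then yields $\psi\mid R(f)$ or $\psi\mid R(g)$; in either case, Theorem \ref{Hmug} translates this back into $\psi(\xi)\mid\hm(f)$ or $\psi(\xi)\mid\hm(g)$ in $\ggm$.

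The main technical obstacle will be the bookkeeping needed to see that $\rho\in(\kal)^*$: one must verify carefully that $\mu(h)=\mu(fg)$ and $s(h)=s(fg)$, so that the homogeneous degrees of $\nlc(h)$ and $\nlc(fg)$ coincide. Once the reduction to an equation in $\kal[y]$ is legitimate, the argument becomes a one-line UFD statement.
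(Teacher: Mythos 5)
Your proof is correct and follows essentially the paper's route: both rest on the factorization $\hm(f)=\nlc(f)\,q^{s(f)}R(f)(\xi)$ of Theorem \ref{Hmug}, multiplicativity of $R$ (Corollary \ref{Rmult}), the transcendence of $\xi$ from Theorem \ref{Dstructure}, and irreducibility of $\psi$ in $\kal[y]$, with part (2) in both cases reduced to the observation that $\xi$ is associate to $q^e$. The only difference is organizational: the paper realizes $\psi(\xi)$ as $\hm(f)$ via Corollary \ref{Rconstruct} and then quotes the divisibility criterion of Corollary \ref{equivR}, whereas you re-derive that criterion inline (via the $q$-order and degree bookkeeping), which is legitimate.
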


\begin{proof}
Suppose $\psi\ne y$. Being $\psi$ irreducible, we have $\psi(0)\ne0$. By Lemma \ref{Rconstruct}, there exists $f\in\kx$ such that $s(f)=0$, $\nlc(f)=1$ and $R(f)=\psi$. By Theorem \ref{Hmug}, $\hm(f)=\psi(\xi)$.

Suppose $\psi(\xi)=\hm(f)$ divides the product of two homogeneous elements in $\ggm$. Say $f\mmu gh$ for some $g,h\in\kx$.

By Corollaries \ref{equivR} and \ref{Rmult}, $\psi=R(f)$ divides $R(gh)=R(g)R(h)$. Being $\psi$ irreducible, it divides either $R(g)$ or $R(h)$, and this leads to $\psi(\xi)$ dividing either $\hm(g)$ or $\hm(h)$ in $\ggm$, by Theorem \ref{Hmug}.\e

The element $\xi$ is associate to $q^{e}$ in $\ggm$. Since $q$ is a prime element, its $e$-th power is a prime if and only if $e=1$.   
\end{proof}\bs

Besides these prime elements belonging to $\dm$, we know that  $q$ is another prime element in $\ggm$, of degree $\mu(\phi)$. 

The next result shows that there are no other homogeneous prime elements in $\ggm$, up to multiplication by units.  

\begin{proposition}\label{mu-irr}
A polynomial $f\in\kx$ is $\mu$-irreducible if and only if one of the two following conditions is satisfied:
\begin{enumerate}
\item[(a)] $s(f)=s'(f)=1$.
\item[(b)] $s(f)=0$ and $R(f)$ is irreducible in $\kal[y]$.
\end{enumerate}
In the first case, $\hm(f)$ is associate to $q$. In the second case, to $R(f)(\xi)$.
\end{proposition}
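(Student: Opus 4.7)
The plan is to read off the characterization from the master identity $\hm(f)=\nlc(f)\,q^{s(f)}\,R(f)(\xi)$ of Theorem \ref{Hmug}. Since $\nlc(f)$ is a unit in $\ggm$, the polynomial $f$ is $\mu$-irreducible precisely when $q^{s(f)}R(f)(\xi)$ is a nonzero non-unit prime element of $\ggm$. The first step I would take is to factor $R(f)=\prod_i\psi_i^{m_i}$ into monic irreducibles in $\kal[y]$. Because $R(f)(0)\ne 0$ (as observed immediately after the definition of $R$), every $\psi_i$ differs from $y$, and Lemma \ref{prime0}(1) guarantees that each $\psi_i(\xi)$ is a prime element of $\ggm$. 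Consequently $\hm(f)$ is associate in $\ggm$ to the explicit product of primes $q^{s(f)}\prod_i\psi_i(\xi)^{m_i}$.

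The engine of the argument is the elementary observation that in an integral domain the product of two non-unit elements is never prime: the divisibility $ab\mid ab$ would force $ab\mid a$ or $ab\mid b$, making the other factor a unit. Since $\ggm$ is an integral domain, this principle forces the displayed product to contain exactly one non-unit prime factor in order for $\hm(f)$ to be prime. Running through the possibilities leaves exactly two winning configurations. Either $s(f)=1$ and $R(f)=1$, so that $\hm(f)\sim q$; via the degree identity (\ref{degR}) this is the same as $s(f)=s'(f)=1$, which is case (a). Or $s(f)=0$ and $R(f)$ is irreducible in $\kal[y]$, so that $\hm(f)\sim R(f)(\xi)$, which is prime by Lemma \ref{prime0}(1); this is case (b). The remaining combinations produce either a unit (when $s(f)=0$ and $R(f)=1$) or a product of at least two non-unit primes, both excluded by the integral-domain observation.

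I do not anticipate any serious obstacle beyond bookkeeping. The associate statements at the end of the proposition fall straight out of Theorem \ref{Hmug} once one notes, in case (a), that $R(f)$ monic of degree $d(f)=0$ collapses to $R(f)=1$. The only subtlety is making sure that the translation between ``$s(f)=1,\ R(f)=1$'' and ``$s(f)=s'(f)=1$'' is mediated transparently by (\ref{degR}); conceptually, once the factorization of $\hm(f)$ into primes of $\ggm$ is exposed by Theorem \ref{Hmug} together with Lemma \ref{prime0}(1), the integral-domain counting argument does the rest.
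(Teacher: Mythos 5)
Your proof is correct and follows essentially the same route as the paper: both read the characterization directly off the identity $\hm(f)=\nlc(f)\,q^{s(f)}R(f)(\xi)$ of Theorem \ref{Hmug}, use Lemma \ref{prime0} to recognize the prime factors coming from $R(f)$ (legitimate since $R(f)(0)\ne0$ rules out the factor $y$), and translate the case $R(f)=1$ into $s(f)=s'(f)$ via (\ref{degR}). The only cosmetic difference is that you factor $R(f)$ into irreducibles and count non-unit prime factors, whereas the paper decides when $R(f)(\xi)$ is a unit by appealing to Theorem \ref{Dstructure}; the substance is the same.
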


\begin{proof}
By Theorem \ref{Hmug}, $\hm(f)=q^{s(f)}\,\nlc(f)\,R(f)(\xi)$. Since $\nlc(f)$ is a unit and $q$ is a prime, $\hm(f)$ is a prime if and only if one of the two following conditions is satisfied:
\begin{enumerate}
\item[(i)] $s(f)=1$ and $R(f)(\xi)$ is a unit.
\item[(ii)] $s(f)=0$ and $R(f)(\xi)$ is a prime in $\ggm$.
\end{enumerate}

The homogeneous element of degree zero $R(f)(\xi)$ is a unit in $\ggm$ if and only if it is a unit in $\dm$. 
By Theorem \ref{Dstructure}, this is equivalent to $\deg(R(f))=0$, which in turn is equivalent to $s(f)=s'(f)$ by (\ref{degR}). Thus, (i) is equivalent to (a), and $\hm(f)$ is associate to $q$ in this case.

Since $R(f)\ne y$, (ii) is equivalent to (b) by Lemma \ref{prime0}. Clearly, $\hm(f)$ is associate to  $R(f)(\xi)$ in this case.
\end{proof}\bs

Putting together this characterization of $\mu$-irreducibility with the characterization of $\mu$-minimality from Proposition \ref{minimal}, we get the following characterization of key polynomials.

\begin{proposition}\label{charKP}
Let $\phi$ be a key polynomial for $\mu$, of minimal degree $n$.

A monic $\chi\in\kx$ is a key polynomial for $\mu$ if and  only if one of the two following conditions is satisfied:
\begin{enumerate}
\item[(a)] $\deg(\chi)=\deg(\phi)$ \,and\; $\chi\smu\phi$.
\item[(b)] $s(\chi)=0$, $\deg(\chi)=e\,n\deg(R(\chi))$\, and\; $R(\chi)$ is irreducible in $\kal[y]$.
\end{enumerate}

In the first case, $\rr(\chi)=\xi\dm$.
In the second case, $\rr(\chi)=R(\chi)(\xi)\dm$.
\end{proposition}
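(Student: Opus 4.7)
The plan is to combine the preceding two results: $\chi \in \kpm$ if and only if $\chi$ is monic, $\mu$-minimal (Proposition \ref{minimal}), and $\mu$-irreducible (Proposition \ref{mu-irr}). Since $\mu$-minimality condition (2) reads $\deg \chi = s'(\chi)\, n$ and $\mu$-irreducibility splits into the two subcases (a), (b) of Proposition \ref{mu-irr}, I will treat these subcases separately and show that each yields precisely one of the two conditions in the statement.

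In the irreducibility subcase $s(\chi) = s'(\chi) = 1$, minimality forces $\deg \chi = n$. Writing the $\phi$-expansion $\chi = \chi_0 + \chi_1 \phi$ with $\deg \chi_i < n$, and using that both $\chi$ and $\phi$ are monic of degree $n$, I get $\chi_1 = 1$. The condition $s(\chi)=1$ means $0 \notin I_\phi(\chi)$, so either $\chi_0 = 0$ or $\mu(\chi_0) > \mu(\chi) = \mu(\phi)$; either way $\hm(\chi) = \hm(\phi)$, i.e.\ $\chi \smu \phi$, which is (a). Conversely, given (a), $\hm(\chi) = q$ is prime, so $\chi$ is $\mu$-irreducible; since $\phi \mid_\mu \chi$ with $\deg \chi = \deg \phi$, Lemma \ref{mid=sim} shows $\chi \in \kpm$.

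For the subcase $s(\chi) = 0$ with $R(\chi)$ irreducible, identity (\ref{degR}) gives $s'(\chi) = e\deg R(\chi)$, so minimality translates into $\deg \chi = en\deg R(\chi)$, which is (b). The converse is the same identity read in the other direction: (b) provides $\mu$-irreducibility via Proposition \ref{mu-irr} and, combined with (\ref{degR}), gives $\deg \chi = s'(\chi)\,n$, hence $\mu$-minimality by Proposition \ref{minimal}.

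The residual ideals follow from Theorem \ref{RR}, which asserts $\rr(\chi) = \xi^{\lceil s(\chi)/e\rceil} R(\chi)(\xi)\,\Delta$. In case (a), $d(\chi) = (s'(\chi) - s(\chi))/e = 0$ forces $R(\chi) = 1$ (monic of degree zero), and $\lceil 1/e \rceil = 1$, so $\rr(\chi) = \xi\,\Delta$. In case (b), $s(\chi) = 0$ wipes out the $\xi$-factor and leaves $\rr(\chi) = R(\chi)(\xi)\,\Delta$. The one place that requires care is case (a): it is tempting to stop once one knows $\hm(\chi)$ is associate to $q$, but one must exploit the monic-of-degree-$n$ normalization through the $\phi$-expansion to upgrade associateness to equality, and thus to genuine $\mu$-equivalence; the rest is bookkeeping with the formulas already established for $s$, $s'$, $R$, and $\rr$.
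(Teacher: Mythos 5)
Your proof is correct and follows essentially the same route as the paper: it combines the minimality criterion of Proposition \ref{minimal}, the irreducibility dichotomy of Proposition \ref{mu-irr}, the identity (\ref{degR}), and Theorem \ref{RR} for the residual ideals. The only cosmetic difference is in case (a), where you upgrade ``$\hm(\chi)$ associate to $q$'' to $\chi\smu\phi$ by an explicit $\phi$-expansion computation, while the paper simply invokes Lemma \ref{mid=sim} (and computes $\rr(\chi)$ as $\rr(\phi)$); both are fine.
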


\begin{proof}
If $\chi$ satisfies (a), then $\chi$ is a key polynomial by Lemma \ref{mid=sim}. 

Also, $\rr(\chi)=\rr(\phi)=\xi\dm$ by Theorem \ref{RR}, since $s(\phi)=1$ and $R(\phi)=1$.\e 

If $\chi$ satisfies (b), then $\deg(R(\chi))=s'(\chi)/e$ by (\ref{degR}), so that $\deg(\chi)=s'(\chi)n$, and $\chi$ is $\mu$-minimal by Proposition \ref{minimal}.
Also, $\chi$  is $\mu$-irreducible by Proposition \ref{mu-irr}.

Thus, $\chi$ is a key polynomial, and $\rr(\chi)=R(\chi)(\xi)\dm$ by Theorem \ref{RR}. \e

Conversely, suppose $\chi$ is a key polynomial for $\mu$. Since $\chi$ is $\mu$-minimal, it has 
$$
\deg(\chi)=s'(\chi)n,\qquad \mu(\chi)=\mu(\phi),
$$ 
by Proposition \ref{minimal}. 

Since $\chi$ is $\mu$-irreducible, it satisfies one of the conditions of Proposition \ref{mu-irr}.   \e

If $s(\chi)=s'(\chi)=1$, we get $\deg(\chi)=n$ and $\phi\mmu \chi$. Thus, $\chi\smu\phi$ by Lemma \ref{mid=sim}, and $\phi$ satisfies (a).\e

If $s(\chi)=0$ and $R(\chi)$ is irreducible in $\kal[y]$, then  $\deg(R(\chi))=s'(\chi)/e$ by (\ref{degR}). Thus, $\deg(\chi)=s'(\chi)n=en\deg(R(\chi))$, and $\chi$ satisfies (b).
\end{proof}

\begin{corollary}\label{gchi}
Let $\phi$ be a key polynomial for $\mu$, of minimal degree $n$.

Let $\chi\in\kx$ be a key polynomial such that $\chi\not\smu\phi$. Then, $\gchi=\gm$.
\end{corollary}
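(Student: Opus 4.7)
The plan is to split into two cases according to whether $\deg(\chi) > n$ or $\deg(\chi) = n$, in each case exhibiting enough elements of $\gchi$ to reconstruct the description $\gm = \gen{\g_n, \mu(\phi)}$ supplied by Lemma \ref{subgroup} applied to the minimal-degree key polynomial $\phi$.

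First I would apply Proposition \ref{charKP} to $\chi$: since $\chi \not\smu \phi$, case (a) is ruled out, so $\chi$ must satisfy (b), that is $s(\chi) = 0$, $R(\chi)$ is irreducible in $\kal[y]$, and $\deg(\chi) = en\deg(R(\chi))$. In particular $\deg(\chi) \ge n$, with equality only if $e = 1$ and $\deg(R(\chi)) = 1$.

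In the case $\deg(\chi) > n$, both $\phi$ and every polynomial of degree less than $n$ lie in $\kx_{\deg(\chi)}$. This gives the inclusions $\g_n \subset \gchi$ and $\mu(\phi) \in \gchi$. Since $\gchi$ is a subgroup of $\gm$ by Lemma \ref{subgroup} (applied to $\chi$), while the same lemma applied to $\phi$ yields $\gm = \gen{\g_n, \mu(\phi)}$, the two containments together force $\gchi = \gm$.

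In the remaining case $\deg(\chi) = n$, the relation $\deg(\chi) = en\deg(R(\chi))$ is only compatible with $e = 1$ and $\deg(R(\chi)) = 1$. But $e = 1$ means $\mu(\phi) \in \g_n$ by the definition of $e$, so $\gen{\g_n, \mu(\phi)} = \g_n$, and trivially $\gchi = \g_n$ since $\kx_{\deg(\chi)} = \kx_n$; hence again $\gchi = \gm$. The only delicate point in the whole argument is recognising that the boundary case $\deg(\chi) = n$ automatically forces $e = 1$ once case (a) has been excluded, but this drops out immediately from the degree formula in Proposition \ref{charKP}(b).
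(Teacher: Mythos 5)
Your proof is correct, but it takes a slightly different route from the paper's. The paper also starts from Lemma \ref{subgroup}, but applies it to $\chi$ itself, reducing everything to the single claim $\mu(\chi)\in\gchi$; this is obtained uniformly, with no case distinction, by combining Theorem \ref{bound} (which gives $\mu(\chi)=\frac{\deg(\chi)}{\deg(\phi)}\,\mu(\phi)$, since $\chi$ is $\mu$-minimal) with the degree formula of Proposition \ref{charKP}(b), yielding $\mu(\chi)=\deg(R(\chi))\,e\mu(\phi)\in\g_n\subset\gchi$. You instead apply Lemma \ref{subgroup} to $\phi$, writing $\gm=\gen{\g_n,\mu(\phi)}$, and check that the subgroup $\gchi$ contains $\g_n$ and $\mu(\phi)$: when $\deg(\chi)>n$ this is immediate because $\kx_n\cup\{\phi\}\subset\kx_{\deg(\chi)}$, and in the boundary case $\deg(\chi)=n$ your observation that $n=en\deg(R(\chi))$ forces $e=1$ (hence $\mu(\phi)\in\g_n=\gchi$) is exactly the right fix. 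What each approach buys: the paper's version avoids the case split and is a one-line computation, at the price of invoking Theorem \ref{bound}; yours dispenses with Theorem \ref{bound} altogether, using only Lemma \ref{subgroup} and Proposition \ref{charKP}(b), at the price of the (harmless) dichotomy on $\deg(\chi)$. In fact your two cases can be merged: $\g_n\subset\gchi$ always holds since $\deg(\chi)\ge n$, and $\mu(\phi)\in\gchi$ either because $\phi\in\kx_{\deg(\chi)}$ or because $e=1$ places $\mu(\phi)$ in $\g_n$.
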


\begin{proof}
By Lemma \ref{subgroup}, $\gm=\gen{\gchi,\mu(\chi)}$. Since $\deg(\chi)\ge n$, we clearly have $\g_n\subset \gchi$. 
By Theorem \ref{bound} and Proposition \ref{charKP},
$$
\mu(\chi)=\dfrac{\deg(\chi)}{\deg(\phi)}\,\mu(\phi)=\deg(R(\chi))e\mu(\phi)\in\g_n\subset \gchi.
$$
Hence, $\gm=\gchi$.  
\end{proof}

\begin{corollary}\label{e>1}
The two following conditions are equivalent.
\begin{enumerate}
\item $e>1$
\item All key polynomials of minimal degree are $\mu$-equivalent.
\end{enumerate}
\end{corollary}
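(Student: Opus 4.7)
The plan is to derive both implications directly from the characterization of key polynomials given by Proposition \ref{charKP}, combined with the minimality of $e$. Throughout, $\phi$ denotes a fixed key polynomial for $\mu$ of minimal degree $n$.

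For $(1)\Rightarrow(2)$, I take $\chi$ an arbitrary key polynomial of minimal degree $n$. Proposition \ref{charKP} forces $\chi$ into one of its two cases. Case (a) gives $\chi\smu\phi$ immediately. Case (b) requires $\deg(\chi)=en\deg(R(\chi))$ with $R(\chi)$ irreducible in $\kal[y]$, so $\deg(R(\chi))\ge 1$ and hence $\deg(\chi)\ge en$. Under the hypothesis $e>1$ this gives $\deg(\chi)\ge en>n$, contradicting $\deg(\chi)=n$. Thus only case (a) can occur, every key polynomial of minimal degree is $\mu$-equivalent to $\phi$, and by transitivity of $\sim_\mu$ all of them are $\mu$-equivalent to each other.

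For $(2)\Rightarrow(1)$, I argue by contrapositive: assuming $e=1$, I construct a key polynomial of degree $n$ that is not $\mu$-equivalent to $\phi$. Since $e=1$ means $\mu(\phi)=e\mu(\phi)\in\g_n$, there exists $a\in\kx_n\setminus\{0\}$ with $\mu(a)=\mu(\phi)$. Set $\chi=\phi+a$. As $\phi$ is monic of degree $n$ and $\deg(a)<n$, $\chi$ is monic of degree $n$, and its $\phi$-expansion reads $\chi=a+\phi$. The equality $\mu(a)=\mu(\phi)=\mu(\chi)$ yields $I_\phi(\chi)=\{0,1\}$, so $s_\phi(\chi)=0$ and $s'_\phi(\chi)=1$; by (\ref{degR}) the residual polynomial $R(\chi)$ is monic of degree $1$, hence irreducible in $\kal[y]$. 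Since moreover $\deg(\chi)=n=en\deg(R(\chi))$, Proposition \ref{charKP}(b) shows that $\chi$ is a key polynomial for $\mu$. Finally $\mu(\chi-\phi)=\mu(a)=\mu(\phi)$ is not strictly greater than $\mu(\phi)$, so $\chi\not\smu\phi$, contradicting (2).

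There is no real obstacle in this argument: the forward direction is pure degree arithmetic, and in the reverse direction the candidate $\chi$ is essentially read off from Proposition \ref{charKP}(b), the key point being that $e=1$ guarantees the existence of the perturbation $a\in\kx_n$ with $\mu(a)=\mu(\phi)$. The only subtlety worth checking carefully is the computation of $I_\phi(\chi)$ to confirm that $R(\chi)$ has degree exactly $1$ (and in particular is nonzero and monic), so that case (b) of Proposition \ref{charKP} genuinely applies to the constructed $\chi$.
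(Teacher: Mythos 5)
Your proof is correct and follows essentially the same route as the paper: the forward direction is the degree count $\deg(\chi)=en\deg(R(\chi))>n$ forcing case (a) of Proposition \ref{charKP}, and the reverse direction is the paper's own construction $\chi=\phi+a$ with $\mu(a)=\mu(\phi)$, verified via condition (b) of that proposition. Your extra computation of $I_\phi(\chi)=\{0,1\}$ to pin down $\deg(R(\chi))=1$ is a welcome (correct) elaboration of a step the paper states without detail.
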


\begin{proof}
Let $\phi$ be a key polynomial of minimal degree $n$.\e

If $e>1$ and $\chi$ is a key polynomial not $\mu$-equivalent to $\phi$, then Proposition \ref{charKP} shows that $\deg(\chi)=e\,n\deg(R(\chi))>n$. Hence, all key polynomials of degree $n$ are $\mu$-equivalent to $\phi$.\e 

If $e=1$, then $\mu(\phi)\in \g_n$, so that there exists $a\in \kx_n$ with $\mu(a)=\mu(\phi)$. The monic polynomial $\chi=\phi+a$ has degree $n$ and it is not $\mu$-equivalent to $\phi$. Also, $\deg(R(\chi))=1$, so that $R(\chi)$ is irreducible. Therefore, $\chi$ is a key polynomial for $\mu$, because it satisfies condition (b) of Proposition \ref{charKP}.  
\end{proof}

\subsection{Unique factorization in $\ggm$}\label{subsecKPUF}

If $\chi$ is a key polynomial for $\mu$, then $\rr(\chi)$ is a maximal ideal of $\dm$, by Proposition \ref{maxideal}.   
Let us study the fibers of the mapping $\rr\colon \kpm\to\mx(\dm)$.   

\begin{proposition}\label{samefiber}
Let $\phi$ be a key polynomial of minimal degree, and let $R=R_\phi$.

For any $\chi,\chi'\in\kpm$, the following conditions are equivalent:
\begin{enumerate}
\item $\chi\smu\chi'$.
\item $\hm(\chi)$ and $\hm(\chi')$ are associate in $\ggm$. 
\item $\chi\mmu\chi'$. 
\item $\rr(\chi)=\rr(\chi')$.
\item $R(\chi)=R(\chi')$.
\end{enumerate}

Moreover, these conditions imply \,$\deg(\chi)=\deg(\chi')$.
\end{proposition}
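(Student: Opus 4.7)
I would prove the equivalences cyclically, $(1)\Rightarrow(2)\Rightarrow(3)\Rightarrow(4)\Rightarrow(5)\Rightarrow(1)$. The first three implications are essentially formal. If $\chi\smu\chi'$, then $\hm(\chi)=\hm(\chi')$, so (2) is immediate; associates divide each other, giving (3); and $\chi\mmu\chi'$ entails $\hm(\chi')\ggm\subseteq\hm(\chi)\ggm$, hence $\rr(\chi')\subseteq\rr(\chi)$ after intersecting with $\dm$. Both residual ideals are proper maximal ideals by Proposition \ref{maxideal}, forcing equality.

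For $(4)\Rightarrow(5)$ I would apply the dichotomy of Proposition \ref{charKP} to each of $\chi,\chi'$: either $\chi\smu\phi$, in which case $\rr(\chi)=\xi\dm$ and $R(\chi)=1$, or $s(\chi)=0$ with $R(\chi)$ irreducible of positive degree, $R(\chi)(0)\ne0$, and $\rr(\chi)=R(\chi)(\xi)\dm$. Since $\dm=\kal[\xi]$ is a polynomial ring over a field (Theorem \ref{Dstructure}), a monic generator of a nonzero principal maximal ideal is unique; the nonzero-constant-term property in case (b) precludes confusion with the case (a) ideal $\xi\dm$. So $\rr(\chi)=\rr(\chi')$ forces both polynomials into the same case and yields $R(\chi)=R(\chi')$.

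The delicate step is $(5)\Rightarrow(1)$, because $\mu$-equivalence is detected by the triple $(I_\phi(\cdot),\,\nlc(\cdot),\,R(\cdot))$ via Corollary \ref{equivR}, yet we only have $R(\chi)=R(\chi')$. If $R(\chi)=R(\chi')=1$, Proposition \ref{charKP} puts both in case (a), hence both $\smu\phi$, and we are done. If $R(\chi)=R(\chi')=\psi$ with $\deg\psi\ge1$, both lie in case (b), so $s(\chi)=s(\chi')=0$ and $\deg(\chi)=\deg(\chi')=en\deg\psi$. The index set $I_\phi(\cdot)$ is determined by $s(\cdot)$ together with the support of the coefficients of $\psi$, hence coincides for $\chi,\chi'$. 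The remaining piece is $\nlc$, and here the monic-of-correct-degree hypothesis enters crucially: the leading $\phi$-expansion coefficient of a monic polynomial of degree $en\deg\psi$ equals $1$, so $\lrc(\chi)=1=\lrc(\chi')$, whence $\nlc(\chi)=\hm(u)^{-\deg\psi}=\nlc(\chi')$. Corollary \ref{equivR} then delivers $\chi\smu\chi'$.

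The final assertion $\deg(\chi)=\deg(\chi')$ follows from Proposition \ref{minimal}, which gives $\deg(\chi)=s'_\phi(\chi)n$ for every $\mu$-minimal polynomial, combined with Lemma \ref{sprime}, which shows that $s'_\phi$ is a $\mu$-equivalence invariant. The main obstacle, as indicated, is step $(5)\Rightarrow(1)$: one must recover $I_\phi$ and $\nlc$ from $R$ alone, and this works precisely because key polynomials are monic of the degree dictated by their residual polynomial, so nothing beyond $R$ is left to vary.
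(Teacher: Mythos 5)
Your proof is correct and follows essentially the same route as the paper: the same cyclic chain of implications, with (3)$\Rightarrow$(4) via maximality of residual ideals, (4)$\Rightarrow$(5) via Proposition \ref{charKP}, Theorem \ref{RR} and uniqueness of monic generators in $\kal[\xi]$, and (5)$\Rightarrow$(1) reduced to the dichotomy $R=1$ versus $R$ irreducible of positive degree. The only divergence is in the last step: the paper gets $\chi\mmu\chi'$ from item (2) of Corollary \ref{equivR} and upgrades to $\chi\smu\chi'$ by Lemma \ref{mid=sim} (equal degrees plus monicity), whereas you use monicity to compute $\lrc(\chi)=1=\lrc(\chi')$, hence $\nlc(\chi)=\nlc(\chi')$ and $I_\phi(\chi)=I_\phi(\chi')$, and conclude by item (1) of Corollary \ref{equivR}; both variants rest on the same facts and are equally valid.
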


\begin{proof}
The implications (1) $\imp$ (2) $\imp$ (3) are obvious.

Also, (3) $\imp$ $\rr(\chi')\subset \rr(\chi)$ $\imp$  (4), because $\rr(\chi')$ is a maximal ideal.    \e

Let us show that (4) implies (5). By Proposition \ref{charKP}, condition (4) implies that we have two possibilities for the pair $\chi$, $\chi'$:\e

(i) \ $\chi\smu\phi\smu\chi'$, or \e

(ii) \ $s(\chi)=s(\chi')=0$.\e

In the first case, we deduce (5) from Corollary \ref{equivR}. 

In the second case, condition (5) follows from Theorems \ref{RR}, \ref{Dstructure}, and the fact that $R(\chi)$, $R(\chi')$ are monic polynomials.\e

Let us show that (5) implies (1).  If $R(\chi)=R(\chi')=1$, then Proposition \ref{charKP} shows that $\chi\smu\phi\smu\chi'$.

If $R(\chi)=R(\chi')\ne1$, then Proposition \ref{charKP} shows that $s(\chi)=s(\chi')=0$ and
$$
\deg(\chi)=en\deg(R(\chi))=en\deg(R(\chi'))=\deg(\chi').
$$
Also, $\chi\mmu\chi'$ by item (2) of Corollary \ref{equivR}. Hence, 
$\chi\smu\chi'$ by Lemma \ref{mid=sim}. 

This ends the proof of the equivalence of all conditions.\e

Finally, (1) implies $\deg(\chi)=\deg(\chi')$ by the $\mu$-minimality of both polynomials. 
\end{proof}\e

\begin{theorem}\label{Max}
Suppose $\kpm\ne\emptyset$. The residual ideal mapping 
$$\rr\colon \kpm\,\lra\,\mx(\dm)$$ induces a bijection  
between $\kpm/\!\!\smu$ and\, $\mx(\dm)$.
\end{theorem}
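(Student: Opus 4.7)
The plan is to combine the machinery already developed: Proposition \ref{samefiber} for injectivity, Theorem \ref{Dstructure} for the structure of $\dm$, and Propositions \ref{charKP} and \ref{smallunits} for surjectivity. Well-definedness of the induced map $\kpm/\!\smu\to\mx(\dm)$ and its injectivity follow immediately from the equivalence (1)$\iff$(4) of Proposition \ref{samefiber}, while Proposition \ref{maxideal} ensures that $\rr$ lands in $\mx(\dm)$. Only surjectivity remains.

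By Theorem \ref{Dstructure}, $\dm=\kal[\xi]$ with $\xi$ transcendental over $\kal$, so every maximal ideal of $\dm$ is of the form $\psi(\xi)\dm$ for some monic irreducible $\psi\in\kal[y]$. The ideal $\xi\dm$ is hit by the fixed key polynomial $\phi$ itself, since Proposition \ref{charKP}(a) gives $\rr(\phi)=\xi\dm$. For $\psi\ne y$ we have $\psi(0)\ne 0$ by irreducibility; write $\psi=\psi_0+\psi_1 y+\cdots+\psi_{d-1}y^{d-1}+y^d$ with $\psi_0\ne 0$. I would construct a monic polynomial $\chi\in\kx$ of degree $end$ via its $\phi$-expansion
$$\chi \;=\; \phi^{de}+\sum_{j=0}^{d-1}\chi_{je}\,\phi^{je},$$
where all other coefficients vanish and each $\chi_{je}\in\kx_n$ is chosen so that $\hm(\chi_{je})=\hm(u)^{-(d-j)}\psi_j$ when $\psi_j\ne 0$, and $\chi_{je}=0$ otherwise. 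The prescribed initial form is a product of units in $\ggm$, hence a unit, and by Proposition \ref{smallunits} it is realized by some element of $\kx_n$. A direct computation using Proposition \ref{minimal0} and the definition of $R$ then yields $\mu(\chi)=de\mu(\phi)$, $s(\chi)=0$, $s'(\chi)=de$, and $R(\chi)=\psi$; since $\deg(\chi)=end=en\deg(R(\chi))$ and $\psi$ is irreducible, Proposition \ref{charKP}(b) certifies that $\chi\in\kpm$ and supplies $\rr(\chi)=\psi(\xi)\dm$.

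The only genuine obstacle is the realisation of the coefficients $\chi_{je}$ with prescribed initial forms inside $\kx_n$, which is exactly what Proposition \ref{smallunits} guarantees (and which in turn rests on the transcendence of $\hm(\phi)$ over $\ggv$ from Theorem \ref{phimindeg}); once this is in hand, everything else is routine bookkeeping with $\phi$-expansions.
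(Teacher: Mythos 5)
Your argument in the commensurable case is correct and is essentially the paper's: injectivity and well-definedness via Proposition \ref{samefiber} and Proposition \ref{maxideal}, the identification $\mx(\dm)=\{\psi(\xi)\dm\}$ via Theorem \ref{Dstructure}, the ideal $\xi\dm$ hit by $\phi$ itself, and for $\psi\ne y$ an explicit monic $\chi=\phi^{de}+\sum_{j<d}\chi_{je}\phi^{je}$ with $\hm(\chi_{je})=\hm(u)^{-(d-j)}\psi_j$, certified as a key polynomial with $\rr(\chi)=\psi(\xi)\dm$ by Proposition \ref{charKP}(b). The paper reaches the same $\chi$ by invoking Corollary \ref{Rconstruct} with $\zeta=\hm(u)^{-d}$ and observing that the top coefficient may be taken equal to $1$; you simply inline that construction, and your bookkeeping ($\mu(\chi)=de\mu(\phi)$ by Proposition \ref{minimal0}, $s(\chi)=0$ since $\psi(0)\ne0$, $R(\chi)=\psi$ from the definition of the residual coefficients, realizability of the unit initial forms in $\kxs{n}$ via Proposition \ref{smallunits}) is sound.

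The gap is that the theorem only assumes $\kpm\ne\emptyset$, while every tool you use ($R$, $\xi$, Theorem \ref{Dstructure}, Propositions \ref{charKP} and \ref{samefiber}) exists only when $\mu/v$ is commensurable; your proof says nothing when $\mu/v$ is incommensurable, and in that case the residual polynomial operator is not even defined, so the injectivity argument via Proposition \ref{samefiber} also does not apply. The paper disposes of this case first, via Theorem \ref{mainincomm}: there all key polynomials are $\mu$-equivalent, $\dm=\km$ is a field, so $\mx(\dm)=\{0\}$, and $\rr(\phi)=0$, whence the induced map is trivially a bijection between the single class in $\kpm/\!\smu$ and the single maximal ideal. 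Adding this two-line case distinction closes the gap; with it, your proof coincides with the paper's.
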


\begin{proof}
If $\mu/v$ is incommensurable, the statement follows from Theorem \ref{mainincomm}.\e

Suppose $\mu/v$ commensurable, and let $\phi$ be a key polynomial of minimal degree $n$.

By Proposition \ref{samefiber},  $\rr$ induces a 1-1 mapping between $\kpm/\!\!\smu$  and $\mx(\dm)$. 

Let us show that $\rr$ is onto. By Theorem \ref{Dstructure}, a maximal ideal in $\Delta$ is given by $\psi(\xi)\dm$ for some monic irreducible polynomial $\psi\in\kal[y]$.

If $\psi=y$, then $\ll=\rr(\phi)$, by Theorem \ref{RR}. 
If $\psi\ne y$, then it suffices to show the existence  of a key polynomial $\chi$ such that $R(\chi)=\psi$, by Proposition \ref{charKP}.  

Let $d=\deg(\psi)$. By Lemma \ref{Rconstruct}, there exists $\chi\in\kx$ such that $s(\chi)=0$, $\nlc(\chi)=\hm(u)^{-d}$ and $R(\chi)=\psi$.
Along the proof of that lemma, we saw that $\chi$ may be chosen to have $\phi$-expansion:
$$
\chi=a_0+a_1\phi^e+\cdots+a_d\phi^{de}, \qquad \deg(a_j)<n.
$$
Also, the condition on $a_d$ is $\hm(a_d)=\nlc(\chi)\hm(u)^d=1_{\ggm}$. Thus, we may choose $a_d=1$. Then $\deg(\chi)=den$, so that $\chi$ is a key polynomial because it satisfies condition (b) of Proposition \ref{charKP}. 
\end{proof}\e

\begin{theorem}\label{homogeneousprimes}
Let $\pset\subset\kpm$ be a set of representatives of key polynomials under $\mu$-equivalence. 
Then, the set $H\pset=\left\{\hm(\chi)\mid \chi\in \pset\right\}$ is a system of representatives of homogeneous prime elements of $\ggm$ up to associates.  

Also, up to units in $\ggm$, for any non-zero $f\in K[x]$, there is a unique factorization:
\begin{equation}\label{factorization}
f\smu \prod\nolimits_{\chi\in \pset}\chi^{a_\chi},\quad  a_\chi=s_\chi(f).
\end{equation}
\end{theorem}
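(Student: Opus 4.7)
I split the argument into two parts: (1) identifying the homogeneous primes of $\ggm$ with $H\pset$, and (2) producing the factorization of $\hm(f)$ and computing the exponents.

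For (1), every non-zero homogeneous element of $\ggm$ has the form $\hm(f)$ for a non-zero $f\in\kx$, by the very definition of the graded algebra; hence every homogeneous prime arises as $\hm(f)$ for a $\mu$-irreducible $f$. Proposition \ref{mu-irr} then forces $\hm(f)$ to be associate either to $q=\hm(\phi)$ or to $R(f)(\xi)$ with $R(f)$ monic irreducible in $\kal[y]$, where automatically $R(f)\ne y$ since the constant term of any residual polynomial is non-zero. In either case, Proposition \ref{charKP} exhibits a key polynomial realising the situation, whose $\mu$-equivalence class contains a unique representative $\chi\in\pset$ with $\hm(\chi)$ associate, via Theorem \ref{Hmug}, to the given prime. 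Distinct elements of $\pset$ are non-$\mu$-equivalent by choice, and Proposition \ref{samefiber} translates this into non-association of their $\hm$-images. This proves $H\pset$ is a system of representatives of the homogeneous primes.

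For (2), I would apply Theorem \ref{Hmug} to write $\hm(f)=\nlc(f)\,q^{s(f)}R(f)(\xi)$ and factor $R(f)=\prod_i\psi_i^{e_i}$ into monic irreducibles in $\kal[y]$; since $R(f)(0)\ne 0$, every $\psi_i\ne y$. By (1), let $\chi_0\in\pset$ denote the representative of the class of $\phi$ (so $\hm(\chi_0)$ is associate to $q$) and, for each $i$, let $\chi_i\in\pset$ be the representative with $R(\chi_i)=\psi_i$ (so $\hm(\chi_i)$ is associate to $\psi_i(\xi)$). Combining, $\hm(f)$ equals $\hm(\chi_0)^{s(f)}\prod_i\hm(\chi_i)^{e_i}$ up to a unit of $\ggm$, yielding the asserted factorization.

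The main obstacle is to verify that the exponents in this factorization really are $s_\chi(f)$ in the sense of Definition \ref{order}, and that they are uniquely determined. This rests on two facts: (a) the primes $q,\psi_1(\xi),\psi_2(\xi),\dots$ are pairwise coprime in $\ggm$; and (b) the multiplicity of each $\psi_i(\xi)$ in $R(f)(\xi)$ computed in $\ggm$ agrees with the multiplicity of $\psi_i$ in $R(f)$ inside $\kal[y]$. For (a), any $\psi(\xi)$ with $\psi(0)\ne 0$ is congruent to the unit $\psi(0)\in(\kal)^*\subset\ggm^*$ modulo the ideal $(\xi)=(q^e)$, so $q\nmid\psi(\xi)$; coprimality of distinct $\psi_i(\xi),\psi_j(\xi)$ is inherited from $\dm=\kal[\xi]$. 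For (b), any homogeneous divisor of the degree-zero element $R(f)(\xi)$ must itself be of degree zero and hence lie in $\dm$, so divisibility in $\ggm$ coincides with divisibility in $\dm$, which via Theorem \ref{Dstructure} matches the corresponding multiplicity in $\kal[y]$. This gives $s_{\chi_0}(f)=s(f)$ and $s_{\chi_i}(f)=e_i$, and uniqueness of the exponents follows automatically from $\ggm$ being an integral domain.
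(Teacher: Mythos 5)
Your route is essentially the paper's: classify the homogeneous primes via Proposition \ref{mu-irr}, realize the ones of the form $\psi(\xi)$ by key polynomials (the existence step is really Lemma \ref{Rconstruct} as used in the proof of Theorem \ref{Max}, not Proposition \ref{charKP} by itself, which is only a characterization), use Proposition \ref{samefiber} for pairwise non-association of $H\pset$, and obtain the factorization from Theorem \ref{Hmug} together with factoring $R(f)$ in $\kal[y]$ (the paper cites Lemma \ref{prime0} at this point). Your extra verification that the exponents are $s_\chi(f)$ is a detail the paper leaves implicit, and it is welcome.

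However, the justification you give for your step (b) is false as stated: a homogeneous divisor of a degree-zero homogeneous element need \emph{not} have degree zero. For instance $\hm(a)$, for $a\in K^*$ with $v(a)\ne0$, is a unit of nonzero degree dividing everything, and $q$ divides $\xi=\hm(u)q^{e}$ even though $\deg\xi=0$. What is true, and what you actually need, is that the \emph{cofactor} is homogeneous: if $w=zt$ with $w,z$ nonzero homogeneous, then decomposing $t$ into homogeneous components and using that $\ggm$ is a domain graded by a totally ordered group forces $t$ to be homogeneous of degree $\deg w-\deg z$. So if $\psi_i(\xi)^m$ divides $R(f)(\xi)$ in $\ggm$, the cofactor has degree zero, hence lies in $\dm$, and Theorem \ref{Dstructure} converts this into divisibility in $\kal[y]$. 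Alternatively, bypass (b): by Lemma \ref{prime0} each $\psi_i(\xi)$ is prime in $\ggm$, and distinct monic irreducibles yield non-associate primes (an associating unit would be homogeneous of degree zero, hence in $(\kal)^*$, contradicting monicness via Theorem \ref{Dstructure}; or quote Proposition \ref{samefiber}), so the exponents in any prime factorization are forced and equal $s_\chi(f)$ by Definition \ref{order}. The same caveat applies to your phrase ``coprimality is inherited from $\dm$'' in (a): coprimality does not pass to an overring in general; it is the non-associateness of these primes that does the work, while your argument that $q\nmid\psi_i(\xi)$ is fine. With these repairs the proposal is correct and matches the paper's proof.
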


\begin{proof}
All elements in $H\pset$ are prime elements by the definition of $\mu$-irreducibi\-lity. Also, they are pairwise non-associate by Proposition \ref{samefiber}. 

Let $\phi$ be a key polynomial of minimal degree $n$.

By Proposition \ref{mu-irr}, every homogeneous prime element is associate either to $\hm(\phi)$ (which belongs to $H\pset$), or to $\psi(\xi)$ for some irreducible polynomial $\psi\in\kal[y]$, $\psi\ne y$. 

In the latter case, along the proof of Theorem \ref{Max} we saw the existence of $\chi\in\kpm$ such that $s(\chi)=0$ and $R(\chi)=\psi$. 

Therefore, $\psi(\xi)=R(\chi)(\xi)$ is  associate to $\hm(\chi)\in H\pset$, by Theorem \ref{Hmug}.

Finally, every homogeneous element in $\ggm$ is associate to a product of homogeneous prime elements, by Theorem \ref{Hmug} and Lemma \ref{prime0}.
\end{proof}

\section{Augmentation of valuations}\label{secAugm}
We keep dealing with a valuation $\mu$ on $\kx$ with $\kpm\ne\emptyset$.

There are different procedures to \emph{augment} this valuation, in order to obtain valuations $\mu'$ on $\kx$ such that
$$\mu(f)\le\mu'(f),\qquad \forall\, f\in\kx,$$
after embedding the value groups of $\mu$ and $\mu'$ in a common ordered group.

In this section, we show how to single out a key polynomial of $\mu'$ of minimal degree.

As an application, all results of this paper apply to determine the structure of $\gg_{\mu'}$ and the set $\op{KP}(\mu')/\!\!\sim_{\mu'}$ in terms of this key polynomial. 

\subsection{Ordinary augmentations}
\begin{definition}\label{muprima}
Take $\chi\in \kpm$. Let $\gm\hookrightarrow\g'$ be an order-preserving embedding of $\gm$ into another ordered group, and choose $\ga\in\g'$ such that $\mu(\chi)<\ga$. 

The \emph{augmented valuation} of $\mu$ with respect to these data is the mapping 
$$\mu':\kx\rightarrow \g' \cup \left\{\infty\right\}
$$
assigning to any $g\in\kx$, with canonical $\chi$-expansion $g=\sum_{0\le s}g_s\chi^s$, the value
$$
\mu'(g)=\Min\{\mu(g_s)+s\ga\mid 0\le s\}.
$$
We use the notation $\mu'=[\mu;\chi,\ga]$. Note that $\mu'(\chi)=\ga$.
\end{definition}

The following proposition collects several results of \cite[sec. 1.1]{Vaq}.

\begin{proposition}\label{extension} 
\mbox{\null}
\begin{enumerate}
\item The mapping $\mu'=[\mu;\chi,\ga]$ is a valuation on $\kx$ extending $v$, with value group $\g_{\mu'}=\gen{\g_{\deg(\chi)},\ga}$. 
\item For all $f\in\kx$, we have $\mu(f)\le\mu'(f)$. 

Equality holds if and only if $\chi\nmid_{\mu}f$ or $f=0$.
\item If $\chi\nmid_{\mu}f$, then $H_{\mu'}(f)$ is a unit in $\gg_{\mu'}$.
\item The polynomial $\chi$ is a key polynomial for $\mu'$.
\item The kernel of the canonical homomorphism 
$$\ggm\lra\gg_{\mu'},\qquad a+\pset_\al(\mu)\,\longmapsto\, a+\pset_\al(\mu'),\ \forall\,\al\in\gm,$$ is the principal ideal of $\ggm$ generated by $\hm(\chi)$.
\end{enumerate}
\end{proposition}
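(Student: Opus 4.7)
The plan is to establish the five items in order. The bulk of the work lies in (1); items (2)--(5) then follow by combining Proposition \ref{minimal0}, Lemma \ref{ab}, and a B\'ezout trick. For (1), subadditivity $\mu'(f+g)\ge\min\{\mu'(f),\mu'(g)\}$ is immediate from $(f+g)_s=f_s+g_s$ together with the corresponding property of $\mu$. For multiplicativity, I would first settle the special case $f,g\in\kxchi$: writing $fg=c+d\chi$ with $c,d\in\kxchi$, Lemma \ref{ab}(1) gives $\mu(c)\le\mu(d\chi)$, so $\mu'(fg)=\min(\mu(c),\mu(d)+\gamma)=\mu(c)=\mu(f)+\mu(g)=\mu'(f)+\mu'(g)$, using $\gamma>\mu(\chi)$. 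In general, expanding $fg=\sum_{s,t}f_sg_t\chi^{s+t}$ and applying the special case to each monomial gives $\mu'(f_sg_t\chi^{s+t})=\mu'(f_s\chi^s)+\mu'(g_t\chi^t)$, so the triangle inequality yields $\mu'(fg)\ge\mu'(f)+\mu'(g)$. The reverse inequality is the delicate half: pick $(s_0,t_0)$ jointly minimizing $\mu(f_s)+s\gamma$ and $\mu(g_t)+t\gamma$ with $s_0+t_0$ chosen extremally, and verify that the coefficient of $\chi^{s_0+t_0}$ in the $\chi$-expansion of $fg$ has $\mu$-value $\mu(f_{s_0})+\mu(g_{t_0})$, with $\gamma>\mu(\chi)$ ruling out competing contributions from the $d_{s,t}$ terms at index $s_0+t_0-1$. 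The value-group claim then follows from $\mu'|_{\kxchi}=\mu|_{\kxchi}$ and $\mu'(\chi)=\gamma$.

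For (2), Proposition \ref{minimal0}(2) gives $\mu(f)=\min_s\{\mu(f_s)+s\mu(\chi)\}$, and term-by-term comparison with $\gamma>\mu(\chi)$ yields $\mu(f)\le\mu'(f)$. The equality case splits via Proposition \ref{minimal0}(3): if $\chi\nmid_\mu f$, the minimum is achieved at $s=0$ and both values equal $\mu(f_0)$; if $\chi\mid_\mu f$, then $\mu(f_0)>\mu(f)$ and for each $s\ge 1$ with $f_s\ne 0$ the strict inequality $\mu(f_s)+s\gamma>\mu(f_s\chi^s)\ge\mu(f)$ holds, making every term in $\mu'(f)$ strictly exceed $\mu(f)$. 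For (3), item (2) gives $\mu'(f)=\mu(f_0)$, and a short calculation using $\mu(f_0)\le\mu(f_s)+s\gamma-\gamma+\mu(\chi)$ shows $f\sim_{\mu'}f_0$. Since $\chi$ is irreducible by Lemma \ref{ab}(2), $f_0$ and $\chi$ admit a B\'ezout identity $f_0 b+\chi d=1$ with $b,d\in\kxchi$. The $\chi$-expansion argument from Lemma \ref{subgroup} gives $\mu(f_0 b)=0$, hence $\mu'(f_0 b)=0$; meanwhile $\mu'(\chi d)=\gamma+\mu(d)\ge\gamma-\mu(\chi)>0$. Therefore $H_{\mu'}(f_0 b)=1$ in $\gg_{\mu'}$, proving that $H_{\mu'}(f_0)$, and hence $H_{\mu'}(f)$, is a unit.

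For (4), $\mu'$-minimality of $\chi$ is direct: if $f\in\kxchi\setminus\{0\}$ satisfied $f\sim_{\mu'}\chi g$, the $\chi$-expansion of $f-\chi g$ (whose $0$-th coefficient is $f$ itself) would force $\mu'(f-\chi g)\le\mu(f)=\mu'(f)$, contradicting the strict inequality required. For $\mu'$-irreducibility, suppose $\chi\nmid_{\mu'}f$ and $\chi\nmid_{\mu'}g$; Proposition \ref{minimal0}(3) applied to $\mu'$ (now legitimate) gives $\mu'(f)=\mu(f_0)$, $\mu'(g)=\mu(g_0)$. Combining multiplicativity of $\mu$ and $\mu'$ with $\mu$-irreducibility of $\chi$ applied to $f_0,g_0\in\kxchi$ yields $\mu'(fg)=\mu(f_0g_0)=\mu((f_0g_0)_0)=\mu((fg)_0)=\mu'((fg)_0)$, so $\chi\nmid_{\mu'}fg$. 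Finally for (5), the canonical map is well-defined since $\mu\le\mu'$ implies $\pset_\alpha(\mu)\subset\pset_\alpha(\mu')$; an element $H_\mu(a)$ lies in the kernel iff $\mu'(a)>\mu(a)$, iff $\chi\mid_\mu a$ by (2), iff $H_\mu(a)\in H_\mu(\chi)\ggm$; conversely $\mu'(\chi)>\mu(\chi)$ forces $H_\mu(\chi)$ to map to $0$. The main obstacle is the reverse multiplicative inequality in (1); once secured, the remaining items reduce to B\'ezout-style manipulations and the $\chi$-expansion bookkeeping already developed in Section \ref{secKP}.
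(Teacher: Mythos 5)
Your proposal is essentially correct, but note that the paper itself does not prove Proposition \ref{extension}: it is stated as a compilation of results from \cite[sec.~1.1]{Vaq}, so there is no internal proof to compare against. What you have done is reconstruct the classical MacLane--Vaqui\'e argument using only the paper's own Section \ref{secKP} toolkit (Proposition \ref{minimal0}, Lemma \ref{ab}, and the B\'ezout device from Lemma \ref{subgroup}), which is in the spirit of the paper and makes the statement self-contained; items (2)--(5) are handled exactly as one would hope, and your reductions (e.g.\ $f\sim_{\mu'}f_0$ when $\chi\nmid_\mu f$, and the transfer of $\mu$-irreducibility of $\chi$ to $\mu'$ through the $0$-th coefficients) are sound.

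The one step you leave as a plan, the inequality $\mu'(fg)\le\mu'(f)+\mu'(g)$, is correct as sketched but needs the tie-breaking written out: taking $s_0=\Min\{s\mid \mu(f_s)+s\ga=\mu'(f)\}$ and $t_0=\Min\{t\mid \mu(g_t)+t\ga=\mu'(g)\}$ (or both maxima), any other pair $(s,t)$ with $s+t=s_0+t_0$ has $\mu(f_s)+\mu(g_t)>\mu(f_{s_0})+\mu(g_{t_0})$ because it cannot lie in both minimizing sets, while the carried terms $d_{s,t}$ coming from $s+t=s_0+t_0-1$ satisfy $\mu(d_{s,t})\ge\mu(f_s)+\mu(g_t)-\mu(\chi)$ by Lemma \ref{ab}, and are then strictly dominated thanks to $\ga>\mu(\chi)$; both observations are needed to conclude that the coefficient of $\chi^{s_0+t_0}$ in the $\chi$-expansion of $fg$ has $\mu$-value exactly $\mu(f_{s_0})+\mu(g_{t_0})$. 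Two smaller points to make explicit: in (4), primality of $H_{\mu'}(\chi)\gg_{\mu'}$ also requires this ideal to be proper, which follows from the $\mu'$-minimality you already established (since $\chi\nmid_{\mu'}1$); and in (5) your computation identifies only the homogeneous elements of the kernel, so you should add that the kernel of this graded homomorphism is a homogeneous ideal, whence comparing homogeneous elements with those of $\hm(\chi)\ggm$ suffices. With these details filled in, the argument is complete.
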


\begin{corollary}
The polynomial $\chi$ is a key polynomial for $\mu'$, of minimal degree.
\end{corollary}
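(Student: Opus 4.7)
The plan is quite short, since most of the work has already been done in Proposition \ref{extension}. The first claim, namely that $\chi$ is a key polynomial for $\mu'$, is exactly item (4) of that proposition. So what remains is only the minimality of the degree.

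I would argue by contradiction. Suppose $\chi'$ is a key polynomial for $\mu'$ with $\deg(\chi')<\deg(\chi)$. A key polynomial is $\mu'$-irreducible, so $H_{\mu'}(\chi')\gg_{\mu'}$ is a non-zero prime ideal; in particular $H_{\mu'}(\chi')$ is not a unit in $\gg_{\mu'}$ (and $\chi'$ must be non-constant, ruling out the trivial possibility $\deg(\chi')=0$).

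On the other hand, $\chi'$ is a non-zero polynomial in $\kx$ with $\deg(\chi')<\deg(\chi)$, and $\chi$ is $\mu$-minimal because it is a key polynomial for $\mu$. Thus $\chi\nmid_\mu\chi'$. Applying item (3) of Proposition \ref{extension} to $f=\chi'$ yields that $H_{\mu'}(\chi')$ is a unit in $\gg_{\mu'}$, contradicting the previous paragraph. Hence no key polynomial for $\mu'$ can have degree strictly smaller than $\deg(\chi)$, and $\chi$ is of minimal degree among key polynomials for $\mu'$.

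The only substantive input is the interplay between the $\mu$-minimality of $\chi$ and the unit-detection statement Proposition \ref{extension}(3); there is no real obstacle, as the hard work is already packed into Proposition \ref{extension}.
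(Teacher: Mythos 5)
Your proposal is correct and follows exactly the paper's own argument: item (4) of Proposition \ref{extension} gives that $\chi$ is a key polynomial for $\mu'$, and minimality follows because any $f$ of smaller degree satisfies $\chi\nmid_\mu f$ by the $\mu$-minimality of $\chi$, so item (3) makes $H_{\mu'}(f)$ a unit in $\gg_{\mu'}$, ruling it out as a key polynomial. No differences worth noting.
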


\begin{proof}
For any polynomial $f\in\kx$ with $\deg(f)<\deg(\chi)$, we have $\chi\nmid_{\mu}f$ because $\chi$ is $\mu$-minimal. By item (3) of Proposition \ref{extension}, $H_{\mu'}(f)$ is a unit in $\gg_{\mu'}$, so that $f$ cannot be a key polynomial for $\mu'$.   
\end{proof}

\subsection{Limit augmentations}
Consider a totally ordered set $A$, not containing a maximal element.

A \emph{continuous MacLane chain} based on $\mu$, and parameterized by $A$, is a family $\left(\mua\right)_{\al\in A}$ of augmented valuations
$$
\mua=[\mu;\phi_\al,\ga_\al],\qquad \phi_\al\in\kpm,\quad \mu(\phi_\al)<\ga_\al\in\gm,
$$
for all $\al\in A$, satisfying the following conditions:
\begin{enumerate}
\item $\deg(\phi_\al)=d$ is independent of $\al\in A$.
\item The mapping $A\to \gm$,  $\al\mapsto \ga_\al$ is an order-preserving embedding of $A$ in $\gm$. 
\item For all $\al<\beta$ in $A$, we have
$$
\phi_\beta\in \op{KP}(\mua),\qquad \phi_\al\not\sim_{\mua}\phi_\beta,\qquad \mu_\beta=[\mua;\phi_\beta,\ga_\beta].
$$
\end{enumerate}

In Vaqui\'e's terminology, $\left(\mua\right)_{\al\in A}$ is a ``famille continue de valuations augment\'ees it\'er\'ees" \cite{Vaq}.

\begin{definition}
A polynomial $f\in\kx$ is \emph{$A$-stable} if there exists $\al_0\in A$ such that 
$$
\mu_{\al_0}(f)=\mua(f),\qquad \forall\,\al\ge \al_0.
$$

In this case, we denote by $\mu_A(f)$ this stable value.
\end{definition}

\begin{lemma}\label{nonstable}
 If $f\in\kx$ is not $A$-stable, then $\mua(f)<\mu_\beta(f)$ for all $\al<\beta$ in $A$.
\end{lemma}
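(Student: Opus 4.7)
The plan is to establish the contrapositive: if $\mu_\alpha(f)=\mu_\beta(f)$ for some $\alpha<\beta$ in $A$, then $f$ is $A$-stable. Since $\mu_\gamma = [\mu_\alpha;\phi_\gamma,\gamma_\gamma]$ whenever $\alpha<\gamma$ in $A$, Proposition \ref{extension}(2) makes the function $\gamma\mapsto\mu_\gamma(f)$ non-decreasing. Hence $\mu_\gamma(f)=\mu_\alpha(f)$ is automatic for $\alpha\le\gamma\le\beta$, and the task reduces to propagating the equality past $\beta$.

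Consider the $\phi_\beta$-expansion $f=\sum_s f_s\phi_\beta^s$. By Proposition \ref{extension}(2), the hypothesis $\mu_\alpha(f)=\mu_\beta(f)$ forces $\phi_\beta\nmid_{\mu_\alpha}f$, which by Proposition \ref{minimal0}(3) is the statement $\mu_\alpha(f)=\mu_\alpha(f_0)$. The key step is to upgrade this value equality to the $\mu_\beta$-equivalence $f\sim_{\mu_\beta} f_0$. Since $\deg(f_0)<d=\deg(\phi_\beta)$, we have $\mu_\beta(f_0)=\mu_\alpha(f_0)$. For each $s\ge 1$, the $\mu_\alpha$-minimality of $\phi_\beta$ (applied via Proposition \ref{minimal0}(2)) gives $\mu_\alpha(f_s)+s\mu_\alpha(\phi_\beta)\ge\mu_\alpha(f_0)$, and since $\gamma_\beta>\mu_\alpha(\phi_\beta)$,
\[
\mu_\beta(f_s\phi_\beta^s)=\mu_\alpha(f_s)+s\gamma_\beta\ge\mu_\alpha(f_0)+s(\gamma_\beta-\mu_\alpha(\phi_\beta))>\mu_\alpha(f_0).
\]
Therefore $\mu_\beta(f-f_0)>\mu_\beta(f_0)=\mu_\beta(f)$, i.e. $f\sim_{\mu_\beta}f_0$.

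Now fix any $\gamma>\beta$. By property (3) of the MacLane chain, $\phi_\gamma\in\op{KP}(\mu_\beta)$, so $\phi_\gamma$ is $\mu_\beta$-minimal of degree $d$. Since $\deg(f_0)<d$, we conclude $\phi_\gamma\nmid_{\mu_\beta}f_0$, and the $\mu_\beta$-equivalence $f\sim_{\mu_\beta}f_0$ transports this to $\phi_\gamma\nmid_{\mu_\beta}f$. Proposition \ref{extension}(2) applied to the augmentation $\mu_\gamma=[\mu_\beta;\phi_\gamma,\gamma_\gamma]$ then yields $\mu_\gamma(f)=\mu_\beta(f)=\mu_\alpha(f)$, proving $A$-stability of $f$ and contradicting the hypothesis.

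The main obstacle is the central paragraph: bridging from the hypothesis (a mere $\mu_\alpha$-value equality) to the stronger $\mu_\beta$-equivalence $f\sim_{\mu_\beta}f_0$. Once this is in place, the propagation past $\beta$ is automatic from the $\mu_\beta$-minimality of the later key polynomials $\phi_\gamma$.
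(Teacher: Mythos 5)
Your proof is correct, and it follows the paper's overall skeleton — prove the contrapositive, show that an equality $\mu_\alpha(f)=\mu_\beta(f)$ forces $\phi_\gamma\nmid_{\mu_\beta}f$ for every $\gamma>\beta$ in $A$, and conclude $\mu_\beta(f)=\mu_\gamma(f)$ from Proposition \ref{extension}(2) — but you execute the crucial middle step by a different mechanism. The paper invokes items (3) and (5) of Proposition \ref{extension}: from $\phi_\beta\nmid_{\mu_\alpha}f$ it concludes that $H_{\mu_\beta}(f)$ is a unit in $\mathcal{G}_{\mu_\beta}$; its image under the canonical homomorphism $\mathcal{G}_{\mu_\beta}\to\mathcal{G}_{\mu_\gamma}$ is then a unit, hence nonzero, so $H_{\mu_\beta}(f)$ lies outside the kernel $H_{\mu_\beta}(\phi_\gamma)\mathcal{G}_{\mu_\beta}$, i.e.\ $\phi_\gamma\nmid_{\mu_\beta}f$. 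You avoid the graded-algebra homomorphism entirely: a direct computation with the $\phi_\beta$-expansion (using the $\mu_\alpha$-minimality of $\phi_\beta$ and $\gamma_\beta>\mu_\alpha(\phi_\beta)$) yields $f\sim_{\mu_\beta}f_0$ with $\deg f_0<d$, and then the $\mu_\beta$-minimality of $\phi_\gamma$, whose degree is $d$ by chain condition (1), gives $\phi_\gamma\nmid_{\mu_\beta}f_0$, hence $\phi_\gamma\nmid_{\mu_\beta}f$, since $\mu_\beta$-divisibility depends only on $H_{\mu_\beta}(f)$. Your route is more elementary — it needs only item (2) of Proposition \ref{extension} together with Proposition \ref{minimal0} — at the cost of re-deriving by hand what item (3) packages, namely that $f$ is $\mu_\beta$-equivalent to a polynomial of degree below that of the later key polynomials. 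Two degenerate cases deserve one clause each: $f=0$ is trivially $A$-stable (this is the ``or $f=0$'' alternative in item (2)), and $f_0=0$ cannot occur once $f\ne0$ and $\phi_\beta\nmid_{\mu_\alpha}f$, since exact divisibility implies $\mu_\alpha$-divisibility; neither affects the argument.
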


\begin{proof}
Let us show that the equality $\mua(f)=\mu_{\beta}(f)$ for some $\al<\beta$ in $A$, implies that $f$ is $A$-stable.  

Since $\mu_{\beta}=[\mua;\phi_\beta,\ga_\beta]$, Proposition \ref{extension} shows that $\phi_\beta\nmid_{\mua}f$ and $H_{\mu_{\beta}}(f)$ is a unit in $\gg_{\mu_{\beta}}$.
Hence, for all $\delta\ge\beta$, the image of $H_{\mu_{\beta}}(f)$ in $\gg_{\mu_\delta}$ is a unit. 

Since $\mu_\delta=[\mu_{\beta};\phi_\delta,\ga_\delta]$, item (5) of Proposition \ref{extension} shows that $\phi_\delta\nmid_{\mu_{\beta}}f$. Hence, $\mu_{\beta}(f)=\mu_\delta(f)$, again by Proposition \ref{extension}. Thus, $f$ is $A$-stable. 
\end{proof}\bigskip

If all polynomials in $\kx$ are $A$-stable, then $\mu_A=\lim_{\al\in A}\mua$ has an obvious meaning, and $\mu_A$ is a valuation on $\kx$.

If there are polynomials which are not $A$-stable, there are still some particular situations in which $\left(\mua\right)_{\al\in A}$ converges to a valuation (or semivaluation) on $\kx$.

However, regardless of the fact that  $\left(\mua\right)_{\al\in A}$ converges or not, non-stable polynomials may be used to define \emph{limit augmented} valuations of this continuous MacLane chain.

Let us assume that not all polynomials in $\kx$ are $A$-stable. 

We take a monic $\phi\in\kx$ which is not $A$-stable, and has minimal degree among all polynomials having this property.

Since the product of $A$-stable polynomials is $A$-stable, $\phi$ is irreducible in $\kx$.

\begin{lemma}\label{minimalA}
Let $f\in\kx$ be a non-zero polynomial, with canonical $\phi$-expansion $f=\sum_{0\le s}f_s\phi^s$. Then, there exist an index $s_0$ 
and an element $\al_0\in A$ such that
$$
\mua(f)=\mua(f_{s_0}\phi^{s_0})<\mua(f_s\phi^s),\qquad \forall\,s\ne s_0,\ \forall\,\al\ge\al_0.
$$
\end{lemma}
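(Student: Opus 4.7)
The plan is to combine two facts: by minimality of $\deg(\phi)$ among polynomials that are not $A$-stable, every coefficient in the $\phi$-expansion of $f$ is $A$-stable; and by Lemma~\ref{nonstable} applied to $\phi$ itself, the value $\mua(\phi)$ is strictly increasing in $\al$. First I would observe that each $f_s$ has $\deg(f_s) < \deg(\phi)$, hence is $A$-stable, and the set $S = \{s\ge 0 : f_s \ne 0\}$ is finite. Taking a common stability threshold over finitely many indices, there is $\al_1 \in A$ such that for all $\al \ge \al_1$ and all $s \in S$ the value $\mua(f_s) = c_s$ is constant.

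For $\al \ge \al_1$ and $s \in S$, I then have $\mua(f_s\phi^s) = c_s + s\,\mua(\phi)$. For any two distinct $s, s' \in S$, the difference
$$
\mua(f_{s'}\phi^{s'}) - \mua(f_s\phi^s) = (c_{s'} - c_s) + (s'-s)\,\mua(\phi)
$$
is a strictly monotonic function of $\al$ (multiplication by the nonzero integer $s'-s$ preserves strict monotonicity of $\mua(\phi)$ in the ordered group $\gm$), so it vanishes for at most one value of $\al$. Consequently, for each of the finitely many pairs $\{s, s'\}\subset S$ there is a threshold in $A$ past which the comparison between the two monomial values has a fixed strict sign. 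Since $S$ is finite and $A$ has no maximum, I can pick a common $\al_0 \in A$ past all these pairwise thresholds; for every $\al \ge \al_0$ the set $\{\mua(f_s\phi^s) : s \in S\}$ is then strictly totally ordered in a fixed way.

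Let $s_0 \in S$ be the unique minimizer of this stable ordering. Then $\mua(f_{s_0}\phi^{s_0}) < \mua(f_s\phi^s)$ for all $s \in S \setminus \{s_0\}$ and all $\al \ge \al_0$, and the strong triangle inequality applied to $f = \sum_s f_s\phi^s$ yields $\mua(f) = \mua(f_{s_0}\phi^{s_0})$. I expect the only real subtlety to be simultaneous stabilization of the pairwise comparisons, but strict monotonicity of $\mua(\phi)$ obtained from Lemma~\ref{nonstable} limits each pair to at most one sign change, so only finitely many thresholds have to be cleared.
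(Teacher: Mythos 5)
Your proof is correct and follows essentially the same route as the paper: both arguments rest on the $A$-stability of the coefficients $f_s$ (from the degree minimality of $\phi$) together with the strict growth of $\mua(\phi)$ supplied by Lemma~\ref{nonstable}, plus finiteness of the set of indices and the absence of a maximal element in $A$. The only difference is bookkeeping: you stabilize the finitely many pairwise comparisons of monomial values, while the paper tracks the least index attaining the minimum and shows it can only decrease, hence eventually the minimizer is unique and fixed.
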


\begin{proof}
Since all coefficients $f_s$ have degree less than $\deg(\phi)$, they are all $A$-stable. Let us take $\al_1$ sufficiently large so that $\mua(f_s)=\mu_A(f_s)$ for all $s\ge0$ and all $\al\ge\al_1$.

For every $\al\in A$, $\al\ge\al_1$, let
$$
\delta_\al=\mn\{\mua(f_s\phi^s)\mid 0\le s\},\qquad I_\al=\left\{s\mid \mua(f_s\phi^s)=\delta_\al\right\},\qquad s_\al=\mn(I_\al).
$$
For any index $s$ we have
\begin{equation}\label{s0}
\mua(f_{s_\al}\phi^{s_\al})=\mu_A(f_{s_\al})+s_\al\mua(\phi)\le \mua(f_s\phi^s)=\mu_A(f_s)+s\mua(\phi). 
\end{equation}

Since $\phi$ is not $A$-stable, Lemma \ref{nonstable} shows that $\mua(\phi)<\mu_\beta(\phi)$, for all $\beta>\al$, $\beta\in A$. 
Thus, if we replace $\mua$ with $\mu_\beta$ in (\ref{s0}), we get a strict inequality for all $s>s_\al$, because the left-hand side of (\ref{s0}) increases by $s_\al(\mu_\beta(\phi)-\mua(\phi))$, while the right-hand side increases by $s(\mu_\beta(\phi)-\mua(\phi))$.

Therefore, either $I_\beta=\{s_\al\}$, or $s_\beta=\mn(I_\beta)<s_\al$. 

Since $A$ contains no maximal element, we may consider a strictly increasing infinite sequence of values of $\beta\in A$. There must be an $\al_0\in A$ such that 
$$I_\al=\{s_0\},\qquad \forall\,\al\ge\al_0,
$$
because the set of indices $s$ is finite.
\end{proof}

\begin{definition}
For any $f\in\kx$, we say that $f$ is $A$-divisible by $\phi$, and we write $\phi\mid_A f$ if there exists $\al_0\in A$ such that $\phi\mid_{\mua}f$ for all $\al\ge\al_0$.
\end{definition}

\begin{lemma}\label{stableChar}
For any $f\in\kx$ with canonical $\phi$-expansion $f=\sum_{0\le s}f_s\phi^s$, the following conditions are equivalent:
\begin{enumerate}
\item $f$ is $A$-stable.
\item There exists $\al_0\in A$ such that
$$
\mua(f)=\mua(f_0)<\mua(f_s\phi^s),\quad \forall s>0, \ \forall\,\al\ge\al_0.
$$
\item $\phi\nmid_Af$.
\end{enumerate}
\end{lemma}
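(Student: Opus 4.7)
The plan is to apply Lemma \ref{minimalA} to $f$, extracting an integer $s_0\ge 0$ and some $\alpha_0\in A$ such that
$$\mu_\alpha(f)=\mu_\alpha(f_{s_0}\phi^{s_0})<\mu_\alpha(f_s\phi^s),\qquad s\ne s_0,\ \alpha\ge\alpha_0.$$
After enlarging $\alpha_0$ I will further assume $\mu_\alpha(f_s)=\mu_A(f_s)$ for every nonzero $f_s$; this is legitimate since each $f_s$ has degree less than $\deg(\phi)$ and is therefore $A$-stable. Then I will show that all three conditions (1), (2), (3) are equivalent to the single condition $s_0=0$.

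The equivalence $(2)\Longleftrightarrow s_0=0$ is tautological, as $(2)$ is precisely the statement that the unique minimizer of $\mu_\alpha(f_s\phi^s)$ is the index $s=0$. For $(1)\Longleftrightarrow s_0=0$, the formula $\mu_\alpha(f)=\mu_A(f_{s_0})+s_0\mu_\alpha(\phi)$ is valid for $\alpha\ge\alpha_0$; since $\phi$ is not $A$-stable, Lemma \ref{nonstable} gives that $\mu_\alpha(\phi)$ is strictly increasing in $\alpha$, so $\mu_\alpha(f)$ stabilizes if and only if $s_0=0$.

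For $(3)\Longleftrightarrow s_0=0$, the implication $s_0\ge 1\Longrightarrow\phi\mid_A f$ is direct: $f\sim_{\mu_\alpha}f_{s_0}\phi^{s_0}$ yields $H_{\mu_\alpha}(\phi)\mid H_{\mu_\alpha}(f)$ at every $\alpha\ge\alpha_0$. The converse, $s_0=0\Longrightarrow\phi\nmid_A f$, is the main obstacle, and I will argue by contradiction. Suppose $\phi\mid_A f$. Since $f\sim_{\mu_\alpha}f_0$ for large $\alpha$, we obtain $\phi\mid_{\mu_\alpha}f_0$ for all $\alpha\ge\alpha_1$ (some $\alpha_1\ge\alpha_0$). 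Fix one such $\alpha$ and choose $h\in\kx$ with $f_0\sim_{\mu_\alpha}\phi h$, so $\mu_\alpha(f_0-\phi h)>\mu_A(f_0)$. For any $\beta>\alpha$ in $A$, Proposition \ref{extension}(2) gives $\mu_\beta(f_0-\phi h)\ge\mu_\alpha(f_0-\phi h)>\mu_A(f_0)=\mu_\beta(f_0)$; combined with the ultrametric inequality this forces $\mu_\beta(\phi h)=\mu_A(f_0)$, i.e., $\mu_\beta(\phi)+\mu_\beta(h)$ is independent of $\beta>\alpha$. But $\mu_\beta(\phi)$ is strictly increasing in $\beta$ by Lemma \ref{nonstable}, while $\mu_\beta(h)$ is non-decreasing for the fixed polynomial $h$ by Proposition \ref{extension}(2); their sum cannot be constant, contradicting the assumption.
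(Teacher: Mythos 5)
Your proof is correct. The treatment of (1) and (2) is the same as the paper's: both reduce to the condition $s_0=0$ for the index produced by Lemma \ref{minimalA}, using the strict growth of $\mua(\phi)$ from Lemma \ref{nonstable}. Where you genuinely diverge is the equivalence with (3). The paper asserts that $\phi$ is $\mua$-minimal for $\al\ge\al_0$ and then applies item (3) of Proposition \ref{minimal0}, so that $\phi\nmid_{\mua}f$ becomes equivalent to $\mua(f)=\mua(f_0)$, i.e.\ to $s_0=0$, uniformly for all large $\al$; this makes (2)$\Leftrightarrow$(3) a one-liner once that minimality is granted. You instead argue both directions directly: for $s_0\ge1$, the relation $f\sim_{\mua}f_{s_0}\phi^{s_0}$ immediately gives $\phi\mid_A f$; and for $s_0=0$ you exclude $\phi\mid_A f$ by contradiction, writing $f_0\sim_{\mua}\phi h$ and noting that the ultrametric inequality would force $\mu_\beta(\phi)+\mu_\beta(h)$ to equal the constant $\mu_A(f_0)$ for all $\beta>\al$, which is impossible since $\mu_\beta(\phi)$ grows strictly (Lemma \ref{nonstable}) while $\mu_\beta(h)$ is non-decreasing (Proposition \ref{extension}(2)), and $A$ has no maximal element. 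Your route is a bit longer, but it is more self-contained: it never needs the $\mua$-minimality of $\phi$, a point the paper invokes rather tersely, and in effect your monotonicity argument is the kind of reasoning one would use to justify that minimality anyway.
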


\begin{proof}
By Lemma \ref{minimalA}, there exist an index $s_0$ and an element $\al_0\in A$ such that 
$\mua(f_s)=\mu_A(f_s)$ for all $s$, and
$$\mua(f)=\mua\left(f_{s_0}\phi^{s_0}\right)=\mu_A(f_{s_0})+s_0\mua(\phi)<\mua\left(f_s\phi^s\right),\qquad \forall\,s\ne s_0.$$

By Lemma \ref{nonstable}, $\mua(\phi)$ grows strictly with $\al$. Hence, condition (1) is equivalent to $s_0=0$, which is in turn equivalent to condition (2).

On the other hand, Proposition \ref{minimal0} shows that $\phi$ is $\mua$-minimal for all $\al\ge\al_0$. Hence, again by Proposition \ref{minimal0}, the three following conditions are equivalent:
\begin{itemize}
\item $\phi\nmid_{\mua}f$ for all $\al\ge\al_0$. 
\item $\phi\nmid_{\mua}f$ for some $\al\ge\al_0$.
\item $s_0=0$.
\end{itemize}

Hence, conditions (2) and (3) are equivalent too.
\end{proof}

\begin{definition}\label{muprimalim}
Take $\phi\in \kx$ a monic polynomial with minimal degree among all polynomials which are not $A$-stable.

Let $\gm\hookrightarrow\g'$ be an order-preserving embedding of $\gm$ into another ordered group, and choose $\ga\in\g'$ such that $\mua(\phi)<\ga$ for all $\al\in A$. 

The \emph{limit augmented valuation} of the continuous MacLane chain $\left(\mua\right)_{\al\in A}$ with respect to these data is the mapping 
$$\mu':\kx\rightarrow \g' \cup \left\{\infty\right\}
$$
assigning to any $g\in\kx$, with $\phi$-expansion $g=\sum_{0\le s}g_s\phi^s$, the value
$$
\mu'(g)=\Min\{\mu_A(g_s)+s\ga\mid 0\le s\}.
$$
We use the notation $\mu'=[\mu_A;\phi,\ga]$. Note that $\mu'(\phi)=\ga$.
\end{definition}

The following proposition collects Propositions 1.22 and 1.23 of \cite{Vaq}.

\begin{proposition}\label{extensionlim} 
\mbox{\null}
\begin{enumerate}
\item The mapping $\mu'=[\mu_A;\phi,\ga]$ is a valuation on $\kx$ extending $v$. 
\item For all $f\in\kx$, we have $\mua(f)\le\mu'(f)$ for all $\al\in A$. 

\noindent The condition $\mua(f)<\mu'(f)$, for all $\al\in A$, is equivalent to $\phi\mid_Af$ and $f\ne0$.
\end{enumerate}
\end{proposition}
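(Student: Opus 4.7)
The plan is to derive both statements by comparing $\mu'$ with the approximating valuations $\mua$. Well-definedness of $\mu'$ is immediate: each coefficient $f_s$ in the $\phi$-expansion of $f$ has $\deg(f_s)<\deg(\phi)$, hence is $A$-stable by the minimality of $\deg(\phi)$ among non-$A$-stable polynomials, so $\mu_A(f_s)$ is meaningful. The restriction $\mu'|_K=v$ then follows because constants have $\phi$-expansion equal to themselves and $\mu_A$ restricts to $v$. The ultrametric axiom I would verify termwise using $(f+g)_s=f_s+g_s$ and the inequality $\mu_A(f_s+g_s)\ge\min(\mu_A(f_s),\mu_A(g_s))$ inherited from the valuations $\mua$.

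The principal obstacle is multiplicativity $\mu'(fg)=\mu'(f)+\mu'(g)$. For fixed $f,g$, my plan is to choose $\al_0\in A$ large enough that (i) every coefficient $p$ appearing in the $\phi$-expansions of $f$, $g$ or $fg$ satisfies $\mua(p)=\mu_A(p)$ for all $\al\ge\al_0$; and (ii) $\phi$ is $\mua$-minimal for $\al\ge\al_0$, which by Proposition \ref{minimal0} follows from the fact that every polynomial of degree less than $\deg(\phi)$ is $A$-stable. Under these hypotheses, $\mua(h)=\min_s\{\mu_A(h_s)+s\mua(\phi)\}$ for $h\in\{f,g,fg\}$. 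The multiplicativity of each $\mua$ then reads as a functional identity in the parameter $\rho=\mua(\phi)$:
$$\min_j\{\mu_A(h_j)+j\rho\}=\min_s\{\mu_A(f_s)+s\rho\}+\min_t\{\mu_A(g_t)+t\rho\},$$
where $fg=\sum_j h_j\phi^j$ is the $\phi$-expansion. I would then promote this identity to $\rho=\ga$ by a Newton-polygon comparison of minimizing indices, using that in the ordered group $\rho<\ga$ implies $n\rho<n\ga$ for every positive integer $n$, so the strict inequalities isolating the leading minimizers at $\rho=\ga$ persist at $\rho=\mua(\phi)$. The delicate point is that $\ga-\mua(\phi)$ need not be Archimedean-small, so the argument must proceed combinatorially at the level of minimizer indices rather than by any continuity.

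For Part (2), given $\al\in A$, I would pick any $\beta\ge\al$ in $A$ satisfying the conditions above and estimate $\mu_\beta(f)=\min_s\{\mu_A(f_s)+s\mu_\beta(\phi)\}\le\min_s\{\mu_A(f_s)+s\ga\}=\mu'(f)$, using $\mu_\beta(\phi)<\ga$ and $s\ge 0$; the inequality $\mua(f)\le\mu_\beta(f)$ then follows from Proposition \ref{extension}(2) applied along the chain. For the equivalence: if $\phi\mid_A f$ and $f\ne0$, Lemma \ref{stableChar} says $f$ is not $A$-stable, so Lemma \ref{nonstable} yields $\mua(f)<\mu_\beta(f)\le\mu'(f)$ for any $\beta>\al$, which exists since $A$ has no maximum. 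Conversely, if $\phi\nmid_A f$, then $f$ is $A$-stable, and combining Lemma \ref{stableChar}(2) with $\mua(\phi)<\ga$ shows $\mu_A(f_s)+s\ga>\mu_A(f_0)$ for every $s>0$; hence $\mu'(f)=\mu_A(f_0)=\mua(f)$ for $\al$ sufficiently large, so strict inequality fails for some $\al$.
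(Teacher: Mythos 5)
First, a point of reference: the paper does not prove this proposition itself; it is quoted from Vaqui\'e (Propositions 1.22 and 1.23 of \cite{Vaq}), so your attempt can only be judged on its own terms. Your treatment of well-definedness, the ultrametric inequality, the restriction to $K$, and all of Part (2) is sound: the inequality $\mua(f)\le\mu_\beta(f)\le\mu'(f)$ for $\beta\ge\al$ large, and the equivalence via Lemmas \ref{nonstable}, \ref{minimalA} and \ref{stableChar}, are exactly the right ingredients. (Minor quibble: ``$\phi$ is $\mua$-minimal for $\al\ge\al_0$'' is not what Proposition \ref{minimal0} gives you from stability of low-degree polynomials; what you actually need, and what Lemma \ref{minimalA} does give, is the per-polynomial statement that $\mua(h)=\min_s\{\mu_A(h_s)+s\mua(\phi)\}$ for the finitely many $h$ under consideration and $\al$ large.)

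The genuine gap is in multiplicativity. Writing $N_h(\rho)=\min_s\{\mu_A(h_s)+s\rho\}$, your plan is to deduce $N_{fg}(\ga)=N_f(\ga)+N_g(\ga)$ from the identities $N_{fg}(\rho_\al)=N_f(\rho_\al)+N_g(\rho_\al)$ at the points $\rho_\al=\mua(\phi)<\ga$, by a combinatorial persistence of minimizers. This transfer is invalid. When $\rho$ increases from $\rho_\al$ to $\ga$, only the domination of a minimizer over \emph{larger} indices persists (if $a_{s_0}+s_0\rho_\al\le a_s+s\rho_\al$ with $s>s_0$, then $a_{s_0}-a_s\le (s-s_0)\rho_\al<(s-s_0)\ga$); domination over smaller indices can reverse, so the minimizer at $\ga$ may jump to a smaller index about which the identities at $\rho_\al$ carry no information. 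Concretely, the identities at all $\rho_\al$ only pin down the relation $c_\iota=a_\sigma+b_\tau$, $\iota=\sigma+\tau$, at the \emph{eventual} minimizers, and say nothing about the coefficients $(fg)_j$ with $j$ below $\sigma+\tau$, which are precisely the terms that could destroy the equality at $\ga$ (a toy instance: $N_f(\rho)=\rho$, $N_g=0$, $N_{fg}(\rho)=\min\{\rho,c\}$ with $\rho_\al<c<\ga$ satisfies every constraint you use, yet fails at $\ga$). What rules this out is not combinatorics but the structural carry analysis: for $a,b\in\kx_{\deg(\phi)}$ the product $ab$ is $A$-stable, and writing $ab=c+d\phi$ one has $\mu_A(ab)=\mu_A(c)<\mu_A(d)+\mua(\phi)$ by Lemma \ref{stableChar}; this strict inequality \emph{does} transfer to $\ga$ because $\ga$ sits on the larger side (this is exactly Lemma \ref{abcd}, whose proof does not depend on the proposition). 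With that in hand, one expands $fg=\sum_{s,t}f_sg_t\phi^{s+t}$, replaces each $f_sg_t$ by $c_{s,t}+d_{s,t}\phi$, and runs the usual minimal-index argument directly at $\ga$ (as in the proof of Theorem \ref{phimindeg}) to get both $\mu'(fg)\ge\mu'(f)+\mu'(g)$ and the non-cancellation of the leading term. Without this ingredient — which is where the minimality of $\deg(\phi)$ among non-$A$-stable polynomials really enters — your multiplicativity argument cannot be completed as described.
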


\begin{lemma}\label{abcd}
For $a,b\in\kx_{\deg(\phi)}$, let the $\phi$-expansion of $ab$ be
$$ab=c+d\phi,\qquad c,d\in\kx_{\deg(\phi)}.$$ Then, $ab\sim_{\mu'} c$.
\end{lemma}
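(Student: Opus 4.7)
The plan is to show $\mu'(d\phi) > \mu'(c)$, since $ab - c = d\phi$ and so this inequality is equivalent to $ab\sim_{\mu'}c$. Write $\gamma = \mu'(\phi)$, so directly from Definition \ref{muprimalim} we get $\mu'(c) = \mu_A(c)$, $\mu'(d\phi) = \mu_A(d) + \gamma$, and $\mu'(ab) = \Min\{\mu_A(c),\mu_A(d)+\gamma\}$. The case $d=0$ is trivial, so we may assume $d\ne0$.

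First I would note that, since $a,b,c,d$ all have degree strictly smaller than $\deg(\phi)$ and $\phi$ is of minimal degree among non–$A$-stable polynomials, all four polynomials are $A$-stable, with stable values $\mu_A(a),\mu_A(b),\mu_A(c),\mu_A(d)$. The product $ab$ is then $A$-stable as well, because $\mua(ab)=\mua(a)+\mua(b)$ stabilizes. Choose $\al_0\in A$ large enough that these stable values are attained at every $\al\ge\al_0$, and furthermore (as established inside the proof of Lemma \ref{stableChar}) such that $\phi$ is $\mua$-minimal for all $\al\ge\al_0$.

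For such $\al$, Proposition \ref{minimal0}(2) applied to the $\phi$-expansion $ab = c + d\phi$ gives
\[
\mu_A(ab)\;=\;\mua(ab)\;=\;\Min\{\mu_A(c),\,\mu_A(d)+\mua(\phi)\}.
\]
Here the left-hand side is constant in $\al$, while by Lemma \ref{nonstable} the quantity $\mua(\phi)$ is strictly increasing in $\al$. Consequently the minimum cannot be achieved by $\mu_A(d)+\mua(\phi)$ for all large $\al$, so after possibly enlarging $\al_0$ we obtain $\mu_A(c) < \mu_A(d)+\mua(\phi)$, and hence $\mu_A(ab)=\mu_A(c)$. This is the key inequality.

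Finally, since $\mua(\phi)<\gamma$, the strict inequality $\mu_A(c) < \mu_A(d)+\mua(\phi)$ upgrades to $\mu_A(c)<\mu_A(d)+\gamma$, i.e.\ $\mu'(c)<\mu'(d\phi)$. Therefore $\mu'(ab) = \mu'(c)$ and $\mu'(ab-c)=\mu'(d\phi)>\mu'(ab)$, which by definition means $ab\sim_{\mu'}c$. I expect no significant obstacle: the only subtle point is to justify that the $\mua$-minimality of $\phi$ holds for all sufficiently large $\al$ (so that Proposition \ref{minimal0}(2) may be invoked), but this is already contained in the machinery assembled in the section.
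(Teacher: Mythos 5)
Your proof is correct and follows essentially the same route as the paper's: the paper simply applies Lemma \ref{stableChar} (the implication (1)$\Rightarrow$(2)) to the $A$-stable product $ab$ to get $\mu_\alpha(ab)=\mu_\alpha(c)<\mu_\alpha(d\phi)$ for all large $\alpha$, and then concludes exactly as you do via $\mu_\alpha(\phi)<\gamma$. You instead re-derive that implication by hand from Proposition \ref{minimal0}(2), Lemma \ref{nonstable} and a stabilization argument, which is fine but could be replaced by a direct citation of Lemma \ref{stableChar}.
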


\begin{proof}
Since $\deg(a),\deg(b)<\deg(\phi)$, the polynomials $a$ and $b$ are $A$-stable. In particular, $ab$ is $A$-stable. 
By Lemma \ref{stableChar}, there exists $\al_0\in A$ such that
$$
\mu_A(c)=\mua(c),\quad \mbox{ and }\quad \mua(ab)=\mua(c)<\mua(d\phi), \qquad \forall\,\al\ge\al_0\ \mbox{ in }A.
$$

Hence, $\mu_A(ab)=\mu_A(c)<\mu_A(d)+\mua(\phi)<\mu_A(d)+\ga$. By the definition of $\mu'$, we conclude that $\mu'(ab)=\mu'(c)<\mu'(d\phi)$.
\end{proof}

\begin{corollary}\label{smallunitslim}
For all $a\in\kx_{\deg(\phi)}$, $H_{\mu'}(a)$ is a unit in $\gg_{\mu'}$.
\end{corollary}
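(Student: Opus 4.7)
The plan is to exploit Lemma \ref{abcd} together with a B\'ezout identity, mimicking the argument used in Lemma \ref{ab} for key polynomials. Fix a non-zero $a \in \kx_{\deg(\phi)}$; we want to produce $b \in \kx$ with $H_{\mu'}(a) H_{\mu'}(b) = 1$ in $\gg_{\mu'}$.

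First I would recall that $\phi$ is irreducible in $\kx$: this was noted right after the definition of $\phi$, since the product of $A$-stable polynomials is $A$-stable, so any proper factorization of $\phi$ would yield an $A$-stable factor of the same degree as one of its proper factors, contradicting the minimality of $\deg(\phi)$ among non-$A$-stable polynomials. Since $\deg(a) < \deg(\phi)$, the polynomial $\phi$ does not divide $a$ in $\kx$, so $a$ and $\phi$ are coprime. Thus there is a B\'ezout identity
$$
a\, b + \phi\, d = 1,
$$
and after reducing $b$ modulo $\phi$ (adjusting $d$ accordingly), we may take $\deg(b) < \deg(\phi)$, which forces $\deg(d) < \deg(a) < \deg(\phi)$, so both $b$ and $d$ lie in $\kx_{\deg(\phi)}$.

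The equation $ab = 1 + (-d)\phi$ is then precisely the $\phi$-expansion of $ab$, with $c = 1$ and the top coefficient $-d$. Lemma \ref{abcd} applies directly and yields
$$
ab \sim_{\mu'} 1.
$$
Consequently $H_{\mu'}(ab) = H_{\mu'}(1) \ne 0$, and since $H_{\mu'}$ is multiplicative on products with non-zero image, we get
$$
H_{\mu'}(a)\, H_{\mu'}(b) \;=\; H_{\mu'}(ab) \;=\; 1
$$
in $\gg_{\mu'}$, showing that $H_{\mu'}(a)$ is a unit.

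There is no real obstacle here: the only subtle point is ensuring that both coefficients of the B\'ezout identity can be chosen in $\kx_{\deg(\phi)}$, which is automatic from the degree estimate, and ensuring that the resulting relation is genuinely the $\phi$-expansion of $ab$ (so that Lemma \ref{abcd} applies verbatim). Once these are verified, the conclusion is immediate.
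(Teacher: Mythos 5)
Your argument is correct and is essentially the paper's own proof: both use the irreducibility of $\phi$ (hence coprimality with $a$), a B\'ezout identity $ab+d\phi=1$ with degree bounds forcing $b,d\in\kx_{\deg(\phi)}$, and Lemma \ref{abcd} applied to the $\phi$-expansion $ab=1-d\phi$ to get $ab\sim_{\mu'}1$. The only (harmless) extra step is your re-derivation of the irreducibility of $\phi$, which the paper had already recorded just before Lemma \ref{minimalA}.
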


\begin{proof}
Since $\phi$ is irreducible in $\kx$ and $\deg(a)<\deg(\phi)$, these two polynomials are coprime. Hence, there is a B\'ezout identity:
$$
ac+d\phi=1,\qquad \deg(c)<\deg(\phi),\ \deg(d)<\deg(a)<\deg(\phi).
$$
By Lemma \ref{abcd}, $ac\sim_{\mu'}1$.
\end{proof}

\begin{corollary}
The polynomial $\phi$ is a key polynomial for $\mu'$, of minimal degree.
\end{corollary}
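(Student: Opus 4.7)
The plan is to verify the three conditions of being a minimal-degree key polynomial: monicity (which is immediate from the choice of $\phi$), $\mu'$-minimality, $\mu'$-irreducibility, and finally to rule out key polynomials of smaller degree. The main tools are the defining formula for $\mu'$, Lemma \ref{abcd}, Corollary \ref{smallunitslim}, and Proposition \ref{minimal0}.

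For $\mu'$-minimality, I would first observe that any polynomial $a \in K[x]_{\deg(\phi)}$ is $A$-stable (by the minimal choice of $\deg(\phi)$ among non-$A$-stable polynomials), and its trivial $\phi$-expansion gives $\mu'(a) = \mu_A(a)$. Hence for any $g \in K[x]$ with $\phi$-expansion $g = \sum_{0\le s} g_s \phi^s$, the definition of $\mu'$ yields
\[
\mu'(g) = \Min\{\mu_A(g_s) + s\gamma \mid 0 \le s\} = \Min\{\mu'(g_s\phi^s) \mid 0 \le s\},
\]
which is precisely condition (2) of Proposition \ref{minimal0}. Therefore $\phi$ is $\mu'$-minimal.

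For $\mu'$-irreducibility, I would take $f, g \in K[x]$ with $\phi \nmid_{\mu'} f$ and $\phi \nmid_{\mu'} g$, and apply the characterization of non-divisibility given by condition (3) of Proposition \ref{minimal0} (available now that $\phi$ is known to be $\mu'$-minimal). Writing the $\phi$-expansions $f = f_0 + \phi q$ and $g = g_0 + \phi r$, we have $\mu'(f) = \mu'(f_0)$ and $\mu'(g) = \mu'(g_0)$. Let $f_0 g_0 = c + d\phi$ be the $\phi$-expansion with $c, d \in K[x]_{\deg(\phi)}$; then $c$ is the $0$-th coefficient of the $\phi$-expansion of $fg$. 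By Lemma \ref{abcd}, $f_0 g_0 \sim_{\mu'} c$, so
\[
\mu'(fg) = \mu'(f) + \mu'(g) = \mu'(f_0 g_0) = \mu'(c),
\]
which, by condition (3) of Proposition \ref{minimal0} applied to $fg$, gives $\phi \nmid_{\mu'} fg$. This establishes $\mu'$-irreducibility.

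Finally, minimality of the degree is immediate from Corollary \ref{smallunitslim}: any non-zero $a \in K[x]$ with $\deg(a) < \deg(\phi)$ has $H_{\mu'}(a)$ a unit in $\gg_{\mu'}$, and a key polynomial cannot have unit initial form. I expect no real obstacle in any of these steps; the only subtlety worth checking carefully is that $\mu'$ restricted to $K[x]_{\deg(\phi)}$ agrees with $\mu_A$, which is what makes the defining formula for $\mu'$ translate directly into the $\mu'$-minimality criterion.
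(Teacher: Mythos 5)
Your proof is correct and follows essentially the same route as the paper: $\mu'$-minimality via condition (2) of Proposition \ref{minimal0} read off from the defining formula for $\mu'$, $\mu'$-irreducibility by reducing to the $0$-th coefficients and invoking Lemma \ref{abcd} together with condition (3) of Proposition \ref{minimal0}, and minimality of the degree from Corollary \ref{smallunitslim}. The only difference is cosmetic: you make explicit the identity $\mu'(g_s\phi^s)=\mu_A(g_s)+s\ga$ (i.e.\ that $\mu'$ agrees with $\mu_A$ on $\kx_{\deg(\phi)}$), which the paper leaves implicit in the phrase ``by the very definition of $\mu'$''.
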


\begin{proof}
By the very definition of $\mu'$ we have $\mu'(f)=\mn\{\mu'\left(f_s\phi^s\right)\mid 0\le s\}$ for any $f=\sum_{0\le s}f_s\phi^s\in\kx$. By Proposition \ref{minimal0}, $\phi$ is $\mu'$-minimal.\e

Let us show that $\phi$ is $\mu'$-irreducible. Suppose $\phi\nmid_{\mu'}f$,  $\phi\nmid_{\mu'}g$ for $f,g\in\kx$. Let $f_0,g_0$ be the $0$-th coefficients of the $\phi$-expansion of $f$ and $g$, respectively. 

Since $\phi$ is $\mu'$-minimal, Proposition \ref{minimal0} shows that $\mu'(f)=\mu'(f_0)$ and $\mu'(g)=\mu'(g_0)$.

Consider the $\phi$-expansion 
$$
f_0g_0=c+d\phi,\qquad \deg(c),\deg(d)<\deg(\phi).
$$
By Lemma \ref{abcd}, $\mu'(f_0g_0)=\mu'(c)$, so that
$$
\mu'(fg)=\mu'(f_0g_0)=\mu'(c).
$$
Since the polynomial $c$ is the $0$-th coefficient of the $\phi$-expansion of $fg$, Proposition \ref{minimal0} shows that $\phi\nmid_{\mu'}fg$. Hence, $\phi$ is $\mu$-irreducible. 

This shows that $\phi$ is a key polynomial for $\mu'$.\e

Finally, by Corollary \ref{smallunitslim}, any polynomial in $\kx$ of degree smaller than $\deg(\phi)$ cannot be a key polynomial for $\mu'$, because it is a $\mu'$-unit.
\end{proof}

\end{document}